\documentclass[12pt]{article}
\usepackage {epsfig,amsmath,euscript}
\usepackage[latin1]{inputenc}
\usepackage[english]{babel}
\usepackage{color}
\usepackage[scr=boondoxo]{mathalfa}
\usepackage[amsmath,thmmarks,standard,thref]{ntheorem}
\usepackage{ae}
\usepackage{subfigure}

\usepackage[T1]{fontenc}
\usepackage{graphicx}
\usepackage{epstopdf}
\usepackage[font=small]{caption}
\usepackage{todonotes}
\usepackage{soul}
\DeclareGraphicsRule{.tif}{png}{.png}{`convert #1 `basename #1 .tif`.png}
\usepackage{soul}
\usepackage{color}
\sethlcolor{yellow}
\definecolor{rred}{rgb}{0.7,0.0,0.2}
\definecolor{bblue}{rgb}{0.2,0.0,0.7}
\newcommand\edNew[1]{{\color{black} #1}}%
\newcommand\edNewNew[1]{{\color{black} #1}}%
\newcommand\ed[1]{{\color{black} #1}}

\topmargin -1cm
\oddsidemargin 0cm
\textwidth 6.5in
\textheight 9in
\newcommand{\secref}[1]{Section~\ref{sec:#1}}

\newcommand{\seclab}[1]{\label{sec:#1}}
\newcommand{\eqlab}[1]{\label{eq:#1}}
\renewcommand{\eqref}[1]{(\ref{eq:#1})}
\newcommand{\eqsref}[2]{(\ref{eq:#1}) and~(\ref{eq:#2})}

\newcommand{\figref}[1]{Fig.~\ref{fig:#1}}
\newcommand{\figlab}[1]{\label{fig:#1}}
\newcommand{\propref}[1]{Proposition~\ref{proposition:#1}}
\newcommand{\proplab}[1]{\label{proposition:#1}}
\newcommand{\corref}[1]{Corollary~\ref{corollary:#1}}
\newcommand{\corlab}[1]{\label{corollary:#1}}
\newcommand{\lemmaref}[1]{Lemma~\ref{lemma:#1}}
\newcommand{\lemmalab}[1]{\label{lemma:#1}}
\newcommand{\remref}[1]{Remark~\ref{remark:#1}}
\newcommand{\remlab}[1]{\label{remark:#1}}
\newcommand{\thmref}[1]{Theorem~\ref{theorem:#1}}
\newcommand{\thmlab}[1]{\label{theorem:#1}}

\newcommand{\defnlab}[1]{\label{defn:#1}}
\newcommand{\defnref}[1]{Definition~\ref{defn:#1}}
\newcommand{\appref}[1]{Appendix~\ref{app:#1}}

\newcommand{\applab}[1]{\label{app:#1}}

\newcommand{\pws}{PWS} 
\newcommand{\pwl}{PWL} 
\newcommand{\rpwl}{RPWL} 
\newcommand{\rpwlf}{RPWL$^*$} 

\newcommand{\tpws}{\eqlab{\pws}}
\newcommand{\tpwl}{\eqlab{\pwl}}
\newcommand{\trpwl}{\eqlab{\rpwl}}
\newcommand{\trpwlf}{\eqlab{\rpwlf}}
\newcommand{\bpws}{\eqref{\pws}}
\newcommand{\bpwl}{\eqref{\pwl}}
\newcommand{\brpwl}{\eqref{\rpwl}}
\newcommand{\brpwlf}{\eqref{\rpwlf}}

\usepackage{tikz}

\DeclareGraphicsRule{.tif}{png}{.png}{`convert #1 `basename #1 .tif`.png}


%

\title{{Resolution of the piecewise smooth visible-invisible two-fold singularity in $\mathbb{R}^3$ using regularization and blowup}}

\author{K. Uldall Kristiansen and S. J. Hogan\thanks{K. Uldall Kristiansen: Department of Applied Mathematics and Computer Science, Technical University of Denmark, 2800 Kgs. Lyngby, DK. S. J. Hogan: Department of Engineering Mathematics, University of Bristol, Bristol BS8 1UB, United Kingdom.  }}
\begin{document}
\maketitle

\begin{abstract}
Two-fold singularities in a piecewise smooth (PWS) dynamical system in $\mathbb{R}^3$ have long been the subject of intensive investigation. The interest stems from the fact that trajectories which enter the two-fold are associated with forward non-uniqueness. The key questions are: How do we continue orbits forward in time? Are there orbits that are distinguished among all the candidates?

We address these questions by regularizing the PWS dynamical system for the case of the visible-invisible two-fold. Within this framework, we consider a regularization function outside the class of Sotomayor and Teixera. We then undertake a rigorous investigation, using geometric singular perturbation theory and blowup. We show that there is indeed a forward orbit $U$ that is distinguished amongst all the possible forward orbits leaving the two-fold. Working with a normal form of the visible-invisible two-fold, we show that attracting limit cycles can be obtained (due to the contraction towards $U$), upon composition with a global return mechanism. We provide some illustrative examples. 

\end{abstract}

\pagestyle{myheadings}
\thispagestyle{plain}

\section{Introduction}\seclab{intro}
A piecewise smooth (PWS) dynamical system \cite{filippov1988differential, MakarenkovLamb12} consists of a finite set of ordinary differential equations
\begin{equation}
\dot{\mathbf{x}}=X^i(\mathbf{x}), \quad \mathbf{x}\in \Sigma_i\subset \mathbb{R}^n
\eqlab{pwsdef}
\end{equation}
where the smooth vector fields $X^i$, defined on disjoint open regions $\Sigma_i$, are smoothly extendable to the closure of $\Sigma_i$. The regions $\Sigma_i$ are separated by an $(n-1)$-dimensional set $\Sigma$ called the \textit{switching manifold}, which consists of finitely many smooth manifolds intersecting transversely.  The union of $\Sigma$ and all $\Sigma_i$ covers the whole state space $D \subseteq \mathbb{R}^n$. {In this paper, we consider $n=3$.}

Such systems are found in a wealth of applications, including problems in mechanics (impact, friction, backlash, free-play, gears, rocking blocks), electronics (switches and diodes, DC/DC converters, $\Sigma-\Delta$ modulators), control engineering (sliding mode control, digital control, optimal control), oceanography (global circulation models), economics (duopolies) and biology (genetic regulatory networks): see \cite{Bernardo08, MakarenkovLamb12} for a full set of references.

Although PWS systems are abundant in applications, they pose mathematical difficulties because they do not in general define a (classical) dynamical system. In particular, \ed{uniqueness} of solutions cannot always be guaranteed. A prominent example of a PWS system with non-uniqueness is the two-fold in $\mathbb R^3$ \cite{desroches_canards_2011}. The visible-invisible two-fold is the subject of the present paper.

Previous work has attempted to resolve the forward non-uniqueness associated with the two-fold. In \cite{simpson2014a}, the visible two-fold  in $\mathbb R^2$ was considered. The ambiguity of forward evolution was removed using separate small perturbations: hysteresis, time-delay and noise. In each case, a probabilistic notion of forward evolution close to the two-fold was developed. In the limit as the perturbation tended to zero, almost all orbits or sample paths followed one of the visible tangencies. Thus the possibility of evolution through the two-fold singularity into the escaping region for a nonzero length of time could be excluded, similar to other results for non-differentiable systems in the zero-noise limit. 

In \cite{colombojeffrey2011}, all three two-folds in $\mathbb R^3$ were considered. For the invisible two-fold, the authors asserted that forward evolution through the singularity into the region of unstable sliding {\it is} possible. Then after a finite time, evolution away from the unstable sliding region leads to a return mechanism to the stable sliding region and a subsequent forward evolution through the singularity, leading to what they called ``nondeterministic chaos''. No such conclusion were made for either the visible or the visible-invisible two-fold, in line with \cite{simpson2014a}. 

Frequently, PWS systems are idealisations of smooth systems with abrupt transitions. It is therefore perhaps natural to view a PWS system as a singular limit of a smooth regularized system. In this paper, we adopt this viewpoint and describe the dynamics of a regularization of the PWS visible-invisible two-fold in $\mathbb R^3$.

\subsection{PWS systems in $\mathbb R^3$}
\seclab{prelim}

Let $\textbf x=(x,y,z)\in \mathbb R^3$ and consider an open set $\mathcal U$ and a smooth function $f=f(\textbf x)$ having $0$ as a regular value. Then $\Sigma\subset \mathcal U$ defined by $\Sigma=f^{-1}(0)$ is a smooth $2D$ manifold. The manifold $\Sigma$ is our \textit{switching manifold}. It separates the set $\Sigma_+ =\{\textbf x\in \mathcal U\vert f(\textbf x)>0\}$ from the set $\Sigma_-=\{\textbf x\in \mathcal U\vert f(\textbf x)<0\}$. We introduce local coordinates so that $f(\textbf x)=y$, $\Sigma=\{\textbf x\in \mathcal U\vert y=0\}$ and consider a PWS system on $\mathcal U\subset \mathbb R^3$ in the following form
\begin{eqnarray}
  X(\textbf x) 
  =\left\{ \begin{array}{cc}
                                   X^-(\textbf x)& \text{for}\quad  \textbf x\in \Sigma_-\\
                                   X^+(\textbf x)& \text{for}\quad \textbf x\in \Sigma_+
                                  \end{array}\right.. \eqlab{PWSsystem}
\end{eqnarray}
Here $X^+=(X_1^+,X_2^+,X_3^+)^T$ and $X^-=(X_1^-,X_2^-,X_3^-)^T$ are smooth vector-fields. Then 
%
%
%
%
%
%
%
$\Sigma$ is divided into two types of region: crossing and sliding:
\begin{itemize}
 \item $\Sigma_{cr}\subset \Sigma$ is the \textit{crossing region}, where $$(X^+ f(x,0,z))(X^-f(x,0,z)) =X_2^+(x,0,z) X_2^-(x,0,z) >0.$$
 \item $\Sigma_{sl}\subset \Sigma$ is the \textit{sliding region}, where $$(X^+ f(x,0,z))(X^-f(x,0,z)) =X_2^+(x,0,z) X_2^-(x,0,z)<0.$$
\end{itemize}
Here $X^{\pm} f=\nabla f\cdot X^{\pm}$ denotes the Lie-derivative of $f$ along $X^{\pm}$. Since $f(\textbf x)=y$ in our coordinates we have simply that $X^{\pm} f =X_2^{\pm}$. On $\Sigma_{sl}$ we follow the Filippov convention \cite{filippov1988differential} and define the sliding vector-field $X_{sl}(\textbf x)$ as the convex combination of $X^+$ and $X^-$
\begin{eqnarray}
 X_{sl}(\textbf x) = \sigma X^+(\textbf x)+ (1-\sigma) X^-(\textbf x),\eqlab{XSliding}
\end{eqnarray}
where $\sigma = \sigma (\textbf x) \in (0,1)$ is defined so that $X_{sl}(\textbf x)$ is tangent to $\Sigma_{sl}$. Hence
\begin{eqnarray*}
 \sigma(\textbf x) = \frac{X^-f(x,0,z)}{X^-f(x,0,z)-X^+f(x,0,z)}.\eqlab{lambdaSliding}
\end{eqnarray*}

An orbit of a PWS system can be made up of a concatenation (respecting the direction of time) of orbit segments of $X_{sl}, X^\pm$ on $\Sigma$ and $\Sigma_{\pm}$, respectively. The boundaries of $\Sigma_{sl}$ and $\Sigma_{cr}$ where $X^+ f = X_2^+=0$ or $X^-f = X_2^-=0$ are {\it singularities}, sometimes called \textit{tangencies}. We define two different generic tangencies: the {\it fold} singularity and the {\it two-fold} singularity. 
\begin{definition}
A point $q\in \Sigma$ is a {\it fold} singularity if 
\begin{eqnarray}
X^{+} f(q)=0,\quad X^+(X^+f)(q)\ne 0,\quad \text{and}\quad X^-f(q)\ne 0,\eqlab{foldPlus}
\end{eqnarray}
or 
\begin{eqnarray}
X^{-} f(q)=0,\quad X^-(X^-f)(q)\ne 0,\quad \text{and}\quad X^+f(q)\ne 0.\eqlab{foldNegative}
\end{eqnarray}
A point $q\in \Sigma$ is a {\it two-fold} singularity if both $X^{+}f(q)=0$ and $X^-f(q)=0$, as well as $X^+(X^+f)(q)\ne 0$ and $X^-(X^-f)(q)\ne 0$ and if 
the vectors $X^+(q)$ and $X^-(q)$ are not parallel. 
\end{definition}
Then we have
\begin{proposition}
\cite[Proposition 2.2]{krihog}
A two-fold singularity $q$ is the transversal intersection of two smooth curves $l^+$ and $l^-$ (abbreviated fold lines in the following) of fold singularities satisfying \eqref{foldPlus} and \eqref{foldNegative}, respectively. 
\end{proposition}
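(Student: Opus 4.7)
The plan is to apply the implicit function theorem to the defining equations $X^\pm f=0$ on $\Sigma$, and then to deduce transversal intersection at $q$ from the two-fold nondegeneracies.

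Working in the adapted coordinates of the previous paragraphs, $f(\textbf x)=y$, so $X^\pm f=X_2^\pm$. Expanding
\begin{equation*}
X^+(X^+f)(q)=X_1^+(q)\,\partial_x X_2^+(q)+X_3^+(q)\,\partial_z X_2^+(q)\ne 0
\end{equation*}
(the $\partial_y$ term drops out because $X_2^+(q)=0$) shows in particular that $\nabla_{(x,z)}X_2^+(q)\ne 0$. The implicit function theorem therefore produces a smooth curve $l^+\subset\Sigma$ through $q$ on which $X_2^+=0$, and the symmetric argument using $X^-(X^-f)(q)\ne 0$ yields $l^-$. Along $l^+\setminus\{q\}$ near $q$, the first two conditions of \eqref{foldPlus} persist by continuity, and the third, $X^-f\ne 0$, holds as soon as $l^+\cap l^-=\{q\}$ locally, which in turn follows from transversality at $q$.

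To obtain transversality, I would identify the tangent lines as $T_q l^\pm=\ker dX_2^\pm|_{T_q\Sigma}$. Since $X_2^\pm(q)=0$ forces $X^\pm(q)\in T_q\Sigma$, the fold nondegeneracy $dX_2^\pm(X^\pm(q))\ne 0$ says that $X^\pm(q)$ is not tangent to $l^\pm$ at $q$. Combined with the hypothesis that $X^+(q)$ and $X^-(q)$ are linearly independent, and hence a basis of $T_q\Sigma$, this is the input from which the decomposition $T_q l^+\oplus T_q l^-=T_q\Sigma$ is to be derived.

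The main obstacle is precisely this last derivation. Writing $v^\pm\in T_q l^\pm$ in the basis $\{X^+(q),X^-(q)\}$ and using $dX_2^\pm(v^\pm)=0$, one sees that the transversality of $l^+$ and $l^-$ at $q$ is equivalent to
\begin{equation*}
dX_2^+(X^+(q))\,dX_2^-(X^-(q))\ne dX_2^+(X^-(q))\,dX_2^-(X^+(q)),
\end{equation*}
which does not obviously follow from the three listed hypotheses alone. I would therefore consult the formulation in \cite{krihog}, where this additional nondegeneracy is presumably either built into the definition of a two-fold used there, as is common in the PWS literature, or else obtained from a complementary generic assumption on $X^\pm$ in a neighbourhood of $q$.
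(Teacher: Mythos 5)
Your construction of $l^{\pm}$ via the implicit function theorem is exactly the route behind this proposition (the paper itself offers no proof, only the citation to \cite{krihog}, and the suppressed argument there is the same IFT construction in coordinates adapted to $X^{\pm}(q)$). The obstacle you isolate at the end is genuine, and your determinant criterion is correct: writing $T_ql^{\pm}=\ker d(X_2^{\pm}|_{\Sigma})(q)$ and expanding in the basis $\{X^+(q),X^-(q)\}$ of $T_q\Sigma$, transversality is equivalent to
\begin{align*}
X^+(X^+f)(q)\,X^-(X^-f)(q)\;\ne\;X^-(X^+f)(q)\,X^+(X^-f)(q),
\end{align*}
and this does \emph{not} follow from the three listed conditions. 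For instance $X^+=(1,z,1)$, $X^-=(0,z,1)$ satisfies $X^{\pm}f(0)=0$, $X^{\pm}(X^{\pm}f)(0)=1\ne 0$, and $X^+(0)\not\parallel X^-(0)$, yet $l^+=l^-=\{y=z=0\}$, so the ``two lines of fold singularities'' do not even exist as such (the third condition in \eqref{foldPlus} fails all along $l^+$). The transversality must therefore be read as part of the definition of a generic two-fold, or as an additional nondegeneracy assumption, exactly as you suspect. For the normal form \eqref{XNPositiveT} actually used in the rest of the paper the condition reduces to $b\beta+c\gamma\ne 0$, which is secured by assumption (A): squaring $c+\gamma>\sqrt{(c-\gamma)^2-4b\beta}$ gives $4(c\gamma+b\beta)>0$. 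So your deferral to the cited reference at the transversality step is the appropriate move and is no weaker than what the paper itself provides.
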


For the fold singularity, we distinguish between the \textit{visible} and \textit{invisible} cases.
\begin{definition}\cite[Definition 2.1]{jeffrey_geometry_2011}
A fold singularity $q$ with $X^{+} f(q)=0$ or $X^{-} f(q)=0$ is {\it visible} if
 \begin{eqnarray*}
\mbox{{$X^{+ }(X^{+ } f)(q) >0\quad \text{or} \quad X^{-}(X^{- } f)(q) <0,\quad \text{respectively}$}},
 \end{eqnarray*}
and {\it invisible} if
\begin{eqnarray*}
\mbox{{$X^{+ }(X^{+ } f)(q) <0\quad \text{or}\quad X^{-}(X^{- } f)(q) >0,\quad \text{respectively}.$}}
\end{eqnarray*}
\end{definition}
 Similarly:
\begin{definition} \cite[Definition 2.3]{jeffrey_geometry_2011}
 The two-fold singularity $q$ is 
 \begin{itemize}
  \item {\it visible} if the fold lines $l^+$ and $l^-$ are both visible;
  \item {\it visible-invisible} if $l^+$ ($l^-$) is visible and $l^-$ ($l^+)$ is invisible;
  \item {\it invisible} if $l^+$ and $l^-$ are both invisible. 
 \end{itemize}
\end{definition}

\subsection{The visible-invisible two-fold}
From the general expressions in \cite[Proposition 3.1]{krihog}, the visible-invisible two-fold can be locally
described by the following set\footnote{In this paper, we set $\beta\mapsto - \beta$, in contrast to \cite{krihog}.} of \textit{normalized equations}:
\begin{align}
 \dot x &=\left\{\begin{array}{cc} 
                   \beta^{-1} c+\mathcal O(\vert x\vert +\vert y\vert+\vert z\vert) &\text{for}\quad y>0\\
                  -1+\mathcal O(\vert x\vert +\vert y\vert+\vert z\vert) & \text{for}\quad y<0
                  \end{array}\right.,\tpws\\
 \dot y&=\left\{\begin{array}{cc}  b z+\mathcal O(\vert y\vert +(\vert y\vert +\vert z\vert)(\vert x\vert+\vert y\vert +\vert z\vert))&\text{for}\quad y>0 \\
                  -\beta x+\mathcal O(\vert y\vert +(\vert x\vert+\vert y\vert)(\vert x\vert+\vert y\vert+\vert z\vert)) &\text{for}\quad y<0
                 \end{array}\right.,\nonumber\\
 \dot z&=\left\{\begin{array}{cc} 1+\mathcal O(\vert x\vert +\vert y\vert+\vert z\vert) &\text{for}\quad y>0\\
                  b^{-1}\gamma+\mathcal O(\vert x\vert +\vert y\vert+\vert z\vert)&\text{for}\quad y<0                  
                 \end{array}\right.,\nonumber
\end{align}
where
 $(x,y,z)\in \mathcal U$ is a small neighborhood of the two-fold $q=(0,0,0)$.
 In \bpws{}, the fold lines
\begin{align}
l^+:&\quad y=z=0,\eqlab{foldLines1}\\
l^-:&\quad x=y=0,\nonumber
\end{align}
coincide with the $x$- and $z$-axes respectively, as shown in \figref{VISliding}.  The real numbers $b,c,\gamma$ and $\beta$ are parameters describing the two-fold. 
From \bpws{}, we have 
\begin{align*}
 X^+(X^+f)(x,0,0)& = b+\mathcal O(\vert x\vert ),\\
 X^-(X^-f)(0,0,z) &= \beta +\mathcal O(\vert z\vert),
\end{align*}
within $\mathcal U$. $l^{\pm}$ are therefore lines of visible and invisible folds, respectively, whenever
%
\begin{align}
 \beta>0,\,b>0,\eqlab{signBetaB}
\end{align}
 which we assume henceforth.  
The constants $c$ and $\gamma$ relate to the sliding flow, see our assumption (A), discussed below. Full details of the derivation of \bpws{} can be found in \cite{krihog}.
In \bpws{} we also have that
\begin{eqnarray}
 \Sigma_{sl}:&\,y=0,xz>0,\eqlab{SigmaSliding}\\
 \Sigma_{cr}:&\,y=0,xz<0 ,\nonumber
\end{eqnarray}
where
$\Sigma_{sl} = \Sigma_{sl}^-\cup \Sigma_{sl}^+$ with
$$\Sigma_{sl}^-:\,y=0,x<0,z<0,\quad \mbox{\text{(3rd quadrant of the $(x,z)$-plane)}}$$ and $$\Sigma_{sl}^+:\,y=0,x>0,z>0,\quad \mbox{\text{(1st quadrant of the $(x,z)$-plane)}}$$ are stable and unstable sliding  regions, respectively. Similarly, $\Sigma_{cr} = \Sigma_{cr}^-\cup \Sigma_{cr}^+$ where $$\Sigma_{cr}^-:\,y=0,x>0,z<0,\quad \mbox{\text{(2nd quadrant of the $(x,z)$-plane)}}$$ and $$\Sigma_{cr}^+:\,y=0,x<0,z>0,\quad \mbox{\text{(4th quadrant of the $(x,z)$-plane)}}$$ are regions with crossing downwards and upwards, respectively (see \figref{VISliding}). For later convenience, we define $\breve \Sigma_{sl}$ as the projection of $\Sigma_{sl}$ onto the $xz$-plane. In other words,
\begin{align*}
 (x,z)\in \breve \Sigma_{sl} \Leftrightarrow (x,0,z)\in \Sigma_{sl}.
\end{align*}
We define $\breve \Sigma$ and $\breve \Sigma_{sl}^\pm$ similarly.

We compute the sliding vector-field $X_{sl}$ in \eqref{XSliding} within $\Sigma_{sl}$, using \bpws{}, to give
\begin{eqnarray}
 \dot x&=&\sigma X_1^+(x,0,z)+(1-\sigma )X_1^-(x,0,z),\eqlab{slidingEqns}\\
 \dot y&=&0,\nonumber\\
 \dot z&=&\sigma X_3^+(x,0,z)+(1-\sigma )X_3^-(x,0,z),\nonumber
\end{eqnarray}
where
\begin{eqnarray}
 \sigma(x,z) &=& \frac{X^-f(x,0,z)}{X^-f(x,0,z)-X^+f(x,0,z)}=\frac{-\beta x + \mathcal O(\vert x\vert (\vert x\vert+\vert z\vert))}{-\beta x-b z+\mathcal O((\vert x\vert+\vert z\vert)^2)}.\eqlab{lambda}
\end{eqnarray}
The denominator of $\sigma$ vanishes only at the two-fold within $\overline{\Sigma}_{sl}$. System \eqref{slidingEqns} is therefore singular at $q$. But we can re-parameterize time by multiplying the sliding vector field by $\vert X^-f(x,0,z)-X^+f(x,0,z)\vert$ to obtain the following \textit{desingularized} sliding equations within $\breve \Sigma_{sl}^-$:
  \begin{eqnarray}
  \dot x&=&- c x+ b z+\mathcal O((\vert x\vert+\vert z\vert)^2)\eqlab{slidingVectorField}\\
  \dot z&=&- \beta x-\gamma z+\mathcal O((\vert x\vert+\vert z\vert)^2).\nonumber
 \end{eqnarray}
 See also \cite{krihog,filippov1988differential}. With respect to the new time in \eqref{slidingVectorField}, $(x,z)=0$ is now an equilibrium.
  Furthermore, the orbits of \eqref{slidingVectorField} coincide with the orbits of \eqref{slidingEqns} within $\Sigma_{sl}^+$; one only has to reverse the direction of time for them to agree as trajectories.  
  \ed{This process of \textit{desingularization}, through time-parametrisation, will be used in different versions throughout the manuscript. }

\begin{figure}[h!] 
\begin{center}
{\includegraphics[width=.95\textwidth]{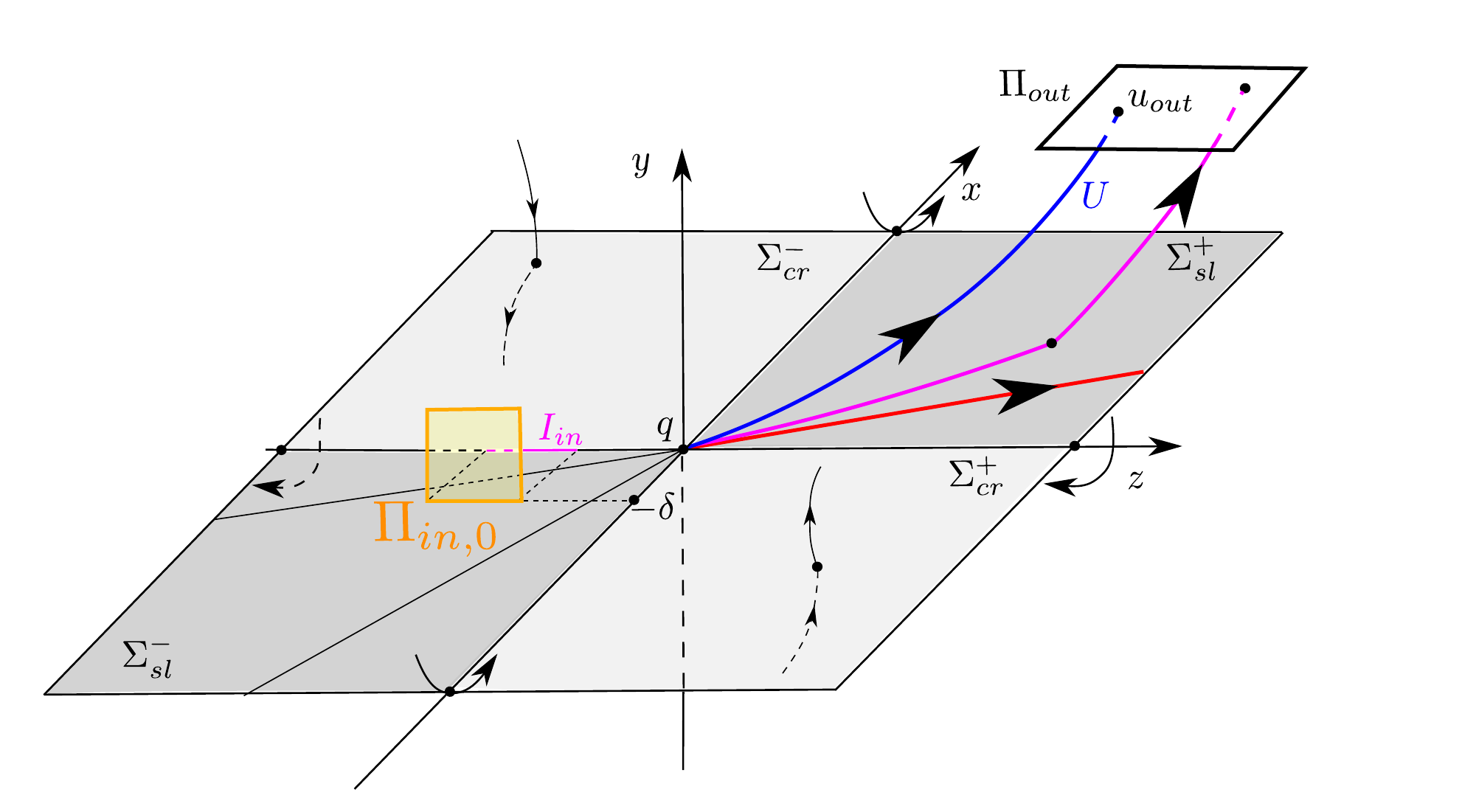}}
\end{center}
 \caption{ \ed{Local geometry of the PWS visible-invisible two-fold at $q=(0,0,0)$. The system is forward non-unique at $q$. We illustrate three possible forward orbits from $q$, shown in purple, red and blue. The purple orbit stays in $\Sigma^+_{sl}$ for a finite time, before escaping into $\Sigma_+=\{y>0\}$. The red orbit remains in $\Sigma^+_{sl}$. The blue orbit, denoted by $U$, does not enter $\Sigma^+_{sl}$, but follows $X^+$ instead. The visible tangencies of $X^+$ with $l^+$ lie in $\Sigma_+$. The invisible tangencies of $X^-$ with $l^-$ lie in $\Sigma_-$. The sections $\Pi_{\text{in},0}$ and $\Pi_{\text{out}}$ are defined in \eqsref{Pin}{Pout}, respectively. $\Pi_{\text{in},0}$ is within $y\ge 0$. The point $u_\text{out} = U\cap \Pi_{\text{out}}$.}}
\figlab{VISliding}
\end{figure}

The eigenvalues $\lambda_{\pm}$ of the linearization of \eqref{slidingVectorField} about $(x,z)=0$ are:
 \begin{align}
  \lambda_{\pm} = -\frac12 (c+\gamma)\pm \frac12 \sqrt{(c-\gamma)^2- 4b\beta}.\eqlab{lambdapm}
 \end{align}
with associated eigenvectors
\begin{align}
\breve v_{\pm} = \begin{pmatrix}1 \\
 -\chi_{\pm}
\end{pmatrix}, \eqlab{vpm}
\end{align}
respectively,
 where
\begin{align}
\chi_{\pm} =- \frac{1}{2b}\left({c-\gamma}\pm \sqrt{(c-\gamma)^2-4b\beta}\right),\eqlab{chipm}
\end{align}     
In the following, let $v_\pm\in \Sigma$ be $\breve v_\pm \in \breve \Sigma$ with an added $0$ $y$-component:
\begin{align}
 v_\pm = \begin{pmatrix}
          1\\
          0\\
          -\chi_\pm
         \end{pmatrix}.\eqlab{vpmNew}
\end{align}

In this paper, we will make the following assumptions, denoted by (A) and (B).
%
\begin{itemize}
 \item[(A)] The two-fold $q$ is a stable node of \eqref{slidingVectorField} with
 \begin{align}
 \lambda_-<\lambda_+&<0.\eqlab{test}
\end{align}
Moreover, we suppose that 
\begin{align}
\chi_+<\chi_{-}&<0,\eqlab{chiInEq}
\end{align}
so that the \textit{weak eigenspace} $\text{span}\,v_+$ and the \text{strong eigenspace} $\text{span}\,v_{-}$, see \eqref{vpmNew}, are both contained within ${\Sigma}_{sl}\cup\{q\}$. In technical terms, these assumptions imply the following inequalities:
 \begin{align}
c\pm \gamma > \sqrt{(c-\gamma)^2-4b\beta},\, (c-\gamma)^2-4b\beta>0.\eqlab{cgamma}
\end{align}
\item[(B)] The non-degeneracy condition holds:
\begin{align}
  \xi\equiv \lambda_+^{-1}\lambda_- \notin \mathbb N.\eqlab{nonResonance2}
\end{align}
\end{itemize}
By assumption (A), in particular \eqref{cgamma}, we have $c>0$ and hence $\dot x>0$ within $\Sigma_+=\{y>0\}$ from \bpws{}. 
Therefore we have the following proposition (see \figref{VICanards} and \cite[Proposition 4.2]{krihog}):
\begin{proposition}\proplab{pwsCanards}
 Suppose assumption (A) holds. Then for the Filippov system \bpws{}:
 \begin{itemize}
  \item[(a)] There exists a unique orbit of \eqref{slidingEqns}: \textnormal{the strong canard} $\gamma^s$, that is tangent to the strong eigenvector $v_-$ at the two-fold $q$. It connects $\Sigma_{sl}^+$ with $\Sigma_{sl}^-$ in finite time.
  \item[(b)] There exists a \textnormal{funnel}  within $\Sigma_{sl}^-$, confined by the strong canard and the invisible fold line $l^-$, consisting of \textnormal{weak canards} $\gamma^w$: orbits of \eqref{slidingEqns} that all pass through the two-fold tangent to the weak eigenvector $v_+$ at the two-fold $q$. All the weak canards connect $\Sigma_{sl}^+$ with $\Sigma_{sl}^-$ in finite time.
 \end{itemize}
\end{proposition}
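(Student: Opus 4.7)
The plan is to reduce the proposition to the standard phase portrait of a linearly stable node for the desingularized sliding equation \eqref{slidingVectorField}, and then to transport the resulting picture back to forward trajectories of \eqref{slidingEqns} via the orientation-preserving (in $\Sigma_{sl}^-$) and orientation-reversing (in $\Sigma_{sl}^+$) time reparameterization already introduced in the excerpt.

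First I would compute the Jacobian of \eqref{slidingVectorField} at $q=0$,
$$A=\begin{pmatrix}-c&b\\-\beta&-\gamma\end{pmatrix},$$
whose eigenpairs are precisely $(\lambda_\pm,v_\pm)$ from \eqsref{lambdapm}{vpm}. Assumption (A) yields \eqref{test} and \eqref{chiInEq}, so the eigenvalues are real, distinct and strictly negative, and the lines spanned by $v_\pm$ are contained in $\overline{\Sigma}_{sl}^-\cup\overline{\Sigma}_{sl}^+$ (the first and third quadrants of the $(x,z)$-plane together with $q$). Hence $q$ is a hyperbolic stable node of the desingularized flow; the stable manifold theorem provides a unique one-dimensional $C^\infty$ strong stable manifold $W^{ss}$ tangent to $v_-$ at $q$, and the classical analysis of such a node (e.g.\ via an invariant-cone argument around $v_+$) shows that every orbit of \eqref{slidingVectorField} in a small neighbourhood of $q$ which is not on $W^{ss}$ approaches $q$ tangent to $v_+$.

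For part (a), the two local branches of $W^{ss}$ lie respectively in $\Sigma_{sl}^-$ and in $\Sigma_{sl}^+$, and each is a forward orbit of \eqref{slidingVectorField} flowing into $q$. Under the time rescaling, the $\Sigma_{sl}^-$-branch becomes a forward orbit of \eqref{slidingEqns} terminating at $q$, while the $\Sigma_{sl}^+$-branch — on which the orientation is reversed — becomes a forward orbit of \eqref{slidingEqns} emanating from $q$. Their concatenation at $q$ is the desired strong canard $\gamma^s$, which connects $\Sigma_{sl}^-$ with $\Sigma_{sl}^+$ and is tangent to $v_-$ at $q$; uniqueness follows from the uniqueness of $W^{ss}$.

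For part (b), I would examine the desingularized flow along the boundary of $\Sigma_{sl}^-$: on $l^-$ one has $\dot x=bz<0$, so the flow enters $\Sigma_{sl}^-$, while on $l^+$ one has $\dot z=-\beta x>0$, so the flow exits. Combined with the invariance of $W^{ss}$ and the tangency-to-$v_+$ property of the node, this identifies the set of forward orbits in $\Sigma_{sl}^-$ which reach $q$ (without first crossing $l^+$) as precisely the open wedge bounded by the $\Sigma_{sl}^-$-branch of $W^{ss}$ and by $l^-$ — the funnel — and shows that every orbit in this wedge is tangent to $v_+$ at $q$. Repeating the same analysis in $\Sigma_{sl}^+$, where the desingularization reverses time so that $q$ is an unstable node of the sliding flow, yields a mirror family of forward sliding orbits leaving $q$ tangent to $v_+$, so each weak canard in $\Sigma_{sl}^-$ can be continued through $q$ into $\Sigma_{sl}^+$ and thereby connects the stable and unstable sliding regions. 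The main technical content is the persistence of the linear node picture in the nonlinear system — uniqueness of $W^{ss}$ via the stable manifold theorem, and tangent-to-$v_+$ behaviour of the remaining orbits via standard invariant-cone estimates — after which everything else reduces to the elementary boundary transversality computations along $l^\pm$ just described.
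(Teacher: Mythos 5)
Your proposal is correct and takes essentially the route the paper intends: the paper gives no in-text proof (it defers to \cite[Proposition 4.2]{krihog}), but the surrounding discussion sets up exactly your argument — $q$ as a hyperbolic stable node of the desingularized field \eqref{slidingVectorField} with eigendata \eqref{lambdapm}--\eqref{chiInEq} placing $v_\pm$ inside $\Sigma_{sl}\cup\{q\}$, followed by transport back to \eqref{slidingEqns} using that the time reparameterization preserves orientation in $\Sigma_{sl}^-$ and reverses it in $\Sigma_{sl}^+$. Your additional boundary transversality checks along $l^\pm$ and the invariant-cone/node tangency argument correctly supply the details that identify the funnel as the wedge between $\gamma^s$ and $l^-$.
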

\begin{remark}\remlab{canardsDefn}
 \edNew{The orbits in (a) and (b) are called canards since they are reminiscent of the canards in singular perturbed systems that connect attracting and repelling slow manifolds \cite{szmolyan_canards_2001}.}
\end{remark}

The forward non-uniqueness of solutions of the Filippov system \bpws{} at the two-fold $q$ is now apparent. We illustrate three possible forward orbits in \figref{VISliding}. One of these orbits, $U$, in blue in \figref{VISliding}, will be important in the following. It is the forward orbit of $q$ under the forward flow of $X^+$. 

The parameter $\xi$ in (B) is intrinsic to the PWS system under consideration. But its role in understanding the forward non-uniqueness at $q$ will only become clear when we regularize the PWS system (see \secref{intuition} and \lemmaref{twist} of \secref{sec:slow}).

To proceed, we consider the truncated, piecewise linear (PWL henceforth) versions of \bpws{}:
\begin{align}
 \dot x &=\left\{\begin{array}{cc} 
                   \beta^{-1} c &\text{for}\quad y>0\\
                  -1& \text{for}\quad y<0
                  \end{array}\right.,\tpwl\\
 \dot y&=\left\{\begin{array}{cc}  b z&\text{for}\quad y>0 \\
                  -\beta x&\text{for}\quad y<0
                 \end{array}\right.,\nonumber\\
 \dot z&=\left\{\begin{array}{cc} 1 &\text{for}\quad y>0\\
                  b^{-1}\gamma &\text{for}\quad y<0                  
                 \end{array}\right.,\nonumber
\end{align}
This simplification allows us to make some precise statements in the sequel, which would otherwise have to be prefaced by a further assumption. We discuss this at length in \secref{discuss}. For \bpwl{}, we take  $\mathcal U=\mathbb R^3$, although we still think of our system as a local one. By solving \bpwl{}$_{y>0}$ with $(x(0),y(0),z(0))=q=(0,0,0)$, we obtain the following explicit expression for $U$:
\begin{align}
 U = \{(x,y,z)\in \mathcal U\vert x= \beta^{-1} cz,\,y=\frac{1}{2}bz^2,\,z\ge 0\},\eqlab{Uexpr}
\end{align}
after elimination of time. 
The sliding dynamics of \bpwl{} are shown in \figref{VICanards}. The funnel mentioned in \propref{pwsCanards} is shaded dark grey. \ed{Notice that the PWL system preserves the essential properties of \bpws{}: existence of stable and unstable sliding regions $\Sigma_{sl}^\mp$, weak canards, strong canard, and folds. In fact, the strong canard for \bpwl{} now coincides with the span of $v_-$:
\begin{align}
 \gamma^s=\{(x,y,z) = v_-s\vert s\in \mathbb R\}.\eqlab{gammas}
\end{align}
Similarly, there is also a special (``geometrically'' unique) weak canard that coincides with the span of $v_+$:
\begin{align}
 \gamma^w=\{(x,y,z) = v_+s\vert s\in \mathbb R\}.\eqlab{gammaw}
\end{align}}

\begin{figure}[h!] 
\begin{center}
{\includegraphics[width=.59\textwidth]{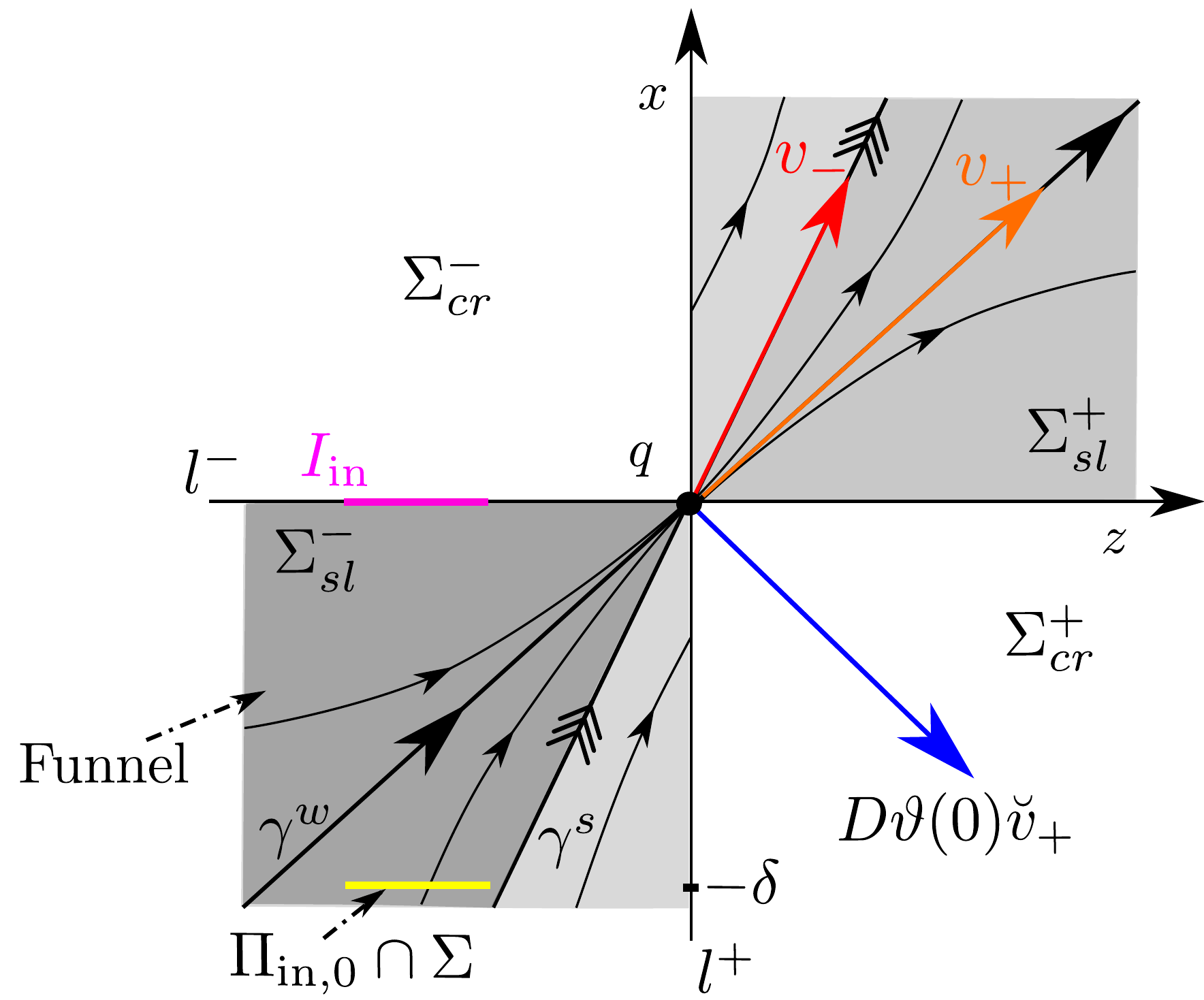}}
\end{center}
 \caption{The sliding flow within $\Sigma_{sl}^\pm$ for the truncated, piecewise linear equations \bpwl{}. The eigenvectors $v_{\mp}$ \eqref{vpmNew} are shown in red and orange, respectively. The funnel mentioned in \propref{pwsCanards} is shaded dark grey. It is confined within $\Sigma_{sl}^-$ by the invisible fold line $l^-$ and the strong canard $\gamma^s$. $I_{\text{in}}$ in purple is a closed interval on the negative $z$-axis (with properties described in \remref{xdotNeg}). The mapping $\vartheta$ is defined in \lemmaref{sigma0}. The image $D\vartheta(0)\breve v_+$ of $\breve v_+$ under $D\vartheta(0)$, shown in blue, is always contained outside the funnel. }
\figlab{VICanards}
\end{figure}

\subsection{Regularization of PWS systems}\seclab{regularization}
In this paper, we consider the following regularization of PWS systems.
\begin{definition}
A regularization of the PWS system $X^\pm(\textbf x)$ in \eqref{PWSsystem} is a smooth vector-field
\begin{eqnarray}
 X_\epsilon ({\bf x}) =
 \frac12 X^+({\bf x})(1+\phi(\epsilon^{-1} y)) +\frac12 X^-({\bf x})(1-\phi(\epsilon^{-1}y)),\eqlab{Xeps}
\end{eqnarray}
for $0<\epsilon\ll 1$, where the {\it regularization function} $\phi:\mathbb R\rightarrow [-1,1]$ is assumed to be sufficiently smooth and monotone, $\phi'(s) >0$ for $\phi(s)\in (-1,1)$ and
\begin{align*}
 \phi(s)\rightarrow \pm 1\quad \text{for}\quad s \rightarrow \pm \infty.
\end{align*}
\end{definition}
Then for $y>0$, by \eqref{Xeps}, we have  the convergence 
\begin{align*}
 X_\epsilon ({\bf x}) \rightarrow \frac12 X^+({\bf x})(1+1) +\frac12 X^-({\bf x})(1-1) =X^+({\bf x})  \quad \text{for}\quad \epsilon \rightarrow 0^+.
\end{align*}
Similarly, for $y<0$
\begin{align*}
 X_\epsilon ({\bf x}) \rightarrow X^-({\bf x})  \quad \text{for}\quad \epsilon \rightarrow 0^+.
\end{align*}

Using the truncated, piecewise linear system \bpwl{}, the regularized system becomes, after replacing time $t$ by $2t$:
\begin{align}
\dot x &=\beta^{-1} c(1+\phi(\epsilon^{-1} y))-(1-\phi(\epsilon^{-1} y)),\trpwl\\
\dot y&=b {z} (1+\phi(\epsilon^{-1} y)) -\beta  x(1-\phi(\epsilon^{-1} y)),\nonumber\\
 \dot z&=(1+\phi(\epsilon^{-1} y))+b^{-1} \gamma(1-\phi(\epsilon^{-1} y)),\nonumber
\end{align}
or more compactly $\dot{\textbf{x}}=2X_\epsilon(\textbf x)$. Clearly, it is \textit{time-reversible} with respect to the symmetry 
\begin{align}
(t,x,y,z)\mapsto (-t,-x, y,-z).\eqlab{timeRev}
\end{align}
\edNewNew{Define the smooth function
\begin{align*}
 h:\,(0,\infty)\rightarrow \mathbb R, 
 \end{align*}
by
 \begin{align}
  h(s) &=\phi^{-1}\left(\frac{1-\beta ^{-1}{b}s}{1+\beta ^{-1} b s}\right).
  \eqlab{haFunction}
 \end{align}
Then we can write the $y$-nullcline of \brpwl{} as follows
 \begin{align*}
  y = \epsilon h(x^{-1}z),\,\quad \text{for}\quad (x,z)\in \breve \Sigma_{sl}.
 \end{align*}
 Notice that $h(s)\rightarrow \pm \infty$ for $s\rightarrow 0^+$ and $s\rightarrow \infty$, respectively. }


System \brpwl{} is singular at $y=\epsilon=0$. Therefore it will be useful to work with two separate time scales. We say that $t$ in \brpwl{} is the \textit{slow time} whereas $\tau = t\epsilon^{-1}$ is the \textit{fast time}. In terms of the fast time scale we obtain the following equations
\begin{align}
x' &=\epsilon\left(\beta^{-1} c(1+\phi(\epsilon^{-1} y))-(1-\phi(\epsilon^{-1} y))\right),\trpwlf\\
y'&=\epsilon \left(b {z} (1+\phi(\epsilon^{-1} y)) -\beta  x(1-\phi(\epsilon^{-1} y))\right),\nonumber\\
z'&=\epsilon\left((1+\phi(\epsilon^{-1} y))+b^{-1} \gamma(1-\phi(\epsilon^{-1} y))\right),\nonumber
\end{align}
or simply $\textbf{x}'=2\epsilon X_\epsilon(\textbf x)$, where $'$ denotes $\frac{d}{d\tau}$. 
For $\epsilon=0$ in \brpwlf{} we have the trivial dynamics 
\begin{align}
\textbf{x}'=0 \Longrightarrow \mbox{all points $\textbf{x}\in \mathbb R^3$ are equilibria.}\eqlab{ppp}
\end{align}
\subsection{Regularization functions}\seclab{functions}
In our previous work \cite{krihog,krihog2} we restricted attention to the {\em Sotomayor and Teixeira} \cite{Sotomayor96} class of regularization functions:
\begin{definition}\defnlab{STphi}
The $C^k$-smooth, $k\ge 1$, Sotomayor and Teixeira regularization functions \cite{Sotomayor96} $$\phi:\,\mathbb R\rightarrow \mathbb R,$$ satisfy:
\begin{itemize}
 \item[$1^\circ$] \textnormal{Finite deformation}: \begin{eqnarray}
 \phi(\hat y)=\left\{\begin{array}{cc}
                 1 & \text{for}\quad \hat y\ge 1,\\
                 \in (-1,1)& \text{for}\quad \hat y\in (-1,1),\\
                 -1 & \text{for}\quad \hat y\le -1,\\
                \end{array}\right.\eqlab{phiFunc}
 \end{eqnarray}
\item[$2^\circ$] \textnormal{Monotonicity}: \begin{align}
 \phi'(\hat y)>0 \quad \text{within}\quad \hat y\in (-1,1).\eqlab{phiProperties}
 \end{align}
\end{itemize}
\end{definition}
The desirable property of this class of regularization functions is the finite deformation property. By $1^\circ$
 \begin{align}
  X_\epsilon = X^\pm \quad \text{for}\quad y\gtrless \pm \epsilon.\eqlab{XepsProp}
 \end{align}
 
In this paper, we consider the analytic, non-Sotomayor and Teixeira regularization function  
 \begin{align}
  \phi(s) = \frac{2}{\pi}\text{arctan}(s),\quad s\in \mathbb R\eqlab{arctan}
 \end{align}
because we wish to extend the theory of regularizations of PWS systems. Our results generalise to non-Sotomayor and Teixeira regularization functions other than \eqref{arctan}, but the detailed description depends on asymptotic properties of the regularization functions at $\pm \infty$. See \secref{discuss} for further details.

In the rest of this paper, we set $\phi$ as in \eqref{arctan}. It has the property that
\begin{align}
 \phi(s) = \pm 1 - \frac{2}{\pi}s^{-1} (1+\phi_2(s^{-2})),\eqlab{phiasympt}
\end{align}
using \eqref{hatEpshatY}, where we introduce the smooth function $\phi_2:[0,\infty)\rightarrow \mathbb R$ satisfying $\phi_2(0) = 0$. For brevity and simplicity in the sequel, we also introduce $\phi_+:[0,\infty)\rightarrow \mathbb R$ and $\phi_-:(-\infty,0]\rightarrow \mathbb R$ as
\begin{align}
 \phi_\pm(u) = \pm 1 -\frac{2}{\pi}u (1+\phi_2(u^2)),\eqlab{phipm}
\end{align}
so that 
\begin{align}
\phi(s) = \phi_\pm (s^{-1}),\quad \text{for} \quad s\gtrless 0. \eqlab{phipmProp}
\end{align}
\edNewNew{
Also, with $\phi$ as in \eqref{arctan} so that $\phi^{-1} =\tan\left(\frac{\pi}{2}s\right)$, $h:(0,\infty)\rightarrow \mathbb R$ in \eqref{haFunction} has the following asymptotics:
 Let 
  \begin{align}
 h_0(s) &=-\left(\beta^{-1} b\pi s\right)^{-1},\nonumber \\
  h_\infty(s) &= -\frac{1}{\pi}\left(1+\beta ^{-1}{b}s\right),\eqlab{asymptoteh}
 \end{align}
 for $s>0$. 
  Then 
 \begin{align*}
  h(s) &= h_0(s)(1+o(1)),\\
  h(s) &= h_\infty(s)+o(1),
 \end{align*}
%
 for $s\rightarrow 0^+$ and $s\rightarrow \infty$, respectively.
We sketch the graph of $h$ in \figref{hFunc}.  }
\begin{figure}[h!] 
\begin{center}
{\includegraphics[width=0.5\textwidth]{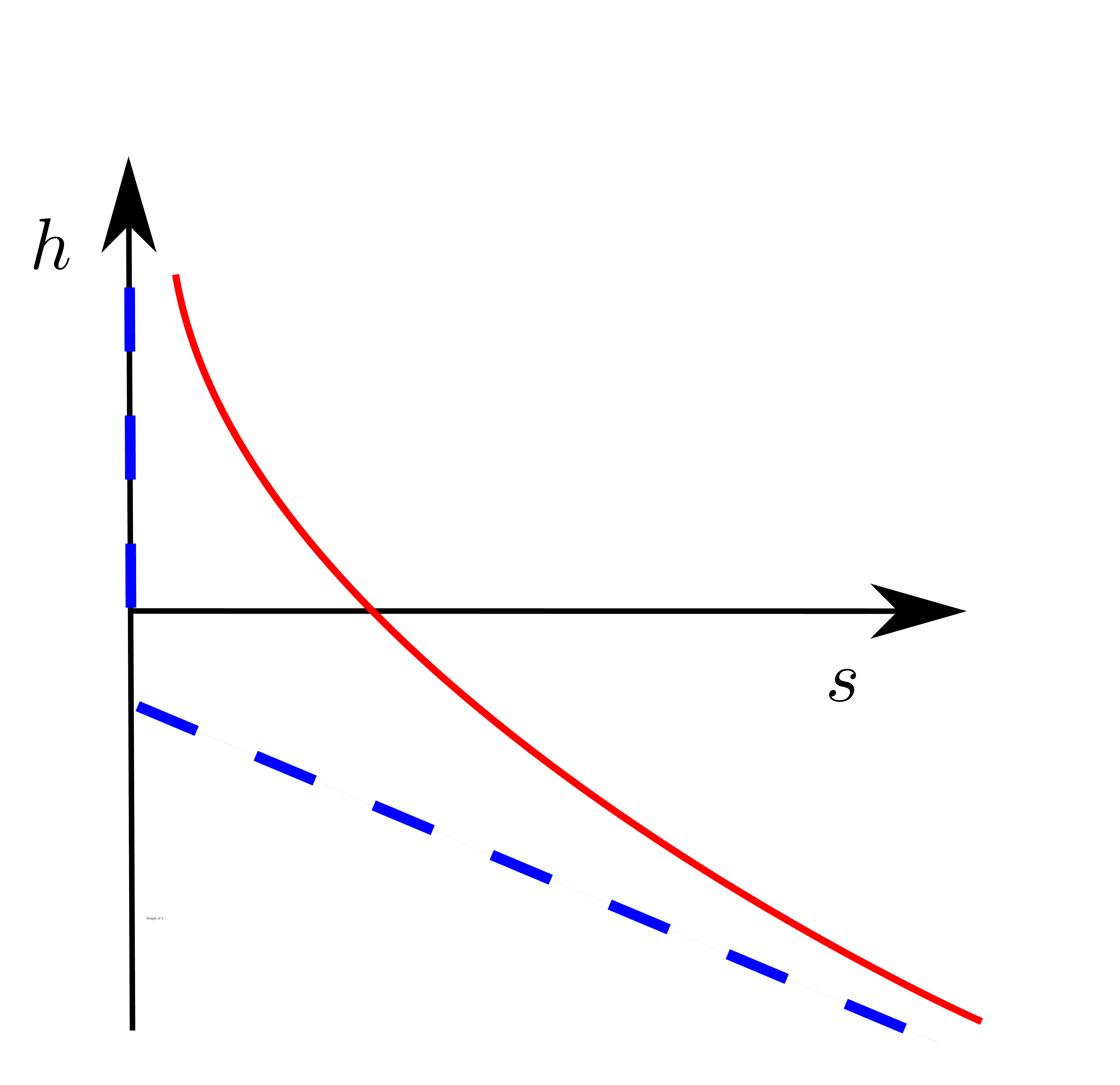}}
\end{center} 
 \caption{\ed{Sketch of the graph of $h$ in red, given by \eqref{haFunction}. The vertical asymptote $s=0$ and $h_\infty(s)$, given by \eqref{asymptoteh}, are dashed in blue}. }
\figlab{hFunc}
\end{figure}

\subsection{Main result}\seclab{main0}


Fix $\delta>0$ and $\nu>0$. Then consider the sections $\Pi_{\text{in},\epsilon}$, $0\le \epsilon\ll 1$, and $\Pi_{\text{out}}$ transverse to the flow of \brpwl{} for $0<\epsilon\ll 1$, defined as follows:
\begin{align}
 \Pi_{\text{in},\epsilon} &= \{(x,y,z)\in \mathbb R^3\vert x=-\delta,\,(y,z)\in R_{\text{in},\epsilon}\},\eqlab{Pin}\\
 \Pi_{\text{out}} &= \{(x,y,z)\in \mathbb R^3\vert y=\nu,\,(x,z)\in R_{\text{out}}\},\eqlab{Pout}
\end{align}
where\begin{itemize}
\item \edNew{The set
$R_{\text{in},\epsilon}$
is a suitable rectangle in the $(y,z)$-plane, depending continuously on $\epsilon>0$, so that within $\Pi_{\text{in},\epsilon}$ we have $\dot x\ge c_1>0$ for \brpwl{} for some $c_1>0$ and all $\epsilon>0$ sufficiently small.  
Furthermore, $\Pi_{\text{in},0}\cap \Sigma$ is contained inside the funnel but does not include the span of the weak eigendirection $v_+$. Also, $\Pi_{\text{in},0}\subset \{y\in [0,\zeta]\}$ for $\zeta>0$ sufficiently small so that all points in $\Pi_{\text{in},0}\cap \Sigma_+$ reach the funnel under the forward flow of $X^+$. Notice, that $\Pi_{\text{in},0}$, by these assumptions, is transverse ($\dot x>0$) to the PWS Filippov flow, illustrated in \figref{VISliding}. 
See also \remref{xdotNeg} below.}
%
\item $R_{\text{out}}$ is a suitably small rectangle in the $(x,z)$-plane so that $\Pi_{\text{out}}\subset\{y=\nu\}$ is a small section that is transverse to $U$ at the point 
\begin{align*}
u_{\text{out}} = U\cap \{y=\nu\}.
\end{align*}
Notice by \eqref{Uexpr}, that 
\begin{align*}
 u_{\text{out}} = \left( \beta^{-1}c\sqrt{2 \nu/b},\nu,\sqrt{2 \nu/b}\right).
\end{align*}
\end{itemize}

$\Pi_{\textnormal{in},\epsilon}$ and $\Pi_{\textnormal{out}}$ are naturally parametrized by $(y,z)\in R_{\text{in},\epsilon}$ and $(x,z)\in R_{\text{out}}$, respectively. We therefore consider the local mapping $$\mathcal L_\epsilon:\,\Pi_{\textnormal{in},\epsilon} \rightarrow \Pi_{\textnormal{out}}, \quad (y,z)\mapsto \mathcal L_\epsilon(y,z),$$ obtained by the forward flow of \brpwl{} for $0<\epsilon\ll 1$.

Our main technical result is the following theorem. 
\begin{theorem}\thmlab{mainThm}
Suppose assumptions (A) and (B) hold (see \eqsref{cgamma}{nonResonance2}).  Then there exists an $\epsilon_0>0$ such that for $0<\epsilon\le \epsilon_0$, $\mathcal L_\epsilon$ is well-defined and $C^1$. Moreover, 
\begin{align*}
 \max_{(y,z)\in R_{\textnormal{in},\epsilon}} \vert \mathcal L_\epsilon(y,z) - u_\textnormal{out}\vert + \max_{(y,z)\in R_{\textnormal{in},\epsilon}} \vert D\mathcal L_\epsilon(y,z)\vert =o(1),
\end{align*}
as $\epsilon\rightarrow 0^+$.
\end{theorem}

We prove \thmref{mainThm} in \secref{proof} after having introduced some further background in \secref{initialblowup} and \secref{geometry}.

\begin{remark}\remlab{xdotNeg}
Notice, that we do not take $\Pi_{\text{\textnormal{in}},\epsilon}$ with $R_{\text{\textnormal{in}},\epsilon}$ \eqref{Rin} as a neighbourhood of $y=0$: $R_{\text{\textnormal{in}},\epsilon}=R_{\text{\textnormal{in}}}=[-\zeta,\zeta]\times I_{\text{\textnormal{in}}}$, because then we would have $\dot x<0$ for $y<0$ while $\dot x>0$ for $y>0$ cf. \bpwl{}. Then $\mathcal L_\epsilon$ cannot be injective. Instead, one way to realise $\Pi_{\text{\textnormal{in}},\epsilon}$ is to define $R_{\text{\textnormal{in}},\epsilon}$ as follows:
\edNew{\begin{align}
R_{\text{in},\epsilon}=[\epsilon \omega,\zeta]\times I_{\text{in}}.\eqlab{Rin}
\end{align}
Here $I_{\text{in}}$ in \eqref{Rin} is a suitable closed interval on the negative $z$-axis so that $\Pi_{\textnormal{in},0}\cap \Sigma$ is contained inside the funnel but does not include the weak canard. See also \figref{VICanards} (in purple). Furthermore, $\omega$ in \eqref{Rin} satisfies the following inequality
\begin{align*}
 \omega> \omega_0\equiv \phi^{-1}\left(\frac{1-\beta^{-1}c}{1+\beta^{-1}c}\right),
\end{align*}
where $\phi^{-1}:(-1,1)\rightarrow \mathbb R$ is the inverse of $\phi$. With $\phi$ as in \eqref{arctan} we have $\phi^{-1}(s) =\tan\left(\frac{\pi}{2}s\right)$. Therefore the number $\omega_0$ is such that the set $\{(x,y,z)\vert y=\epsilon \omega_0,(x,z)\in \breve \Sigma_{sl}\}$ is the $x$-nullcline of \brpwl{}. In particular, $\dot x>0$ for any $(x,y,z)\in \{(x,y,z)\vert y>\epsilon\omega_0,(x,z)\in \breve \Sigma_{sl}\}$. }


\end{remark}

%
%
%

\subsection{Discussion}\seclab{intuition}
One can interpret the theorem in the following way: the forward orbit $U$ is distinguished amongst all the possible forward orbits leaving $q$. Indeed, the image $\mathcal L_\epsilon(\Pi_{\text{in},\epsilon})$ tends to $u_\text{out} = U\cap \Pi_{\text{out}}$ (see \figref{VISliding}) in the Hausdorff-distance as the regularized system approaches the PWS system ($\epsilon\rightarrow 0^+$). Intuitively, this makes sense because
$U$ is the only \textit{stable} exit from the two-fold of the PWS system. The other forward orbits in \figref{VISliding} are very \textit{fragile} due to the unstable sliding region. Furthermore, if we consider an initial condition of $X^+$ just above $\Sigma_{sl}^+$ close to $q$, then the forward flow follows close to $U$. 

But if we start below $\Sigma_{sl}^-$ and follow $X^-$, then since $l^-$ is invisible, we return to $\Sigma$ and potentially even $\Sigma_{sl}^-$. One could then imagine that this \textit{rotation} could continue indefinitely (\textit{slide} along $\Sigma_{sl}^-$ within funnel region; when reaching $\Sigma_{sl}^+$ follow $X^-$ and return to $\Sigma_{sl}^-$; \textit{slide} along $\Sigma_{sl}^-$ within funnel region; and so on) so that the forward orbit of the regularized system would never leave a small vicinity of the two-fold. A related phenomenon occurs in PWS systems with intersecting discontinuity sets \cite{guglielmi2017a}. The following lemma excludes this behaviour:
\begin{lemma}\lemmalab{sigma0}
Let 
\begin{align*}
\vartheta:\,&\breve \Sigma\cap \{x>0\}\rightarrow \breve \Sigma,\quad 
\vartheta(x,z) = (x_+(x,z),z_+(x,z)),
\end{align*}
be so that $(x_+(x,z),0,z_+(x,z))$ is the first return of $(x,0,z)\in \Sigma\cap \{x>0\}$ to $\Sigma$ under the forward flow of $X^-$. Then 
 \begin{align}
  \vartheta(x,z) = (-x,z+\frac{2\gamma}{b}x),\quad (x,z)\in \breve \Sigma\cap \{x>0\}.\eqlab{eq.sigma0Neg}
 \end{align}
 In particular, $\vartheta$ has a smooth extension to $(x,z)\in \breve \Sigma\cap \{x\ge 0\}$, so that $$(x,0,z)\mapsto (x_+(x,z),0,z_+(x,z)),$$ leaves the invisible fold line $l^-$ fixed: $(0,0,z)\mapsto (0,0,z)$, or simply $\vartheta(0,z)=(0,z)$, for all $z\in \mathbb R$. Furthermore,
 \begin{align}
  D\vartheta(0)\breve v_+ = \begin{pmatrix}
                         -1\\
                        z_1^*
                        \end{pmatrix},\eqlab{Dsigmav}
 \end{align}
 with $\breve v_+$ \eqref{vpm}, the weak direction of the node of \eqref{slidingVectorField}, and where
 \begin{align*}
  z_1^*\equiv -\chi_++\frac{2\gamma}{b}.\end{align*}
The quantity $z_1^*$ satisfies the following inequality
\begin{align}
z_1^*>\chi_-.\eqlab{z1chiN0}
\end{align}
\end{lemma}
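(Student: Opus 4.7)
The plan is to integrate the constant vector field $X^-$ explicitly (since on $\{y<0\}$ the truncated system \eqref{XNPositiveT} is linear with $\dot x=-1,\,\dot y=-\beta x,\,\dot z=b^{-1}\gamma$), read off the return time, and then carry out straightforward linear algebra using the formulas \eqref{vpm}, \eqref{chipm}, and assumption (A).

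\textbf{Step 1: Integrate and find the return time.} Starting from $(x,0,z)$ with $x>0$, the flow of $X^-$ is
\[
x(t)=x-t,\qquad y(t)=\tfrac12 \beta\, t\,(t-2x),\qquad z(t)=z+b^{-1}\gamma\, t.
\]
Since $\dot y(0)=-\beta x<0$, the orbit enters $\{y<0\}$ immediately, and $y(t)=0$ occurs again precisely at $t_\ast=2x>0$. Evaluating at $t_\ast$ gives the first return $(x_+,z_+)=(-x,\,z+2\gamma x/b)$, which is \eqref{eq.sigma0Neg}.

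\textbf{Step 2: Smooth extension to $l^-$.} The map $(x,z)\mapsto(-x,\,z+2\gamma x/b)$ is in fact affine linear, hence extends smoothly (indeed as a linear map) to all of $[0,\infty)\times\mathbb R$. Evaluating at $x=0$ yields $\vartheta(0,z)=(0,z)$, so that every point of $l^-=\{x=0=y\}$ is a fixed point of $\vartheta$, as claimed. (Geometrically the return time $t_\ast=2x$ tends to $0$ as $x\to 0^+$, but because $\dot z$ is bounded in $y<0$ the $z$-displacement $2\gamma x/b$ is smooth in $x$.)

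\textbf{Step 3: Compute $D\vartheta(0)v_+$.} Differentiating gives the constant Jacobian
\[
D\vartheta(x,z)=\begin{pmatrix} -1 & 0\\ 2\gamma/b & 1\end{pmatrix},
\]
so that, with $v_+=(1,-\chi_+)^T$ from \eqref{vpm},
\[
D\vartheta(0)v_+=\begin{pmatrix}-1\\ 2\gamma/b-\chi_+\end{pmatrix},
\]
which is \eqref{Dsigmav} with $z_1^\ast=-\chi_++2\gamma/b$.

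\textbf{Step 4: Verify $z_1^\ast>\chi_-$.} From \eqref{chipm}, a direct calculation yields $\chi_++\chi_-=(\gamma-c)/b$, so the inequality $z_1^\ast>\chi_-$ is equivalent to $2\gamma/b>\chi_++\chi_-=(\gamma-c)/b$, i.e., to $c+\gamma>0$. This is exactly the first inequality of assumption (A) \eqref{cgamma} (since the right-hand side $\sqrt{(c-\gamma)^2-4b\beta}$ is nonnegative). This closes the argument.

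There is essentially no genuine obstacle: the proof reduces to solving a linear ODE and one algebraic manipulation. The only point that requires a small remark is the smoothness of $\vartheta$ at $x=0$, which one might naively worry about since the return time vanishes there; but the linearity of $X^-$ makes the extension automatic.
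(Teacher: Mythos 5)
Your proof is correct and follows essentially the same route as the paper's: the paper's Appendix~\ref{app:lemma0} dispatches \eqref{eq:eq.sigma0Neg} as ``simple calculations'' (which you have simply written out by integrating the linear field $X^-$), obtains \eqref{eq:Dsigmav} by differentiating, and verifies \eqref{eq:z1chiN0} via the identical computation $b(z_1^*-\chi_-)=c+\gamma>\sqrt{(c-\gamma)^2-4b\beta}>0$ using (A). No gaps.
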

\begin{proof}
See \appref{lemma0}.
\end{proof}
A similar local result holds for $X^-$ in \bpws{}$_{y<0}$.
The consequence of \eqref{z1chiN0} is that the half-line $$\{(x,y,z)\in \Sigma \vert\, (x,z) = s D\vartheta(0)\breve v_+=s(-1,z_1^*),\,s\in (0,\infty)\},$$ obtained from \eqref{Dsigmav} as the image of $\gamma^w\cap \Sigma_{sl}^+$, is always contained {\em outside} the funnel of the PWS Filippov system (see \figref{VICanards}). Therefore initial conditions within $\Sigma_{sl}^+$ with $(x,y,z)=s v_+$, $s>0$ small, upon following $X^-$, will return to $\Sigma$ with $(x,y,z)=s(-1,0,z_1^*)$ cf. \eqref{eq.sigma0Neg}. Then from there they follow: 
\begin{itemize}
\item (for $z_1^*> 0$): $X^+$ through crossing. 
\item (for $z_1^*\le 0$): $X_{sl}$ up to the visible fold line $l^+$ and then from there subsequently $X^+$.
\end{itemize}
For $s$ small, in both cases the resulting forward orbit within $\Sigma^+$ will then remain close to $U$. 

\ed{Furthermore, following \cite{krihog}, it is known that there exist slow manifolds $S_{a,\epsilon}$ and $S_{r,\epsilon}$ of \brpwl{} (see also \propref{Fenichel} below) that carry reduced flows that are smoothly $\mathcal O(\epsilon)$-close to the sliding flow. These invariant manifolds intersect transversely along a perturbed weak canard if condition (B) holds (see also \lemmaref{lines} below). This implies, due to the contraction towards the weak canard, that $S_{a,\epsilon}$ eventually aligns itself, in a neighbourhood of the canard on the repelling side, with the weak canard's foliation of unstable fibers. These unstable fibers are due to the existence of unstable fibers of the repelling slow manifold $S_{r,\epsilon}$. Hence, initial conditions within $\Pi_{\text{in},\epsilon}$ will eventually move up or down (we will describe this carefully in cases (a) and (b) below using the position of $\Pi_{\text{in},\epsilon}$ and the eigenvalue ratio $\xi$) and follow $X^+$ or $X^-$ for $0<\epsilon \ll 1$. Combining the above information, about the PWL system in \lemmaref{sigma0} and the weak canard of the regularization, gives the key intuition into why our main theoretical result holds.}

\subsection{Global dynamics}\seclab{globaldynamics}
Suppose we make a mild assumption about the global dynamics of the regularization of the PWS $X=(X^+,X^-)$.
\begin{itemize}
 \item[(C)] There exists a $C^1$ (global) mapping $\mathcal G_\epsilon:\Pi_{\textnormal{out}}\rightarrow \Pi_{\textnormal{in},\epsilon}$, obtained by the forward flow of the regularization, with uniformly bounded derivative: $\vert D\mathcal G_\epsilon\vert \le K$ for some $K>0$ and all $\epsilon>0$ sufficiently small.
\end{itemize}
Then by the Contraction Mapping Theorem, the following result is a simple corollary of \thmref{mainThm}.
\begin{corollary}\corlab{cor}
Suppose (A), (B), (C) and consider the Poincar\'e return mapping 
\begin{align*}
 \mathcal P_\epsilon =  \mathcal L_\epsilon\circ \mathcal G_\epsilon:\,\Pi_{\textnormal{out}}\rightarrow \Pi_{\textnormal{out}},\,(x,z)\mapsto (x_+(x,z),\nu,z_+(x,z)).
\end{align*}
Then there exists an $\epsilon_0>0$ such that for $0<\epsilon\le \epsilon_0$, the mapping $\mathcal P_\epsilon$ has a unique and attracting fixed point, which is $o(1)$-close to $u_\textnormal{out}$. 
\end{corollary}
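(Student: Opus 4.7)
The plan is to apply the Banach contraction mapping theorem to $\mathcal{P}_\epsilon$ on $\Pi_{\text{out}}$, using \thmref{mainThm} to control the local factor $\mathcal{L}_\epsilon$ and (C) to control the global factor $\mathcal{G}_\epsilon$. Concretely, I view $\Pi_{\text{out}}\subset\mathbb{R}^2$ as a compact convex neighborhood of $u_{\text{out}}$ and aim to show that, for all sufficiently small $\epsilon>0$, the map $\mathcal{P}_\epsilon$ (i) sends $\Pi_{\text{out}}$ into itself and (ii) has Lipschitz constant strictly less than $1$.

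For (ii), the idea is a simple chain rule estimate. By assumption (C), $\mathcal{G}_\epsilon$ is smooth on the compact set $\Pi_{\text{out}}$, so $D\mathcal{G}_\epsilon$ is bounded, uniformly in $\epsilon$ for $\epsilon$ small (this uniformity follows from standard smooth dependence of the regularized flow on $\epsilon$ away from the singular region, e.g.\ on the global portion of the orbit that does not interact with the two-fold). Combining this with $D\mathcal{L}_\epsilon = o(1)$ from \thmref{mainThm}, the chain rule yields
\begin{equation*}
 D\mathcal{P}_\epsilon(x,z) \;=\; D\mathcal{L}_\epsilon\bigl(\mathcal{G}_\epsilon(x,z)\bigr)\, D\mathcal{G}_\epsilon(x,z) \;=\; o(1)\cdot O(1) \;=\; o(1),
\end{equation*}
so $\sup_{\Pi_{\text{out}}}\|D\mathcal{P}_\epsilon\|\le\tfrac12$ for $\epsilon$ small. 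In particular, $\mathcal{P}_\epsilon$ is a contraction.

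For (i), the image of $\mathcal{L}_\epsilon$ lies within an $o(1)$-neighborhood of $u_{\text{out}}$ by \thmref{mainThm}, hence so does $\mathcal{L}_\epsilon\circ\mathcal{G}_\epsilon(\Pi_{\text{out}})$. Since $u_{\text{out}}$ is an interior point of $\Pi_{\text{out}}$ (by the definition of $\Pi_{\text{out}}$ as a neighborhood of $U\cap\{y=\nu\}$), there exists $\epsilon_0>0$ such that for all $0<\epsilon\le\epsilon_0$ this image is contained in $\Pi_{\text{out}}$. Thus $\mathcal{P}_\epsilon:\Pi_{\text{out}}\to\Pi_{\text{out}}$ is a well-defined contraction of a complete metric space, and Banach's fixed point theorem furnishes a unique, exponentially attracting fixed point. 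Finally, since the fixed point lies in $\mathcal{P}_\epsilon(\Pi_{\text{out}})$, it is automatically $o(1)$-close to $u_{\text{out}}$ as $\epsilon\to 0^+$.

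The argument is essentially routine once \thmref{mainThm} is in hand; the only genuine point of care is the uniform-in-$\epsilon$ boundedness of $D\mathcal{G}_\epsilon$. This will be the main thing I would double-check: it is reasonable because the global return trajectories of the regularized system remain bounded away from the two-fold (where any $\epsilon$-dependent singular behavior is concentrated), so on the global leg $X_\epsilon$ converges to the PWS field uniformly in $C^1$, giving uniform $C^1$-bounds on $\mathcal{G}_\epsilon$ by Gr\"onwall-type estimates.
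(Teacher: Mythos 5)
Your proposal is correct and is exactly the argument the paper intends: the paper gives no details, stating only that the corollary follows ``by the Contraction Mapping Theorem'' from \thmref{mainThm}, and your fleshed-out version (chain rule giving $D\mathcal P_\epsilon = o(1)$, invariance of $\Pi_{\text{out}}$ from the $o(1)$-closeness of the image to the interior point $u_{\text{out}}$, then Banach) is the standard way to make that precise. Your flagged point of care --- uniform-in-$\epsilon$ $C^1$-bounds on $\mathcal G_\epsilon$ --- is indeed the only content hidden in assumption (C), and your justification via regular perturbation away from the two-fold is the right one.
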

A similar result is known for the passage through folded nodes in slow-fast systems with one fast and two slow variables \cite[Theorem 4.1]{brons-krupa-wechselberger2006:mixed-mode-oscil}. We will discuss this connection further in \secref{discuss}. In \figref{examples} we illustrate some PWS examples with global returns to the two fold. \corref{cor} shows that the regularization of these systems \textit{can}  (see \secref{discuss} below, in particular assumption (D), for further discussion) produce attracting limit cycles that follow the \textit{singular PWS cycle} (illustrated in \figref{examples} using thick blue lines). 

\begin{figure}[h!] 
\begin{center}
\subfigure[]{\includegraphics[width=.49\textwidth]{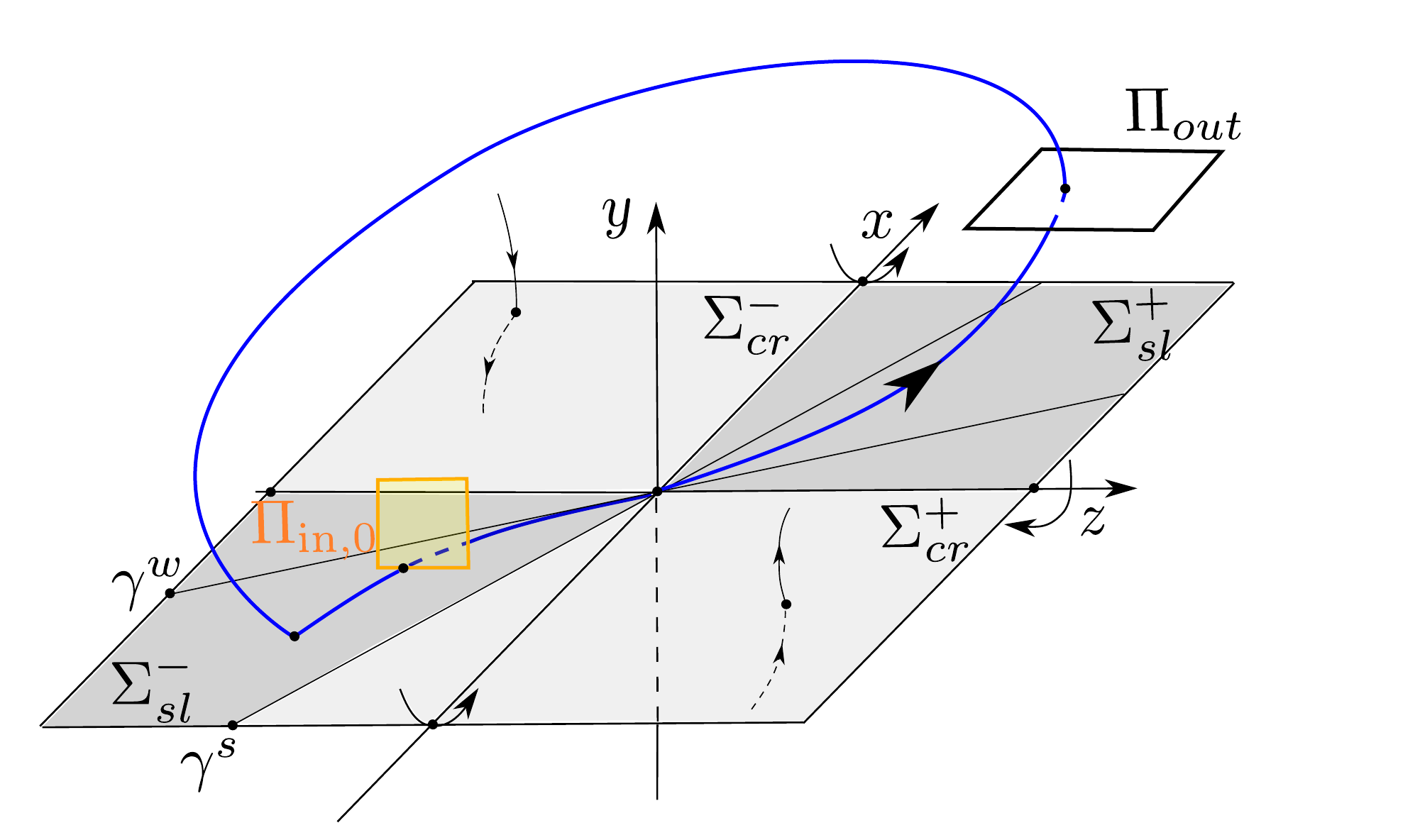}}
\subfigure[]{\includegraphics[width=.49\textwidth]{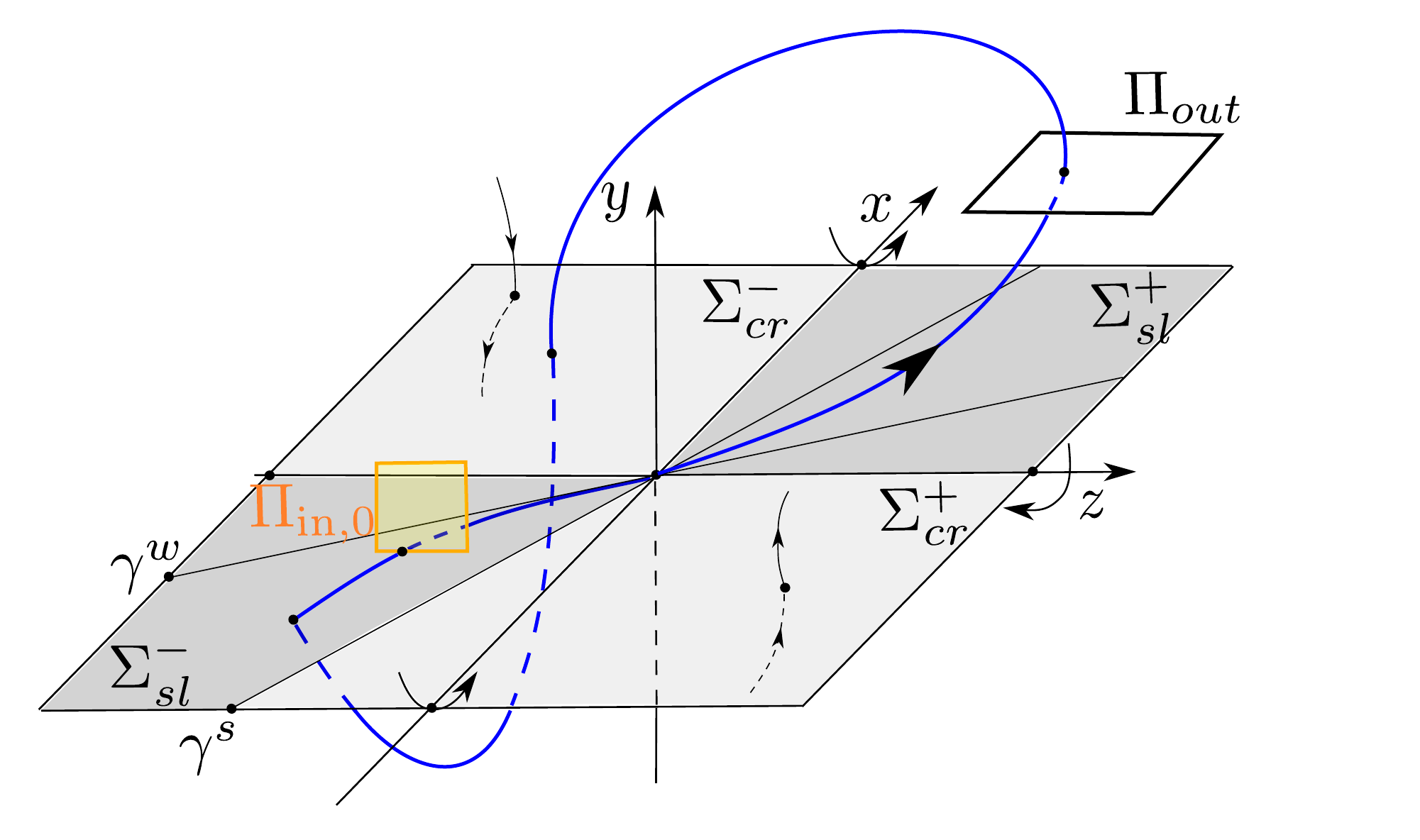}}
\subfigure[]{\includegraphics[width=.49\textwidth]{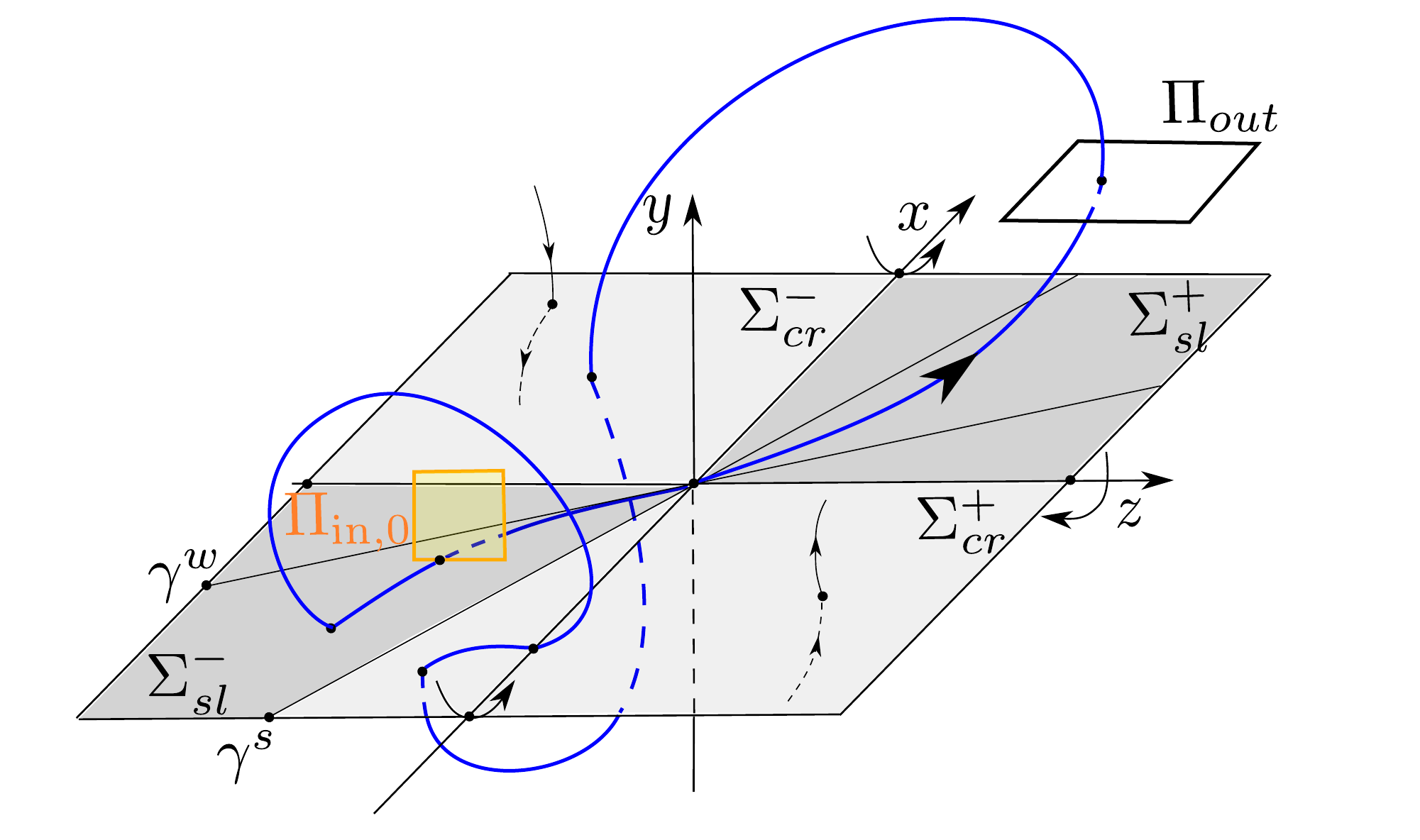}}
\end{center}
 \caption{Different (global) PWS systems with a visible-invisible two-fold, satisfying assumptions (A) and (B), where the segment $U$ returns to $q$ under the forward flow of $X^\pm$ and $X_{sl}$: (a) the segment $U$ intersects the funnel directly; (b) the segment $U$ intersects $\Sigma_{cr}^-$ and then follows $X^-$ up to the funnel; (c) as in (b), but the orbit segment of $X^-$ returns outside the funnel, then follows $X_{sl}$ up until the visible fold line $l^+$ (regularization of which is also described in \cite{krihog}), and then the orbit of $X^+$ returns the funnel to complete the closed cycle. \corref{cor} of \secref{globaldynamics} shows that the regularization of these systems can produce attracting limit cycles.}
\figlab{examples}
\end{figure}


\subsection{Outline of the rest of the paper}
\ed{In \secref{initialblowup}, we blowup our regularized system. In \secref{newDirectional}, we present the directional charts we use to study this blowup. Then in \secref{geometry}, we present the geometry of the regularization using our blowup approach. Here we also demonstrate the use of our directional charts, show the equivalence between stable/unstable sliding and normally hyperbolic, attracting/repelling critical manifolds and how folds and two-folds give rise to loss of hyperbolicity. In the present case, the two-fold becomes a circle $\bar q$ of fully nonhyperbolic critical points of the blown up regularization. These sections are kept fairly general, and the methods used, apply to general regularizations of different PWS systems. We believe that our geometric approach to these kind of problems is new. 

The main result \thmref{mainThm} is then proved in \secref{proof} using an additional blowup of the circle $\bar q$. We complete our paper in \secref{discuss} with a discussion and conclusion section and consider possible extensions of our results. }




%

\section{Blowup of the regularization of a PWS system}\seclab{initialblowup}

To study \brpwl{} (or equivalently \brpwlf{}) we proceed as in \cite{kriBlowup,krihog,krihog2} by considering the following blowup transformation 
\begin{align}
(x,z,\rho ,(\bar y,\bar \epsilon))&\mapsto (x,z,y,\epsilon),\eqlab{blowup0Transformation}
%
%
\end{align}
defined by
\begin{align}
 (y,\epsilon) = \rho (\bar y,\bar \epsilon),\quad (\rho ,(\bar y,\bar \epsilon))\in [0,\infty)\times S^1,\eqlab{blowup0}
\end{align}
where
\begin{align*}
S^1 = \{(\bar y,\bar \epsilon)\vert \bar y^2+\bar \epsilon^2=1\},
\end{align*}
is the unit-circle. 
\ed{The preimage of $y=\epsilon=0$ under \eqref{blowup0Transformation} is $\rho=0,(\bar y,\bar \epsilon)\in S^1$. One therefore says that the transformation \eqref{blowup0Transformation} (or, in fact, the inverse process) \textit{blows up} $y=\epsilon=0$ to the unit circle $(\bar y,\bar \epsilon)\in S^1$, as shown in \figref{InitialCharts}. \eqref{blowup0Transformation} gives a vector-field $\overline X$ on the \textit{blowup space}
\begin{align*}
 \overline P = \{(x,z,\rho,(\bar y,\bar \epsilon))\in \mathbb R^2 \times [0,\infty)\times S^1\},
\end{align*}
by pull-back of the {\em extended fast-time vector-field}:
\begin{align}
\textbf{x}' &= \epsilon X_\epsilon(\textbf x),\eqlab{extended}\\
\epsilon' &=0,\nonumber
 \end{align}
see also \brpwlf{}. Here $\overline X\vert_{\rho=0}\ne 0$ for $\bar y\ne \pm 1$. However, $\overline X\vert_{(\bar y,\bar \epsilon)=(\pm 1,0)}\equiv 0$, recall \eqref{ppp}, by construction. We shall therefore consider the \textit{desingularized} vector-field  $\widetilde X$ defined by 
\begin{align}
\widetilde X = (\pm \bar y^{-1}\bar \epsilon)^{-1} \overline X\quad \text{for} \quad (\bar y,\bar \epsilon)\quad \text{near} \quad (\pm 1,0)\in S^1,\eqlab{Widetilde1}
\end{align}
respectively. We shall see that $\widetilde X$ is well-defined and non-trivial, even at $(\bar y,\bar \epsilon)=(\pm 1,0)$. Notice, that the orbits of $\widetilde X$ and $\overline X$ coincide for $(\bar y,\bar \epsilon)\ne (\pm 1,0)$ and $\bar \epsilon>0$ where the multiplication of $(\pm \bar y^{-1}\bar \epsilon)^{-1}>0$ corresponds to a nonlinear transformation of time (speeding up time near $(\bar y,\bar \epsilon) =(\pm 1,0)$). But for $(\bar y,\bar \epsilon)= (\pm 1,0)$ we now have $\widetilde X\ne 0$ and as a result $\widetilde X$ will have improved hyperbolicity properties. Obviously, we cannot divide $\overline X$ by $\pm \bar y^{-1}\bar \epsilon$ near $\bar y =0$. There we will just study the orbits of $\overline X$, even for $\rho=0$. Following this discussion, we therefore define $\widetilde X$ as 
\begin{align}
\widetilde X \equiv \psi (\bar y,\bar \epsilon)^{-1} \overline X, \quad \forall \quad (\bar y,\bar \epsilon)\in S^1,\eqlab{Widetilde2}
\end{align}
where $\psi:\,(\bar y,\bar  \epsilon)\in S^1\rightarrow [0,\infty)$ is suitable smooth interpolation of the time transformation, which satisfies the following:
\begin{itemize}
 \item[(i)] $\psi(\bar y,\bar \epsilon)>0$ for all $\bar \epsilon>0$;
 \item[(ii)] $\psi(\bar y,\bar \epsilon) = \pm \bar y^{-1}\bar \epsilon$
in a neighbourhood of $(\bar y,\bar \epsilon)=(\pm 1,0)$;
\item[(iii)] $\psi(\bar y,\bar \epsilon) = 1$
in a neighbourhood of $(\bar y,\bar \epsilon)=(0,1)$.
\end{itemize}
Clearly, such a smooth interpolation function $\psi$ exists. In fact, even a piecewise polynomial ($C^k$-smooth $\psi$) will do for our purposes. But the details are not important. 
It is $\widetilde X$ that we shall study in the following.

To describe $\overline P$ and $\widetilde X$, we could use polar variables:
\begin{align}
\bar y=\sin \theta,\,\bar \epsilon=\cos \theta.\eqlab{polar}
\end{align}
But in agreement with general theory \cite{krupa_extending_2001}, it is more useful to consider \textit{directional charts}. }

\subsection{Directional charts}\seclab{newDirectional}
\ed{We will use different directional charts in the sequel. We therefore define these blowup-dependent charts now (in some generality) and introduce our general terminology before we apply these concepts to \eqref{blowup0}. 

Consider therefore
 $\textbf x=(x_1,\ldots,x_n)\in \mathbb R^n$, $\alpha =(\alpha_1,\ldots,\alpha_n)\in \mathbb N^n$ and the following general, weighted (or quasihomogeneous \cite{kuehn2015}), blowup transformation:
 \begin{align}
  \Phi_\alpha:\,[0,\infty)\times S^{n-1}\rightarrow \mathbb R^n:\, (\rho,\bar{\textbf x})\mapsto \textbf x,\quad (\rho,\bar{\textbf x})\mapsto (\rho^{\alpha_1}\bar x_1,\ldots,\rho^{\alpha_n}\bar x_n),\eqlab{generalBlowup}
 \end{align}
Here the pre-image of $\textbf{x} = 0$ is $\{0\}\times S^{n-1}$ where
\begin{align*}
 S^{n-1} = \left\{\bar{\textbf x}=(\bar x_1,\ldots,\bar x_n)\in \mathbb R^n \vert \sum_{i=1}^n \bar x_i^2=1 \right\},
\end{align*}
is the unit $(n-1)$-sphere. The inverse process of \eqref{generalBlowup} therefore blows up $\textbf{x} = 0$ to $S^{n-1}$. The positive integers $\alpha_i\in \mathbb N$ are called the {\it weights} of the blowup, see \cite{kuehn2015}. 

\edNew{\begin{definition}
Let $j\in \{1,\ldots,n\}$ and write $\hat{\textbf{x}}^j=(\hat x_1,\ldots,\hat x_{j-1},\hat x_{j+1},\ldots,\hat x_n)\in \mathbb R^{n-1}$. Then the \textit{directional blowup} in the positive $j$-th direction is a {mapping}:
 \begin{align*}
  \Psi_\alpha^{j}:\,[0,\infty)\times \mathbb R^{n-1}\rightarrow \mathbb R^n, 
  \end{align*}
given by
\begin{align}
(\hat \rho,\hat{\textbf x}^j)&\mapsto \textbf x = (\hat \rho^{\alpha_1} \hat x_1,\ldots, \hat \rho^{\alpha_{j-1}}\hat x_{j-1},\hat \rho^{\alpha_{j}}, \hat \rho^{\alpha_{j+1}} \hat x_{j+1},\ldots,\hat \rho^{\alpha_{n}}\hat x_n).\eqlab{directionPos}
  \end{align}
  \edNewNew{ The \textit{directional chart} ``$\bar x_j=1$'' is then a coordinate chart 
  \begin{align*}
  \Xi^j_\alpha:\, [0,\infty) \times S^{n-1} \rightarrow [0,\infty)\times \mathbb R^{n-1},
  \end{align*}
  such that 
  \begin{align*}
  \Phi_\alpha = \Psi^j_\alpha \circ \Xi^j_\alpha,
  \end{align*} 
    }
  
  Similarly, we define the \textit{directional blowup} in the negative $j$-th direction as the mapping (new symbols compared to \eqref{directionPos})   \begin{align*}
  \Psi_\alpha^{j}:\,[0,\infty)\times \mathbb R^{n-1}\rightarrow \mathbb R^n, 
  \end{align*}
given by 
\begin{align}
(\hat \rho,\hat{\textbf x}^{j})&\mapsto \textbf x = (\hat \rho^{\alpha_1} \hat x_1,\ldots, \hat \rho^{\alpha_{j-1}}\hat x_{j-1},-\hat \rho^{\alpha_{j}}, \hat \rho^{\alpha_{j+1}} \hat x_{j+1},\ldots,\hat \rho^{\alpha_{n}}\hat x_n).\eqlab{directionNeg}
  \end{align}
  \edNewNew{   The \textit{directional chart} ``$\bar x_j=-1$'' is then a coordinate chart 
   \begin{align*}
  \Xi^{j}_\alpha:\, [0,\infty) \times S^{n-1}\rightarrow [0,\infty)\times \mathbb R^{n-1},
  \end{align*}
  such that 
  \begin{align*}
  \Phi_\alpha=\Psi^{j}_\alpha \circ \Xi^{j}_\alpha.
  \end{align*} }
  
  \end{definition}}
  
  \begin{figure}[h!] 
\begin{center}
{\includegraphics[width=.69\textwidth]{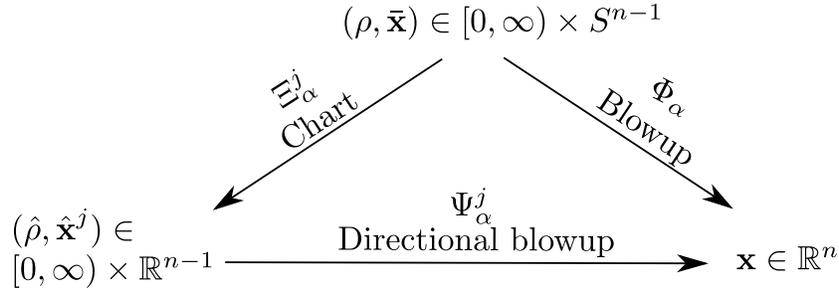}}
\end{center}
 \caption{\ed{Given a blowup and an associated directional blowup (two edges in the diagram), we define the corresponding chart as the mapping (the final, third edge in the diagram) that makes the diagram commute  (on a subset of $[0,\infty)\times S^{n-1}$)}. }
\figlab{commute}
\end{figure}
%
We illustrate the charts in \figref{commute}. Notice that the directional blowups in \eqsref{directionPos}{directionNeg}, respectively, are diffeomorphisms for $\hat \rho>0$. But the preimage of $\textbf x=0$ is $\hat \rho=0,\hat{\textbf{x}} \in \mathbb R^{n-1}$.


Now, straightforward calculations show that the chart $\bar x_j = 1$ is uniquely given by
\begin{align}
(\rho,\bar{\textbf {x}})\mapsto \left\{\begin{array}{cc}
 \hat \rho &= \rho \bar x_j^{1/\alpha_j}\\
 \hat x_i &=\bar x_j^{-\alpha_i/\alpha_j} \bar x_i
 \end{array}\right.,
\eqlab{chartPos}
 \end{align}
for all $i\ne j$, 
and that the associated coordinate patch covers $S^{n-1}\cap \{\bar x_j>0\}$. Similarly, the chart $\bar x_j = -1$ is uniquely given by
\begin{align}
(\rho,\bar{\textbf {x}})\mapsto \left\{\begin{array}{cc}
 \hat \rho &= \rho (-\bar x_j)^{1/\alpha_j}\\
 \hat x_i &=(-\bar x_j)^{-\alpha_i/\alpha_j} \bar x_i
 \end{array}\right.
 ,\eqlab{chartNeg}
\end{align}
for all $i\ne j$ and the associated coordinate patch covers $S^{n-1}\cap \{\bar x_j<0\}$. Therefore the collection of all the charts $\bar x_i = \pm 1$, $i=1,\ldots,n$ provides an atlas on $(\rho,\bar{\textbf x})\in [0,\infty) \times S^{n-1}$ with smooth coordinate changes between charts that overlap. In particular, if $\alpha_i=1$ for all $i$ (in which case the blowup is said to be homogeneous or radial) then the charts $\bar x_j=\pm 1$ simply parametrise $S^{n-1}$ using stereographic projections onto the associated coordinate planes, see e.g. \figref{InitialCharts}.

With slight abuse of notation, we will, as is common in the literature, simply refer to \eqsref{directionPos}{directionNeg} as the (directional) charts $\bar x_j = \pm 1$, respectively (although they are actually the coordinate transformations in the local coordinates of the charts themselves,  see also \figref{commute}). 
Notice that the directional blowups are easy to compute: We just substitute $\bar x_j=\pm 1$ into \eqref{generalBlowup}, see \eqsref{directionPos}{directionNeg}.


%

%

In the context of \eqref{blowup0} we therefore obtain the chart $\bar \epsilon=1$ by setting $\bar \epsilon=1$ in \eqref{blowup0}:
\begin{align*}
 (\hat \rho,\hat y) \mapsto (y,\epsilon) = (\hat \rho \hat y,\hat \rho).
\end{align*}
To avoid too many symbols we eliminate $\hat \rho=\epsilon\ge 0$ and write the chart as
 \begin{align}
\bar \epsilon=1: \quad (\epsilon,\hat y)\mapsto y=\epsilon \hat y,\quad \hat y\in \mathbb R,\,\epsilon \ge 0.\eqlab{yhat}
\end{align}
Notice, that equating \eqref{yhat} with \eqref{polar} gives a coordinate change between the charts with $\hat y =\tan \theta$, $\theta \in (-\pi/2,\pi/2)$. Therefore, in agreement with the general discussion above, the chart $\bar \epsilon=1$ geometrically corresponds to a coordinate plane $(\hat y,\epsilon)$, with $\epsilon\ge 0$, attached to $\bar \epsilon=1$ on $S^1$ as illustrated in \figref{InitialCharts}, where the $\hat y$-coordinate axis parametrises $S^1\cap \{\bar \epsilon>0\}$ using stereographic projection. The chart $\bar \epsilon=1$ therefore also covers $(\bar y,\bar \epsilon)\in S^1\cap \{\bar \epsilon>0\}$.

To cover $(\bar y,\bar \epsilon)\in S^1\cap \{\bar y>0\}$ we introduce the directional chart $\bar y=1$ as follows:
\begin{align*}
(\hat \rho,\hat \epsilon) \mapsto (y,\epsilon ) =(\hat \rho,\hat \rho \hat \epsilon),
\end{align*}
introducing $\hat \epsilon\in [0,\infty)$ and (a new) $\hat \rho\in [0,\infty)$. Again, to avoid too many symbols, we eliminate $\hat \rho=y$ and simply write this ``chart'' as 
\begin{align}
\bar y=1:\quad \quad (y,\hat \epsilon)\mapsto \epsilon = y\hat \epsilon,\quad y\ge 0,\,\hat \epsilon\ge 0, \eqlab{chartEpshat2}
\end{align}
Similarly, we obtain the chart $\bar y=-1$ as
\begin{align}
\bar y=-1:\quad \quad (y,\hat \epsilon)\mapsto  \epsilon = y\hat \epsilon ,\quad y\le 0,\,\hat \epsilon \le  0, \eqlab{chartEpshat}
\end{align}
Geometrically, the charts $\bar y=\pm 1$ are illustrated in \figref{InitialCharts}. They parametrise $S^1\cap \{\bar y\gtrless 0\}$, respectively, using stereographic projections onto the $\hat \epsilon$-coordinate axes attached to $S^1$ at $\bar \epsilon=\pm 1$. 
{For simplicity, we use the same symbols in \eqsref{chartEpshat2}{chartEpshat}, even though the domains are  different ($\hat \epsilon\ge 0$ and $\hat \epsilon\le 0$, respectively). Notice also $(\bar y,\bar \epsilon)=(1,0)\mapsto \hat \epsilon=0$ in $\bar y=1$ while $(\bar y,\bar \epsilon)=(-1,0)\mapsto \hat \epsilon=0$ in $\bar y=-1$.} 
More specifically,
\begin{align}
 \hat \epsilon = \hat y^{-1},\eqlab{hatEpshatY}
\end{align}
and hence by \eqref{polar} $\hat \epsilon = \cot \theta$, $\theta\in (0,\pi/2]$ and $\theta \in [-\pi/2,0)$,
for $\hat y\lessgtr 0$ in \eqsref{chartEpshat2}{chartEpshat}, respectively, 
with $\hat y$ in \eqref{yhat}.

The charts \eqref{yhat}, \eqref{chartEpshat2} and \eqref{chartEpshat} now cover the half-circle $S^1\cap \{\bar \epsilon\ge 0\}$ completely. This is the relevant part of the circle since we are only interested in $\epsilon\ge 0$. 

Finally, we note that
\begin{align*}
\hat \epsilon = \pm \bar y^{-1} \bar \epsilon,
\end{align*}
in \eqsref{chartEpshat2}{chartEpshat}. This follows from the details of the actual charts themselves, see e.g. the general expressions \eqsref{chartPos}{chartNeg}. Therefore we obtain $\widetilde X$, see \eqref{Widetilde1}, in the charts \eqsref{chartEpshat2}{chartEpshat}, by simply dividing the local vector-fields by $\pm \hat \epsilon$, respectively. Here the local vector-fields are obtained from the extended vector-field \eqref{extended} by applying the substitutions:
\begin{align*}
 (y,\epsilon)\mapsto (y,\hat \epsilon)  = (y,y^{-1} \epsilon),\quad y\gtrless 0,
\end{align*}
given in \eqref{chartEpshat2} and \eqref{chartEpshat}. In contrast, we obtain, following (iii) above, a local version of $\widetilde X$ in chart $\bar \epsilon=1$ by simply applying the substitution:
\begin{align*}
 (y,\epsilon)\mapsto (\hat y,\epsilon)  = (\epsilon^{-1} y,\epsilon),\quad \epsilon \ge 0,
\end{align*}
given in \eqref{yhat}, to \eqref{extended}, without doing any subsequent division of the right hand side. 
(In practice, since we are only interested in orbits, without explicit reference to time, we therefore do not worry about the specifics of the function $\psi$ in \eqref{Widetilde2} that defines the actual time transformation to get to $\widetilde X$ from $\overline X$.)  See \secref{BarY1} for further details. 



\begin{figure}[h!] 
\begin{center}
{\includegraphics[width=.9\textwidth]{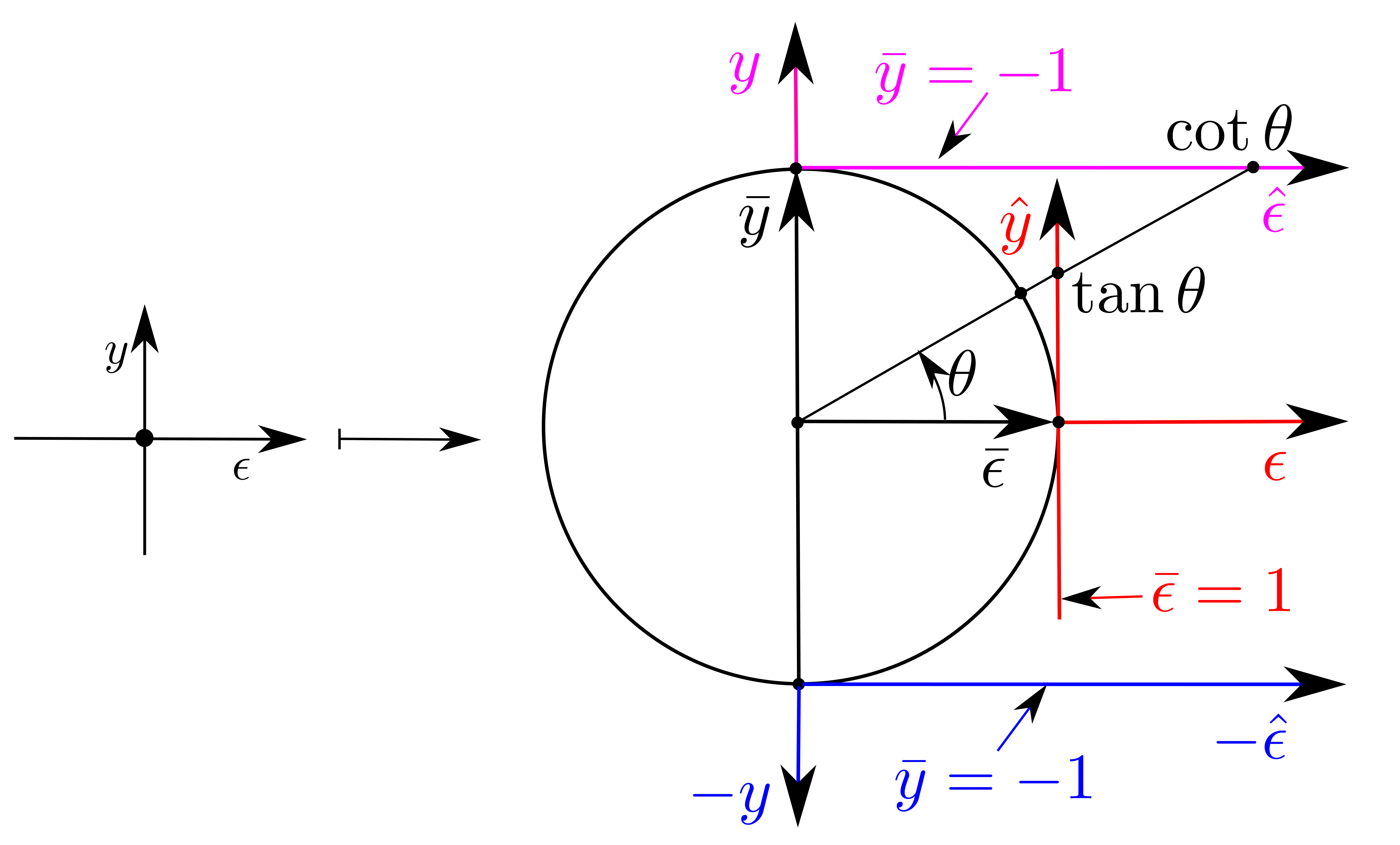}}
\end{center}
 \caption{Blowup of $y=\epsilon=0$ to the unit circle $\bar y^2+\bar \epsilon^2=1$. Charts $\bar \epsilon=1$ \eqref{yhat}, $\bar y=1$ \eqref{chartEpshat2} and $\bar y=-1$ \eqref{chartEpshat} in the blowup transformation \eqref{blowup0}. $\bar y\gtrless 0$ correspond to the south and the north semicircles, respectively. }
\figlab{InitialCharts}
\end{figure}

We follow the notation convention that all geometric objects obtained in any of these charts will be given a hat. We will often switch between charts using \eqref{hatEpshatY} but we believe it is clear from the context what variables are used. A geometric object obtained in the charts, say $\widehat M$, will be given a bar, $\overline M$, in the blowup variables $(x,z,\rho,(\bar y,\bar \epsilon))\in \mathbb R^2 \times [0,\infty)\times S^1$. Furthermore, an object, say $\widehat G$, obtained in $\bar \epsilon=1$ will often only be partially covered ($\hat y\gtrless 0$, respectively) by the charts $\bar y=\pm 1$. For simplicity, we will, however, continue to denote the subset of $\widehat G$ that is covered by the charts $\bar y=\pm 1$ by the same symbol.
Furthermore, by applying \eqref{blowup0Transformation} to an object $\overline M$, then we obtain a set, which denote by $M$, in the $(x,y,z,\epsilon)$-space. In this way, one says that $\overline M$ has been {\it blown down} to $M$.}

\section{Geometry of $\widetilde X$}\seclab{geometry}
\ed{
Now we present the $\epsilon=0$ dynamics of $\widetilde X$, obtained from the extended vector-field \eqref{extended}, with $X_\epsilon$ as in \eqref{Xeps}, using the truncated, PWL system $X^\pm(\textbf x)$ given in \bpwl{}, and the blowup and desingularization, see \eqref{blowup0}, \eqref{Widetilde1} and \eqref{Widetilde2}. 
\edNew{We recall that a smooth manifold of critical points is said to be normally hyperbolic if the linearization of any point only has as many eigenvalues with zero real part as the dimension of the tangent space. Similarly, we say that a critical point is partially hyperbolic if its linearization has hyperbolic directions. In contrast, we say that a set is fully nonhyperbolic if the linearization of any point only has eigenvalues with zero real part.} Then working in the charts $\bar \epsilon=1$ and $\bar y=\pm 1$, as $\epsilon\rightarrow 0^+$, we obtain the following result.
\begin{proposition}\proplab{WidetildeX}
 The following sets:
\begin{align*}
 \overline q:\quad &x=\rho=z=0,\,(\bar y,\bar \epsilon)\in S^1\cap \{\bar \epsilon \ge 0\},\\
 \overline l^+:\quad &x\in \mathbb R,\,\rho =z=0,\,(\bar y,\bar \epsilon)=(1,0)\in S^1,\\
 \overline l^-:\quad &z\in \mathbb R,\,x=\rho =0,\,(\bar y,\bar \epsilon)=(-1,0)\in S^1,
\end{align*}
given in the blowup variables $(x,z,\rho,(\bar y,\bar \epsilon))\in \mathbb R^2\times [0,\infty)\times S^1$, are sets of fully nonhyperbolic critical points of $\widetilde X$. The critical sets
\begin{align*}
 \overline M^+:\quad &(x,z)\in \mathbb R^2,\,\rho = 0,\,(\bar y,\bar \epsilon) = (1,0)\in S^1,\\
 \overline M^-:\quad &(x,z)\in \mathbb R^2,\,\rho = 0,\,(\bar y,\bar \epsilon) = (-1,0)\in S^1,
\end{align*}
are, on the other hand, normally hyperbolic for $x\ne 0$ and $z\ne 0$, respectively, each being of saddle-type. Also 
$$\overline M^+\cap \{z=0\}=\overline l^+, \quad \overline M^-\cap \{x=0\}=\overline l^-.$$

Furthermore, for  $(x,z)\in \breve \Sigma_{sl}^\pm$ the graphs
\begin{align*}
 \overline S_{a}:\quad & \bar \epsilon^{-1} \bar y=h(x^{-1}z),\,\quad \text{for}\quad (x,z)\in \breve \Sigma_{sl}^{-},\\
 \overline S_{r}:\quad & \bar \epsilon^{-1} \bar y=h(x^{-1}z),\,\quad \text{for}\quad (x,z)\in \breve \Sigma_{sl}^{+},
\end{align*}
are normally hyperbolic and attracting/repelling critical manifolds. The function $h:(0,\infty)\rightarrow \mathbb R$ is defined in \eqref{haFunction}.
These manifolds carry reduced, slow flows  which on $(x,z)$ coincide with the Filippov sliding flow on $\Sigma_{sl}^\pm$. 
\end{proposition}
As a corollary, $\overline S_{a}\cup \overline q \cup \overline S_{r} $ contains 
\begin{align}
 \bar \gamma^s:&\quad \bar \epsilon^{-1} \bar y = h(-\chi_-),\,(x,z) = \breve v_-s,\,s\in \mathbb R,\eqlab{barGammaWs}\\
 \bar \gamma^{w}:&\quad \bar \epsilon^{-1} \bar y = h(-\chi_+),\,(x,z) = \breve v_+s,\,s\in \mathbb R,\nonumber
\end{align}
recall \eqref{vpm} and \eqref{chipm}. 

We illustrate the geometry in \figref{Gamma0}. Upon blowing down (i.e. by applying the mapping \eqref{blowup0Transformation}) and returning to the original $(x,y,z)$-variables, $\overline q$, $\overline l^\pm$, $\overline S_{a}$, $\overline S_r$ collapse to the two-fold $q=(0,0,0)$, the fold lines $l^\pm$, and the stable and unstable sliding regions $\Sigma_{sl}^\mp$, respectively, for $\epsilon=0$. Furthermore, $\bar \gamma^s$ and $\bar \gamma^w$ in \eqref{barGammaWs} collapse to $\gamma^s$ and $\gamma^w$ in \eqsref{gammas}{gammaw}.

\begin{figure}[h!] 
\begin{center}
{\includegraphics[width=0.8\textwidth]{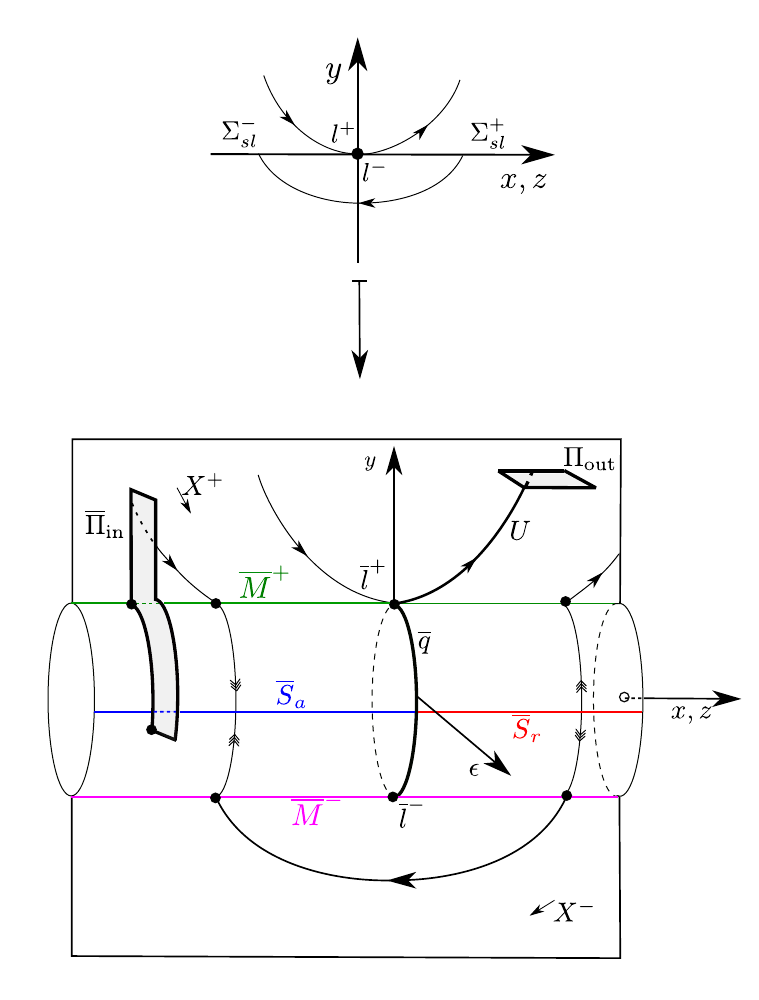}}
\end{center} 
 \caption{Schematic illustration of the blowup geometry: the nonhyperbolic critical points $\overline q$ and $\overline l^\pm$ of $\overline X$, the normally hyperbolic critical manifolds $\overline S_{a,r}$ and the blowup of the section $\Pi_{\text{\textnormal{in}},\epsilon}$ (see \secref{Piin}).}
\figlab{Gamma0}
\end{figure}

\propref{WidetildeX} is proven through a set of calculations that are easy to carry out in the three charts $\bar \epsilon=1$, see \eqref{yhat}, $\bar y=1$, see \eqref{chartEpshat2}, and $\bar y=-1$, see \eqref{chartEpshat}. 
We consider each of these charts in turn.}
 
\subsection{The chart $\bar \epsilon=1$: A slow-fast system}\seclab{barEps1}
Inserting $\hat y = \epsilon^{-1} y$, cf. \eqref{yhat}, into \brpwlf{} gives the following set of equations:
\begin{align}
x' &=\epsilon \left(\beta^{-1} c(1+\phi(\hat y))-(1-\phi(\hat y))\right),\eqlab{slowFastEquations1}\\
\hat y'&=b {z} (1+\phi(\hat y)) -\beta  x(1-\phi(\hat y)),\nonumber\\
 z'&=\epsilon\left((1+\phi(\hat y))+b^{-1} \gamma(1-\phi(\hat y))\right),\nonumber
\end{align}
in terms of the fast time $\tau$. Here $\hat y\in \mathbb R,\,\epsilon\ge 0$ and obviously $\epsilon'=0$. This is now a standard slow-fast system. The $\hat y$ variable is fast with $\mathcal O(1)$ velocities in general whereas $x$ and $z$ are slow variables with $\mathcal O(\epsilon)$ velocities. In slow-fast theory, system \eqref{slowFastEquations1} is called {\it the fast system}, whereas
\begin{align}
\dot x &=\beta^{-1} c(1+\phi(\hat y))-(1-\phi(\hat y)),\eqlab{slowFastEquations10}\\
\epsilon \dot{\hat y}&=b {z} (1+\phi(\hat y)) -\beta  x(1-\phi(\hat y)),\nonumber\\
 \dot z&=(1+\phi(\hat y))+b^{-1} \gamma(1-\phi(\hat y)),\nonumber
\end{align}
is called {\it the slow system}. 

The {\em layer problem} is given by the limiting system \eqref{slowFastEquations1}$_{\epsilon=0}$:
\begin{align}
x' &=0,\eqlab{layer}\\
\hat y'&=b {z} (1+\phi(\hat y)) -\beta  x(1-\phi(\hat y)),\nonumber\\
 z'&=0.\nonumber
\end{align}
Note that $x$ and $z$ are constant in \eqref{layer}. 

The {\em reduced problem} is given by the limiting system \eqref{slowFastEquations10}$_{\epsilon=0}$: 
\begin{align}
\dot x &=\beta^{-1} c(1+\phi(\hat y))-(1-\phi(\hat y)),\eqlab{reduced}\\
0&=b {z} (1+\phi(\hat y)) -\beta  x(1-\phi(\hat y)),\nonumber\\
 \dot z&=(1+\phi(\hat y))+b^{-1} \gamma(1-\phi(\hat y)).\nonumber
\end{align}
Note that $\hat y$ is slaved in \eqref{reduced}. 

In this chart, we have known results that are collected together in \propref{criticalManifold} as follows. 

\begin{proposition}\proplab{criticalManifold}
\cite[Theorem 5.1, Proposition 5.4]{krihog}
The critical manifold $$\widehat S_0 = \widehat S_a \cup \widehat S_r \cup \hat q$$ of \eqref{slowFastEquations1}$_{\epsilon=0}$ is a union of the smooth graphs:
 \begin{align}
 \widehat S_{a}:\quad &\hat y=h(x^{-1}z),\,\quad \text{for}\quad (x,z)\in \breve \Sigma_{sl}^{-},\eqlab{h0Function}\\
 \widehat S_{r}:\quad &\hat y=h(x^{-1}z),\,\quad \text{for}\quad (x,z)\in \breve \Sigma_{sl}^{+},\nonumber
 \end{align}
 and the line:
 \begin{align}
\hat q & = \{(x,\hat y,z)\in \mathbb R^3 \vert \hat y\in \mathbb R,\,x=0,\,z=0\}.\eqlab{hatq}
 \end{align}
  On $\widehat S_{a,r}$ the motion of the slow variables $x$ and $z$ is described by the reduced problem \eqref{reduced} which coincides with the sliding equations \eqref{slidingEqns}. Also $\widehat S_{a,r}$ are both normally hyperbolic, $\widehat S_a$ being attracting, $\widehat S_r$ being repelling while $\hat q$ is fully nonhyperbolic. 
\end{proposition}
\begin{proof}
 We obtain the set $\widehat S_0$ as the set of equilibria of \eqref{layer}. Hyperbolicity is determined by linearization of \eqref{layer}. The remainder of the proof then follows from simple calculations. In particular, the linearization about any point in $\hat q$ has only zero as an eigenvalue.
\end{proof}
\ed{We sketch the critical manifold $\widehat S_0$ in \figref{SaSrpNew}. By Fenichel's theory \cite{fen1,fen2}, we have the following:
\begin{proposition}\proplab{Fenichel}
Consider any compact submanifold (with boundary) of the normally hyperbolic critical manifold $\widehat S_{a}$ ($\widehat S_r$) in the $(x,\hat y,z)$-space. Then this invariant sub-manifold perturbs $\mathcal O(\epsilon)$-smoothly to a locally invariant \textnormal{slow} manifold $\widehat S_{a,\epsilon}$ ($\widehat S_{r,\epsilon}$) for $0<\epsilon \ll 1$ sufficiently small. $\widehat S_{a,\epsilon}$ ($\widehat S_{r,\epsilon}$) carries a slow flow that is smoothly $\mathcal O(\epsilon)$-close to the sliding equations. 
\end{proposition}
Note that we cannot control $\widehat S_{a,\epsilon}$ and $\widehat S_{r,\epsilon}$ close to $\hat q$ by Fenichel's theory. To do so we need to apply a separate blowup (see \eqref{blowup1} below). 

\begin{figure}[h!] 
\begin{center}
{\includegraphics[width=\textwidth]{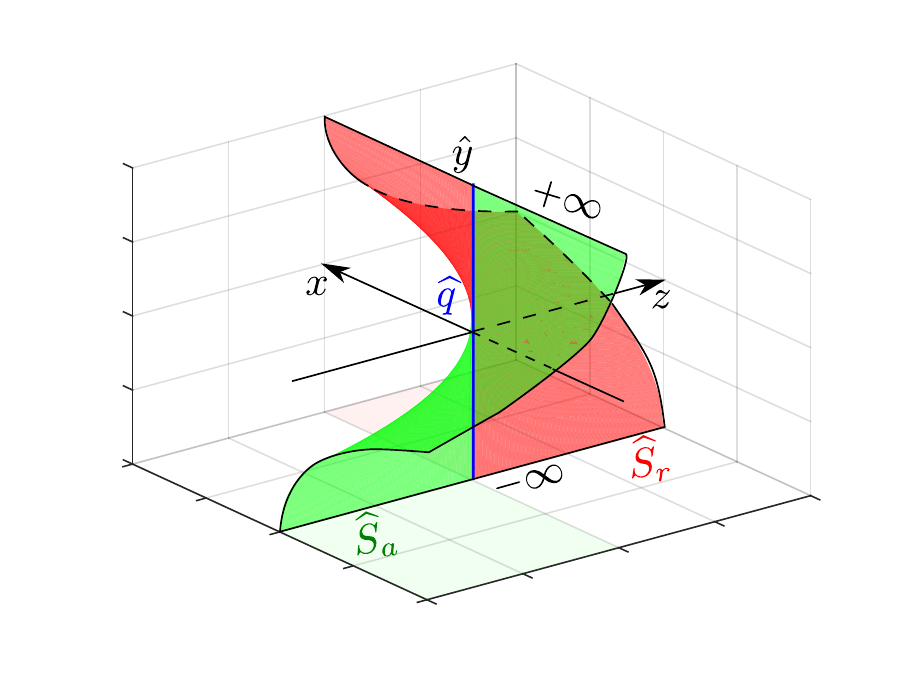}}
\end{center} 
 \caption{\ed{Sketch of the critical manifold $\widehat S_0$. The surfaces $\widehat S_{a}$ and $\widehat S_r$ are graphs over $\Sigma_{sl}^-$ and $\Sigma_{sl}^+$, respectively. But for fixed $z\ne 0$ and $x\rightarrow 0$ then $\hat y\rightarrow -\infty$ on each surface $\widehat S_{a}$ or $\widehat S_r$. Similarly, for fixed $x\ne 0$ and $z\rightarrow 0$ then $\hat y\rightarrow \infty$ on each surface $\widehat S_{a}$  or $\widehat S_r$. 
 We obtain the line $\hat q$ of fully nonhyperbolic critical points as the intersection of the closure of the two sub-manifolds $\widehat S_{a}$ and $\widehat S_r$}.}
\figlab{SaSrpNew}
\end{figure}


The objects $\hat q$, $\widehat S_{a}$ and $\widehat S_r$ above are all subsets of $(x,\hat y,z)$-space. But since the blowup \eqref{blowup0} works on the extended space $(x,\hat y,z,\epsilon)$, we will henceforth also view them as $\epsilon=0$ sections of this extended space $(x,\hat y,z,\epsilon)$. For example, we will have to follow this viewpoint in the charts $\bar y=\pm 1$. For simplicity, we will use the same symbol for the objects in the extended space (e.g. ``$\hat q=\hat q\times\{0\}$''). Notice also that the $2D$ slow manifolds $\widehat S_{a,\epsilon}$ and $\widehat S_{r,\epsilon}$ are $\epsilon=\text{const}.$ sections of $3D$ center-like manifold in the extended space $(x,\hat y,z,\epsilon)$ (see \lemmaref{lines} where we extend the slow manifolds through this viewpoint, following \cite{krupa_extending_2001}). }

As mentioned above, although $\hat q$, $\widehat S_{a}$ and $\widehat S_{r}$ are only partially covered by the charts $\bar y=\pm 1$ we will nevertheless denote the subset of the objects that are covered by the same symbols.

\subsection{The chart $\bar y=1$}\seclab{BarY1}
Inserting \eqref{chartEpshat2} into \eqref{extended}, we obtain the following equations, using \eqref{phipm}:
\begin{align}
x'&= \hat \epsilon y (\beta^{-1} c(1+\phi_+(\hat \epsilon))-(1-\phi_+(\hat \epsilon))),\nonumber\\
y'&= \hat \epsilon y (b {z} (1+\phi_+(\hat \epsilon)) -\beta  x(1-\phi_+(\hat \epsilon))),\nonumber\\
 z'&=\hat \epsilon y  ((1+\phi_+(\hat \epsilon ))+b^{-1} \gamma(1-\phi_+(\hat \epsilon ))),\nonumber\\
\hat \epsilon' & =-\hat \epsilon^2 (b {z} (1+\phi_+(\hat \epsilon)) -\beta  x(1-\phi_+(\hat \epsilon))),\nonumber
\end{align}
where $(y,\hat \epsilon) \in [0,\infty)^2$. \ed{Recall definition of $\phi_+$ in \eqref{phipm}. This system is the local form of $2\overline X$. Now, in agreement with \eqref{Widetilde1}, we see that $\hat \epsilon = \bar y^{-1}\bar \epsilon$ is a common factor of these equations. To study $\widetilde X$ in this chart, we therefore divide out this common factor by rescaling time to} obtain the following system
\begin{align}
x'&= y (\beta^{-1} c(1+\phi_+(\hat \epsilon))-(1-\phi_+(\hat \epsilon))),\eqlab{xyzhatEps0Pos}\\
y'&= y (b {z} (1+\phi_+(\hat \epsilon)) -\beta  x(1-\phi_+(\hat \epsilon))),\nonumber\\
 z'&=y  ((1+\phi_+(\hat \epsilon ))+b^{-1} \gamma(1-\phi_+(\hat \epsilon ))),\nonumber\\
\hat \epsilon' & =-\hat \epsilon (b {z} (1+\phi_+(\hat \epsilon)) -\beta  x(1-\phi_+(\hat \epsilon))),\nonumber
\end{align}
that we study in the sequel.  
\begin{remark}
The two sets $\{y=0\}$ and $\{\hat \epsilon=0\}$ are each invariant for \eqref{xyzhatEps0Pos}.
Within $\{\hat \epsilon=0\}$ we recover the vector-field $X^+$ of \bpwl{}$_{y>0}$ from  \eqref{xyzhatEps0Pos}:
\begin{align}
 \dot x &=\beta^{-1} c,\eqlab{XPnew}\\
 \dot y&=bz,\nonumber\\
 \dot z &=1,\nonumber
\end{align}
after further division of the right hand side by $2y> 0$, using $\phi_+(0)=1$.

Within $\{y=0\}$ we have $x'=z'=0$ and
\begin{align*}
 \hat \epsilon' & =-\hat \epsilon (b {z} (1+\phi_+(\hat \epsilon)) -\beta  x(1-\phi_+(\hat \epsilon))),
\end{align*}
in agreement with the layer problem \eqref{layer}, using \eqref{hatEpshatY} and \eqref{phipm}. 
In particular, the set defined by $b {z} (1+\phi_+(\hat \epsilon)) -\beta  x(1-\phi_+(\hat \epsilon))=0$ coincides with $\widehat S_{0}\cap \{\hat y>0\}$ of \propref{criticalManifold} under the coordinate transformation \eqref{hatEpshatY}, having the same hyperbolicity properties. Here 
\begin{align}
 \hat q:\,x=z=y=0,\, \hat \epsilon\ge 0,\eqlab{newq}
\end{align}
 is a line of critical points of the layer problem \eqref{layer}. The smooth graphs $\widehat S_{a,r}$ are now given by
\begin{align*}
 \widehat S_{a,r}:\, \hat \epsilon = h_+(x^{-1}z),\,\quad (x,y,z)\in \Sigma_{sl}^\pm \cap \{x^{-1}z \in (0,b^{-1} \beta)\},
\end{align*}
respectively, 
where $h_+:(0,b^{-1}\beta)\rightarrow [0,\infty)$ is defined by
\begin{align*}
  h_+(s)= \phi_+^{-1}\left(\frac{1-\beta^{-1} b s}{1+\beta^{-1} b s}\right).
\end{align*}
Notice that 
\begin{align*}
 h_+(0) =0,\quad h_+'(0) = \beta^{-1}\pi b.
\end{align*}
\end{remark}

In the chart $\bar y=1$, along the intersection $\{y=\hat \epsilon=0\}$ of the invariant sets $\{y=0\}$ and $\{\hat \epsilon=0\}$, we obtain the following.
\begin{lemma}\lemmalab{hatMP}
 The set $\widehat M^+ \equiv \{y=\hat \epsilon =0\}$ is a set of critical points of \eqref{xyzhatEps0Pos}. It is of saddle-type for $z\ne 0$: The linearization about any point in $\widehat M^+$ has only two non-trivial eigenvalues $\pm 2bz$ with associated eigenvectors
 \begin{align*}
  \begin{pmatrix}
   \beta^{-1} c\\ 
bz\\ 
1\\
0
  \end{pmatrix},\begin{pmatrix}
0\\
0\\
0\\
1
  \end{pmatrix},
 \end{align*}
respectively. The line 
\begin{align}
\hat l^+=\widehat M^+\cap \{z=0\},\eqlab{hatlPlus}
\end{align}
is fully nonhyperbolic: The linearization about any point in $\hat l^+$ has only zero eigenvalues. It becomes $l^+$ upon returning to the $(x,y,z)$-variables.
\end{lemma}
\begin{proof}
 Simple calculations.
\end{proof}

The details for $\bar y=-1$ are similar to those in $\bar y=1$ and are therefore postponed to \appref{appHALLO}. This completes the proof of \propref{WidetildeX}.

\subsection{The blowup of the section $\Pi_{\text{\textnormal{in}},\epsilon}$}\seclab{Piin}
\ed{

Now, as with $\hat q$, $\widehat S_a$, etc. above in the chart $\bar \epsilon=1$, the plane $\Pi_{\text{\textnormal{in}},\epsilon}$ becomes a $\epsilon=\text{const}.\in (0,\epsilon_0]$ section of 
\begin{align*}
 \Pi_{\textnormal{in}} \equiv \{(x,y,z,\epsilon)\in \mathbb R^4 \vert (x,y,z)\in \Pi_{\text{in},\epsilon},\,\epsilon \in (0,\epsilon_0]\},
\end{align*}
with $0<\epsilon_0\ll 1$. Upon blowup \eqref{blowup0}, $\Pi_{\text{in}}$ becomes $\overline{\Pi}_{\textnormal{in}} \subset \overline P$ which in \figref{Gamma0} extends from $(\bar y,\bar \epsilon) = (1,0)$ down to include a small neighborhood of the critical manifold at $\bar \epsilon^{-1} \bar y = h(-\delta^{-1} z)$ within $x=-\delta$. Notice that this is always possible by taking $\omega$ sufficiently close to $\omega_0$ in $R_{\text{in},\epsilon}$ since $\dot x>0$ on $\overline S_{a}$ for $x=-\delta$ and $z\in I_{\text{in}}$, see \remref{xdotNeg}.} 
\section{Proof of \thmref{mainThm}}\seclab{proof}

 

We shall now prove \thmref{mainThm}. We work with the desingularized vector-field $\widetilde X$ in the phase space 
\begin{align*}
 \overline P = \left\{(x,z,\rho,(\bar y,\bar \epsilon))\in\mathbb R^2 \times [0,\infty)\times S^1\right\}.
\end{align*}
Here
\begin{align}
 \bar q=\left\{x=z=\rho=0,\,(\bar y,\bar \epsilon)\in S^1\right\},\eqlab{barp}
\end{align}
is the circle of nonhyperbolic critical points of $\widetilde X$ illustrated in \figref{Gamma0}. Recall also \propref{WidetildeX}. Now, we perform a further blowup of this circle by applying the method of Dumortier and Roussarie \cite{dumortier_1991,dumortier_1993,dumortier_1996} (see also \cite{gomory1955a,takens1974a}),  in the formulation of Krupa and Szmolyan \cite{krupa_extending_2001, krupa_extending2_2001} for singular perturbed problems. 
\subsection{Blowup of $\bar q$}
We apply the following blowup transformation 
\begin{align}
(x,\rho,z,(\bar y,\bar \epsilon))\mapsto (r,(\bar x,\bar \rho,\bar z),(\bar y,\bar \epsilon)),\eqlab{blowupTransformation}
\end{align}
defined by
\begin{align}
 x = r \bar x,\,\rho = r^2 \bar \rho,\,z=r\bar z,\quad r\ge 0, (\bar x,\bar \rho,\bar z)\in S^2=\{\bar x^2+\bar \rho^2+\bar z^2=1\}.\eqlab{blowup1}
\end{align} 
Putting this together with \eqref{blowup0} we have
\begin{align*}
 (r,(\bar x,\bar \rho,\bar z),(\bar y,\bar \epsilon)) \mapsto (x,y,z,\epsilon),
\end{align*}
defined by
\begin{align*}
 x = r\bar x,\,y=r^2 \bar \rho \bar y,\,z=r\bar z,\,\epsilon = r^2 \bar \rho\bar \epsilon,\quad r \ge 0,\,(\bar x,\bar \rho,\bar z)\in S^2,\,(\bar y,\bar \epsilon)\in S^1.
\end{align*}
The transformation \eqref{blowupTransformation} blows up $\bar q$, the circle of nonhyperbolic critical points \eqref{barp}, to $ \overline{\overline q}$, a circle of spheres $S^1\times S^2$: 
\begin{align}\eqlab{barbarp}
 \overline{\overline q}:\quad r=0,\,(\bar x,\bar \rho,\bar z)\in S^2,(\bar y,\bar \epsilon)\in S^1,
\end{align}
as illustrated in \figref{Gamma0barq}. The double-bar indicates that the two-fold $q$ has been blown up twice. 
Henceforth we consider the following phase space
\begin{align*}
 \overline{\overline P}=\left\{(r,(\bar x,\bar \rho,\bar z),(\bar y,\bar \epsilon))\in [0,\infty)\times S^2 \times S^1\right\}.
\end{align*}
\eqref{blowupTransformation} {\em pulls back} $\widetilde X$ on $\overline P$ to a vector-field $\overline{\widetilde X}$ on $\overline{\overline{P}}$. Here $\overline{\widetilde{X}}\vert_{r=0}=0$. However, the exponents of $r$ in \eqref{blowup1} have been chosen so that 
\begin{align*}
 \widetilde{\widetilde {X}} = r^{-1}\overline{\widetilde{X}},
\end{align*}
is well-defined and non-trivial, following \cite{krupa_extending_2001}.  It is $\widetilde{\widetilde X}$ that we study in the sequel. 

\begin{figure}[h!] 
\begin{center}
{\includegraphics[width=.8\textwidth]{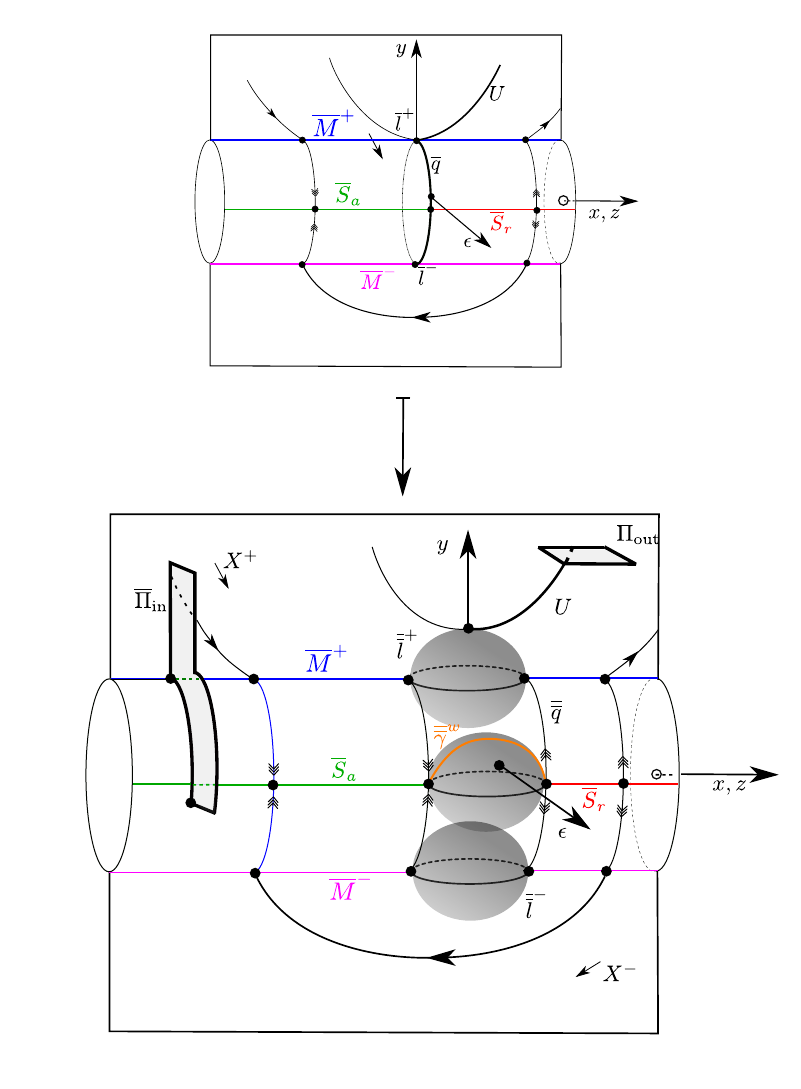}}
\end{center}
 \caption{Schematic illustration of the blowup of $\bar q$, a circle of nonhyperbolic critical points defined in \eqref{barp}, to $\overline{\overline q}$, a circle of spheres $S^1\times S^2$ defined in \eqref{barbarp}.}
\figlab{Gamma0barq}
\end{figure}

To describe $\overline{\overline{P}}$ and $\widetilde{\widetilde{X}}$ we use the charts $\bar \epsilon =1$ \eqref{yhat}, $\bar y = 1$ \eqref{chartEpshat2} and $\bar y=-1$  \eqref{chartEpshat}. This covers the relevant part of the circle $(\bar y,\bar \epsilon)\in S^1$ with $\bar \epsilon\ge 0$. Notice that in each of the charts $\bar \epsilon =1$, $\bar y = 1$ and $\bar y=-1$, we obtain subsets of $\overline{\overline q}$ as a cylinder of spheres $\overline{\widehat q}$, see also \figref{barEpsEq1Charts}. In $\bar \epsilon=1$ this cylinder $\overline{\widehat q}$ is double infinite $\hat y\in \mathbb R$. Only $\bar y=\pm 1$ is not covered in this chart. In each of the charts $\bar y=\pm 1$, $\overline{\widehat q}$ is infinite $\hat \epsilon \in [0,\infty)$ and $\hat \epsilon \in (-\infty,0]$, respectively. To parametrise and cover the relevant part of the sphere $(\bar x,\bar \rho,\bar \epsilon)\in S^2$, we use the following directional charts
\begin{align*}
 \kappa_1:\quad (r_1,\rho_1,z_1)\mapsto \left\{\begin{array} {cc} x &= -r_1\\
 \rho &= r_1^2 \rho_1\\
 z&=r_1 z_1
 \end{array}\right.,\\
 \kappa_2:\quad  (r_2,x_2,z_2) \mapsto \left\{\begin{array} {cc}  x &= r_2x_2\\
 \rho &= r_2^2\\
 z&=r_2 z_2
 \end{array}\right.,\\
  \kappa_3:\quad (r_3,\rho_3,z_3)\mapsto \left\{\begin{array} {cc} x &= r_3\\
 \rho &= r_3^2 \rho_3\\
 z&=r_3 z_3
 \end{array}\right.,
\end{align*}
for $r_i\ge 0$,
obtained by setting $\bar x=-1$, $\bar \rho=1$, and $\bar x=1$, respectively, in \eqref{blowup1}, as suggested by \cite{krupa_extending_2001}.

Within chart $\bar \epsilon =1$ \eqref{yhat} where $y=\epsilon \hat y,\,\epsilon\ge 0,\,\hat y\in \mathbb R$ the charts $\kappa_i$ become 
\begin{align}
 (\bar \epsilon=1,\kappa_1):\quad  (r_1,\hat y,z_1,\epsilon_1) &\mapsto \left\{ \begin{array}{cc} x &= -r_1 \\
                                                                               y &=r_1^2\epsilon_1 \hat y\\
                                                                               z &=r_1 z_1\\
                                                                               \epsilon &=r_1^2\epsilon_1
                                                                              \end{array}\right.,\quad r_1\ge 0,\,\epsilon_1\ge 0,\eqlab{kappa1} \\
 (\bar \epsilon=1,\kappa_2):\quad (r_2,x_2,\hat y,z_2)&\mapsto \left\{ \begin{array}{cc}  
 x &= r_2x_2\\
 y &= r_2^2 \hat y\\
 z &=r_2 z_2\\
 \epsilon &= r_2^2
 \end{array}\right.,\quad r_2\ge 0, \eqlab{kappa2}\\
 (\bar \epsilon=1,\kappa_3):\quad (r_3,\hat y,z_3,\epsilon_3) &\mapsto
 \left\{ \begin{array}{cc}  x&= r_3\\
          y &=r_3^2\epsilon_3 \hat y\\
          z &=r_3 z_3\\
          \epsilon &=r_3^2\epsilon_3,
         \end{array}\right.,\quad r_3\ge 0,\,\epsilon_3\ge 0.\eqlab{kappa3}
\end{align}
We refer to these charts as $(\bar \epsilon=1,\kappa_i)$, $i=1,2,3$. In \cite{krihog}, following the general terminology in \cite{krupa_extending_2001}, we referred to these charts as the {\it entry chart}, the {\it scaling chart} (since $r_2=\sqrt{\epsilon}$), and the {\it exit chart}, respectively.  When the charts overlap we can change coordinates as follows:
\begin{align}
 (\bar \epsilon=1,\kappa_1) &\rightarrow (\bar \epsilon=1,\kappa_2):\quad x_2=-1/\sqrt{\epsilon_1},\,r_2=r_1\sqrt{\epsilon_1},\,z_2=z_1/\sqrt{\epsilon_1},\eqlab{kappa21}\\
 (\bar \epsilon=1,\kappa_2)&\rightarrow(\bar \epsilon=1,\kappa_3):\quad r_3 = r_2 x_2,\,z_3 = x_2^{-1} z_2,\,\epsilon_3 = x_2^{-2},\eqlab{kappa32}
\end{align}
defined for $\epsilon_1>0$ and $x_2>0$, respectively. Their inverses can easily be computed from these expressions. Notice that $(\bar \epsilon=1,\kappa_1)$ and $(\bar \epsilon=1,\kappa_3)$ cannot overlap.

Similarly in charts $\bar y = \pm1 $, given by \eqsref{chartEpshat2}{chartEpshat} where $\epsilon = y\hat \epsilon$, $(y,\hat \epsilon)\in \overline{\mathbb R}_{\pm}$, respectively, we obtain
\begin{align}
 (\bar y=\pm 1,\kappa_1):\quad (r_1,y_1,z_1,\hat \epsilon) &\mapsto \left\{\begin{array}{cc} x &= -r_1\\
                                                                            y&=r_1^2y_1\\
                                                                            z&=r_1z_1\\
                                                                            \epsilon &=r_1^2y_1 \hat \epsilon
                                                                           \end{array}\right.,\quad r_1 \ge 0,\,y_1\gtreqless 0,\eqlab{K1New}\\
 (\bar y=\pm 1,\kappa_2):\quad (r_2,x_2,z_2,\hat \epsilon,) &\mapsto \left\{\begin{array}{cc} x &= r_2x_2\\
                                                                            y&=\pm r_2^2\\
                                                                            z &=r_2z_2\\
                                                                            \epsilon &=\pm r_2^2 \hat \epsilon,
                                                                           \end{array}\right.,\quad r_2 \ge 0,\eqlab{K2New}\\
 (\bar y=\pm 1,\kappa_3):\quad (r_2,x_2,z_2,\hat \epsilon) &\mapsto \left\{\begin{array}{cc} x &= r_3\\ 
                                                                            y &= r_3^2y_3 \\ z&=r_3 z_3\\ \epsilon & = r_3^2y_3 \hat \epsilon
                                                                           \end{array}\right., \quad r_3\ge 0,\,y_3\gtreqless 0\eqlab{K3New}
\end{align}
referring to these chart as $(\bar y=\pm 1,\kappa_i)$, $i=1,2,3$, henceforth\footnote{Unfortunately, the coordinates $r_2,\,x_2,\,z_2$ in \eqref{K2New} are different from the same symbols used in \eqref{kappa2}. But since we never need to go from $(\bar \epsilon=1,\kappa_2)$ to $(\bar y=\pm 1,\kappa_2)$ we believe confusion should not occur.}. 
The coordinate changes between the charts 
are given by
\begin{align}
(\bar y=\pm 1,\kappa_1) &\rightarrow (\bar y=\pm 1,\kappa_2) :\quad x_2=-1/\sqrt{\pm y_1},\,r_2=r_1\sqrt{\pm y_1},\,z_2=z_1/\sqrt{\pm y_1},\eqlab{kappa21New}\\
 (\bar y=\pm 1,\kappa_2) &\rightarrow (\bar y=\pm 1,\kappa_3):\quad r_3 = r_2 x_2,\,z_3 = x_2^{-1} z_2,\, y_3 = \pm x_2^{-2},\eqlab{kappa32New}
\end{align}
defined for $y_1\gtrless 0$ and $x_2>0$, respectively, and their inverses. 

Notice that the coordinate change from $(\bar \epsilon=1,\kappa_i)$ to $(\bar y=\pm 1,\kappa_i)$, for either $i=1$ or $i=3$, is obtained from
\begin{align}
y_1 = \hat y\epsilon_1,\quad y_3 = \hat y\epsilon_3,\quad \hat \epsilon = \hat y^{-1}.\eqlab{cc}
\end{align}
Fixing appropriate compact subsets of the coordinates in each chart, these charts then completely cover the relevant part of the cylinder $\overline{\overline q}\cap \{\bar \rho\ge 0,\,\bar \epsilon\ge 0,\,\bar x\ne 0\}$. 

We illustrate the $(\bar \epsilon=1,\kappa_i)$-charts and the $(\bar y=1,\kappa_i)$-charts in \figref{barEpsEq1Charts} (a) and (b), respectively.  The $(\bar y=-1,\kappa_i)$ chart is just a reflection of  the $(\bar y=1,\kappa_i)$ chart.
\begin{figure}[h!] 
\begin{center}
\subfigure[]{\includegraphics[width=.495\textwidth]{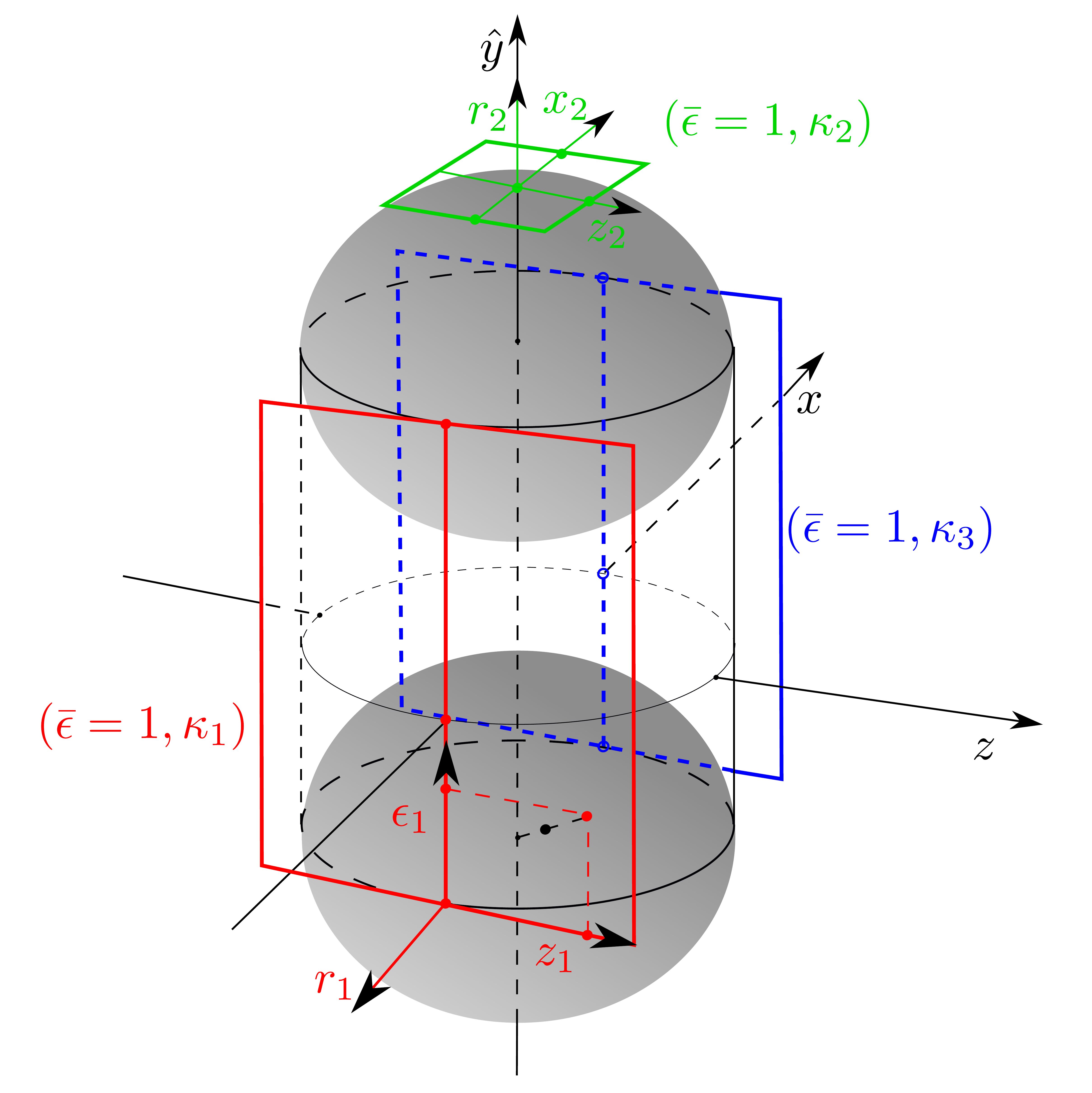}}
\subfigure[]{\includegraphics[width=.495\textwidth]{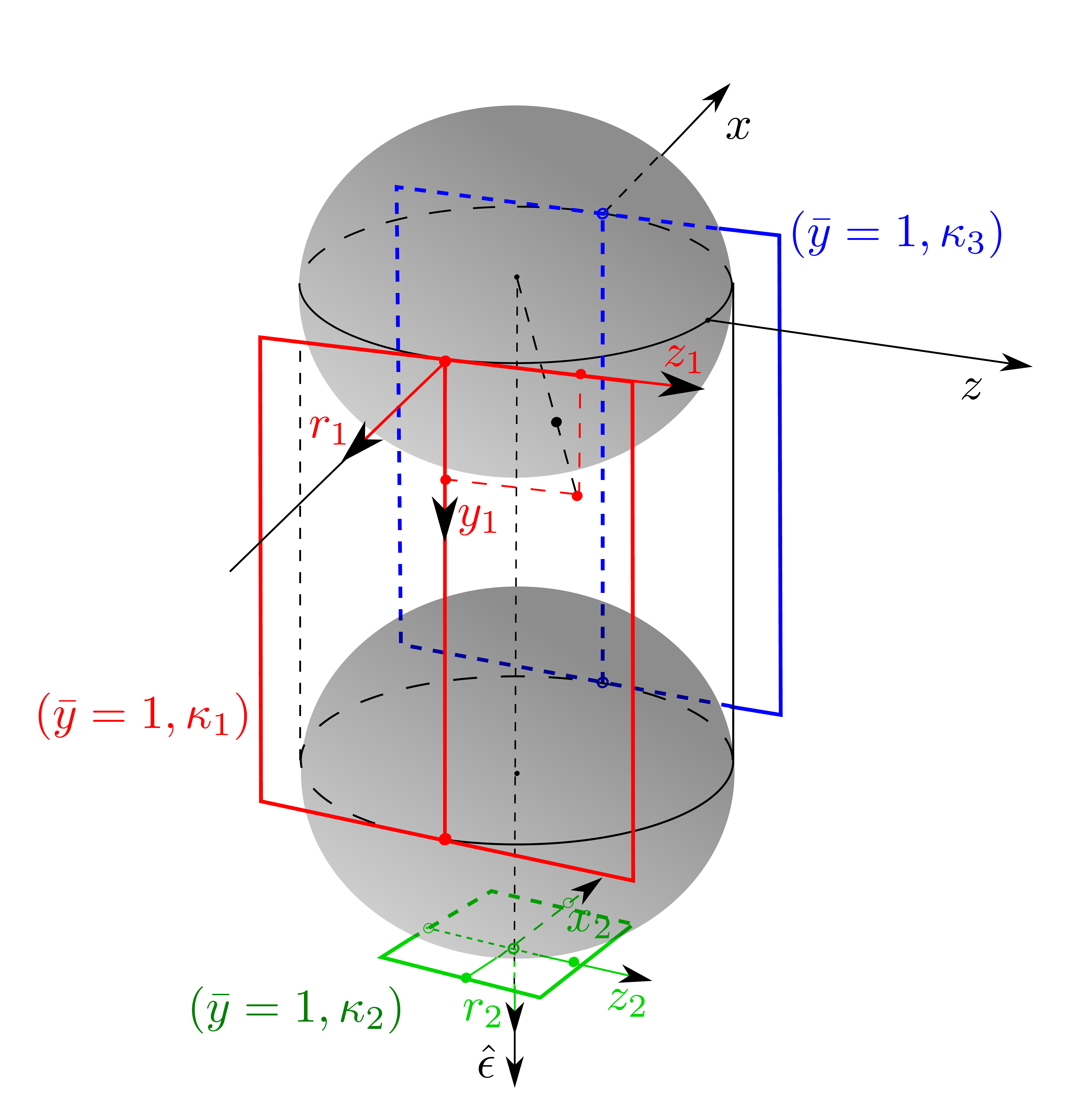}}
\end{center}
 \caption{\ed{(a): Illustration of the charts $(\bar \epsilon=1,\kappa_i)$, see \eqref{kappa1}, \eqref{kappa2} and \eqref{kappa3} and the associated coordinates. The \textit{entry chart} $(\bar \epsilon=1,\kappa_1)$, with coordinates $(\hat y,r_1,z_1,\epsilon_1)$, covers $\bar x<0$ of the spheres $\{\hat y=\text{const}.\in \mathbb R\}$ whereas the \text{scaling chart} $(\bar \epsilon=1,\kappa_2)$, with the coordinates $(x_2,\hat y,z_2,r_2)$, and the \text{exit chart} $(\bar \epsilon=1,\kappa_3)$, with the coordinates $(\hat y,r_3,z_3,\epsilon_3)$ (not shown but similar to $(r_1,\hat y,z_1,\epsilon_1)$), cover $\bar \epsilon>0$ and $\bar x>0$, respectively. (b): Illustration of the charts $(\bar y=1,\kappa_i)$, see \eqref{K1New}, \eqref{K2New} and \eqref{K3New} and the associated coordinates. The chart $(\bar y=1,\kappa_1)$, with coordinates $(r_1,y_1,z_1,\hat \epsilon)$, covers $\bar x<0$ of the spheres $\{\hat \epsilon=\text{const}.\in [0,\infty)\}$ whereas the chart $(\bar \epsilon=1,\kappa_2)$, with coordinates $(r_2,x_2,z_2,\hat \epsilon)$, and the chart $(\bar \epsilon=1,\kappa_3)$, with coordinates $(r_3,y_3,z_3,\hat \epsilon)$ (not shown but similar to $(r_1,y_1,z_1,\hat \epsilon)$),  cover $\bar \epsilon>0$ and $\bar x>0$, respectively}. } 
\figlab{barEpsEq1Charts}
\end{figure}

Note that we follow the (standard) convention that a geometric object obtained in $(\bar \epsilon=1,\kappa_i)$ or $(\bar y=\pm 1,\kappa_i)$ will be given a hat and a subscript $i$. Such an object, say $\widehat G_i$, will be denoted by $\overline{\widehat G}$ in the blowup variables \eqref{blowup1} of either of the charts $\bar \epsilon=1,\,\bar y=\pm 1$ used to cover $(\bar y,\bar \epsilon)\in S^1$ (each of these spaces are illustrated in \figref{barEpsEq1Charts}). In the full blowup space $\overline{\overline P}$, this object will be denoted by $\overline{\overline G}$. As above, objects, say $\widehat G_i$, obtained in $(\bar \epsilon=1,\kappa_i)$ will frequently only be partially covered ($\hat y\gtrless 0$, respectively) by the charts $(\bar y=\pm 1,\kappa_i)$. For simplicity, we will, however, (again) continue to denote the subset of $\widehat G_i$ that is covered by the charts $(\bar y=\pm 1,\kappa_i)$ using the same symbol.

\ed{We now present the following result (compare with \propref{WidetildeX}):
\begin{proposition}\proplab{WidetildeWidetildeX}
The following sets:
\begin{align*}
 \overline {\overline S}_a:\quad \bar \epsilon^{-1} \bar y = h(\bar x^{-1}\bar z),\,\bar \rho = 0,\,r\ge 0,\,\bar x<0,\,\bar z<0,\\
 \overline {\overline S}_r:\quad \bar \epsilon^{-1} \bar y = h(\bar x^{-1}\bar z),\,\bar \rho = 0,\,r\ge 0,\,\bar x>0,\,\bar z>0,
\end{align*}
obtained in the blowup variables $(x,z,\rho,(\bar y,\bar \epsilon))\in \mathbb R^2\times [0,\infty)\times S^1$, are normally hyperbolic (even for $r\ge 0$) critical manifolds of $\widetilde{\widetilde X}$. In particular, the sets on $\overline{\overline q}$:
\begin{align*}
 \overline{\overline L}_a&=  \overline {\overline S}_a\cap \{r=0\},\\
 \overline{\overline L}_r&=  \overline {\overline S}_r\cap \{r=0\},
\end{align*}
are partially hyperbolic: The linearization about any point in $\overline{\overline L}_a$ ($\overline{\overline L}_r$) has one single non-zero (negative/positive, respectively) eigenvalue.

Similarly,
\begin{align*}
\overline {\overline M}^+:\quad &(\bar y,\bar \epsilon) = (1,0),\,(\bar x,\bar z)\in S^1,\,r\ge 0,\,\bar \rho=0,\\
\overline {\overline M}^-:\quad &(\bar y,\bar \epsilon) = (-1,0),\,(\bar x,\bar z)\in S^1,\,r\ge 0,\,\bar \rho=0,
\end{align*}
are sets of normally hyperbolic (even for $r=0$) critical points for $\bar x\ne 0$ and $\bar z\ne 0$, respectively, each being of saddle-type. However, 
\begin{align*}
\overline {\overline l}^+ &= \overline {\overline M}^+\cap \{\bar z=0\}, \\
\overline {\overline l}^- &=\overline {\overline M}^-\cap \{\bar z=0\},
\end{align*}
are sets of fully nonhyperbolic critical points. 
\end{proposition}
\begin{proof}
The result follows from computations done in the charts below. 
\end{proof}}

On the blowup of $\bar q$, we therefore have (through the partial hyperbolicity of $\overline{\overline L}_{a,r}$) gained hyperbolicity for the vector field $\widetilde{\widetilde X}$. In \cite{krihog} we used this to extend the Fenichel's slow manifold using center manifold theory. We review these results in the following section.

\subsection{Slow manifolds and results from \cite{krihog}}\seclab{sec:slow}
\ed{We now use the blowup \eqref{blowup1} to extend the slow manifold $\widehat S_{a,\epsilon}$ ($\widehat S_{r,\epsilon}$) in \propref{Fenichel} up close to the fold. Technically we do this by working in the chart $(\bar \epsilon=1,\kappa_1)$ ($(\bar \epsilon=1,\kappa_3)$). But to ease the comparison with \propref{Fenichel}, we blow down the result and present the following Lemma in the $(x,\hat y,z)$-variables.
\begin{lemma}\lemmalab{lines}
Consider $\mu>0$ sufficiently small. Then there exist 
unique slow manifolds $\widehat S_{a,\epsilon}$ and $\widehat S_{r,\epsilon}$ in the $(x,\hat y,z)$-space that extend as perturbations of $\widehat S_{a}$ and $\widehat S_r$ up to $x = -\mu^{-1}\sqrt{\epsilon}$ and $x = \mu^{-1}\sqrt{\epsilon}$, respectively, for all $0<\epsilon\ll 1$, in the following way: 

Let $I_a\subset \mathbb R_-,\,I_r\subset \mathbb R_+$ be suitably large intervals and fix $k>0$. 
Then, $\widehat S_{a,\epsilon}$ is the section $\epsilon=\textnormal{const}.$ of the following, locally invariant, $3D$-surface in the extended space $(x,\hat y,z,\epsilon)$-space obtained as the image of the embedding: 
\begin{align}
[0,k] \times [0,\mu^2] \times I_a\ni  (r_1,\epsilon_1,z_1)\mapsto \left\{\begin{array}{cc} 
x &\hspace{-6.75cm}{=-r_1}\\
   \hat y &= h(-z_1)+\epsilon_1 b (\chi_+-z_1)(\chi_--z_1) m(z_1,\epsilon_1)\\
  z & \hspace{-6.75cm}{=r_1 z_1}\\
  \epsilon & \hspace{-6.75cm}{=r_1^2 \epsilon_1}
  \end{array}\right.,
  \eqlab{Saeps1}
 \end{align}
with $m(\cdot , \cdot)$ smooth.

Similarly, $\widehat S_{r,\epsilon}$ is the section $\epsilon=\textnormal{const}.$ of the following, locally invariant, $3D$-surface in the $(x,\hat y,z,\epsilon)$-space
obtained as the image of the embedding: 
\begin{align}
[0,k] \times [0,\mu^2] \times I_r\ni  (r_3,\epsilon_3,z_3)\mapsto \left\{\begin{array}{cc} 
x &\hspace{-6.75cm}{=r_3}\\
   \hat y &= h(z_3)+\epsilon_1 b (\chi_++z_3)(\chi_-+z_3) m(-z_3,\epsilon_1)\\
  z & \hspace{-6.75cm}{=r_3 z_3}\\
  \epsilon & \hspace{-6.75cm}{=r_3^2 \epsilon_3}
  \end{array}\right..
  \eqlab{Sreps1}
 \end{align}

Let $\widehat S_{a,\epsilon}^*$ ($\widehat S_{r,\epsilon}^{-*}$) denote the forward (backward) flow of $\widehat S_{a,\epsilon}$ ($\widehat S_{r,\epsilon}$). Then $\widehat S_{a,\epsilon}^*$ and $\widehat S_{r,\epsilon}^{-*}$ intersect transversally along the invariant line:
\begin{align}
 \hat{\gamma}^s:&\quad \hat y = h(-\chi_-),\, (x,z) = \breve v_-s,\,s\in \mathbb R.\eqlab{hatgammas}
\end{align}
The intersection is also transverse along the invariant line:
\begin{align}
 \hat{\gamma}^w:&\quad \hat y = h(-\chi_+),\, (x,z) = \breve v_+s,\,s\in \mathbb R,\eqlab{hatgammaw}           
\end{align}
if and only if (B) (see \eqref{nonResonance2}) holds.  


\end{lemma}
\begin{remark}\remlab{weakStrong}
We refer to $\hat \gamma^w$ in \eqref{hatgammaw} ($\hat \gamma^s$ in \eqref{hatgammas}) as the \textit{weak} (\textit{strong}) canard since its $xz$-projection is tangent, at the two-fold, to the weak (strong) eigendirection $\breve v_+$ ($\breve v_-$) associated with the desingularized sliding equations, see \eqref{slidingVectorField}. See also \cite{szmolyan_canards_2001}. 
\end{remark}
\begin{proof}
 The extension of Fenichel's slow manifold $\widehat S_{a,\epsilon}$ follows from \cite[Proposition 7.4]{krihog} where we use center manifold theory in the $(\bar \epsilon=1,\kappa_1)$-chart. For completeness, we include some details here.
 
 First, we insert \eqref{kappa1} into \eqref{slowFastEquations1} and divide the right hand side by the common factor $r_1$. This gives the following local form of $2\widetilde{\widetilde X}$:
\begin{align}
 \dot r_1 &=-r_1\epsilon_1 G(\hat y),\eqlab{BarEps1Kappa1Eqns}\\
 \dot{\hat y} &=bz_1(1+\phi(\hat y))+\beta (1-\phi(\hat y)),\nonumber\\
 \dot z_1 &=\epsilon_1 (H(\hat y)+z_1 G(\hat y)),\nonumber\\
 \dot \epsilon_1 &= 2\epsilon_1^2 G(\hat y).\nonumber
\end{align}
Here we have introduced the functions 
\begin{align}
G(\hat y) &= \beta^{-1} c(1+\phi(\hat y))- (1-\phi(\hat y)),\quad H(\hat y) = (1+\phi(\hat y))+b^{-1}\gamma (1-\phi(\hat y)).\eqlab{GHFunctions}
\end{align}
The line
\begin{align}
 \widehat L_{a,1}:\quad r_1=\epsilon_1=0,\,\hat y=h(-z_1),z_1\in \mathbb R_-,\eqlab{La1}
\end{align}
(which is the local version of $\overline{\overline L}_a$ in \propref{WidetildeWidetildeX}), 
is a set of partially hyperbolic critical points of \eqref{BarEps1Kappa1Eqns}: The linearization about any point $\widehat L_{a,1}$ gives three zero eigenvalues and one negative $bz_1\phi'-\beta \phi'$. Now, let $\mu$, $k$ and $I_a$ be as in \lemmaref{lines}. Then by the partial hyperbolicity of $\widehat L_{a,1}\vert_{z_1\in I_a}$, we obtain a unique center manifold \cite{car1}
\begin{align*}
 W^c(\widehat L_{a,1}\vert_{z_1\in I_a}):\quad \hat y &= h(-z_1)+\epsilon_1 (bz_1^2 +(\gamma-c) z_1 +\beta ) m(z_1,\epsilon_1),\\
 &(r_1,\epsilon_1,z_1)\in [0,k] \times [0,\mu^2] \times I_a,
\end{align*}
after straightforward calculations. It is unique in the sense that it does not depend upon $r_1$ and the center manifold is overflowing for the $r_1=0$ subsystem. See \cite{krihog}. 
By restricting this manifold to the invariant set $r_1^3\epsilon_1=\epsilon$, we obtain our $\widehat S_{a,\epsilon,1}$. 
Finally, the extended repelling slow manifold, $\widehat S_{r,\epsilon}$, is obtained by applying the time-reversible symmetry \eqref{timeRev}. 

Next, $\hat \gamma^s$ and $\hat \gamma^w$ are clearly invariant for the flow of \eqref{slowFastEquations1} for all $\epsilon \ge 0$. By the form of $\widehat S_{a,\epsilon}$ and $\widehat S_{r,\epsilon}$ it also easily follows (setting $z_1=\chi_\pm$ and $z_3=-\chi_\pm$ in \eqsref{Saeps1}{Sreps1}, respectively) that  $\widehat S_{a,\epsilon}^*$ and $\widehat S_{r,\epsilon}^{-*}$ contain these lines. 
For the transversality of the intersection of $\widehat S_{a,\epsilon}$ and $\widehat S_{r,\epsilon}$ along $\gamma^s$ we refer to \cite{krihog}. The details are not important for the present paper. On the other hand, the transversality along $\widehat \gamma^w$  when (B) holds, the details of which is important in the following, will follow from \lemmaref{twist} below. For further details, we again refer to \cite{krihog}.
\end{proof}

Consider the chart $(\bar \epsilon=1,\kappa_2)$. Here we obtain the following local form of $2\widetilde{\widetilde X}$ 
%
%
 \begin{align}
\dot x_2 &=\beta^{-1} c(1+\phi(\hat y))-(1-\phi(\hat y)),\eqlab{ss}\\
\dot{\hat y} &=b {z}_2(1+\phi(\hat y)) -\beta x_2(1-\phi(\hat y)),\nonumber\\
 \dot z_2&=1+\phi(\hat y)+b^{-1} \gamma (1-\phi(\hat y)),\nonumber
\end{align}
by inserting \eqref{kappa2} into \eqref{slowFastEquations1} and dividing the right hand side by the common factor $r_2=\sqrt{\epsilon}>0$. Also $\dot r_2=0$. But (as in \secref{barEps1}) we shall simply view \eqref{ss} in $(x_2,\hat y,z_2)$-space with $r_2\ge 0$ as a perturbation parameter. 
 In this way, we obtain, using \lemmaref{lines}, the following local expressions for $\overline{\overline \gamma}^w$, $\overline{\overline S}_{a,\epsilon}$ and $\overline{\overline S}_{r,\epsilon}$:
 \begin{align}
 \gamma_2^w: \quad &\left(x_2,\hat y,z_2 \right)=\left(x_2,h(-\chi_+),-\chi_{+}x_2\right),\,x_2\in \mathbb R,\eqlab{Q21App}\\
 \widehat S_{a,\epsilon,2}:\quad &\hat y = h(x_2^{-1} z_2)+\mathcal O(x_2^{-2}),\eqlab{Ca2}
\end{align}
for
\begin{align*}
 (x_2,z_2)\in \left\{(x_2,z_2)\in \mathbb R^2\vert z_2 =-z_1 x_2,\,z_1 \in I_a,\,x_2 \in [-k/\sqrt{\epsilon},-\mu^{-1}]\right\},
\end{align*}
and 
\begin{align*}
 \widehat S_{r,\epsilon,2}:\quad &\hat y = h(x_2^{-1} z_2)+\mathcal O(x_2^{-2}),\nonumber
 \end{align*}
  for 
  \begin{align*}
(x_2,z_2)\in \left\{(x_2,z_2)\in \mathbb R^2\vert z_2 =z_3 x_2,\,z_3 \in I_r,\,x_2 \in [\mu^{-1},k/\sqrt{\epsilon}]\right\}.  
  \end{align*}
Notice that the expressions for $\widehat S_{a,\epsilon,2}$ and $\widehat S_{r,\epsilon,2}$ are independent of $r_2=\sqrt{\epsilon}$. (In \cite{krihog}, we refer to the corresponding $\epsilon=0$ objects as $C_{a,2}$ and $C_{r,2}$, respectively).
  
We now describe how the tangent space $T\widehat S_{a,\epsilon,2}$ of $\widehat S_{a,\epsilon,2}$ twists upon forward flow application of the variation of \eqref{ss} along $\gamma_2^w$. For this we first replace time by $x_2$ by dividing the equations for $\hat y$ and $z_2$ by $\dot x_2$ and then linearize about the solution $\gamma_2^w:\,\hat y = h(-\chi_+),\,z_2=-\chi_+x_2,\,x_2\in \mathbb R$. 
This gives the following set of variational equations:
\begin{align}
 \frac{d u}{dx_2} &= -{\lambda_{+}}^{-1}{\beta} \left(\varphi ux_2 + bv\right)\eqlab{var}\\
 \frac{dv}{dx_2} &=  -b^{-1} \lambda_{+}^{-1}  \lambda_{-}\varphi  u.\nonumber
\end{align}
Here 
\begin{align*}
\varphi \equiv \frac12 \beta {(1-\beta^{-1} b\chi_{+})^2\phi'\left(\frac{1+\beta^{-1}b \chi_{+}}{1-\beta^{-1}b \chi_{+}}\right)}.
\end{align*}
Notice that $\varphi>0$.
\begin{lemma}\lemmalab{twist}
Suppose (A) and (B). Let $n=\lfloor \xi\rfloor\in \mathbb N$ so that $n<\xi<n+1$. 
 Then the tangent space of $\widehat S_{a,\epsilon}^*$ twists along $\hat{\gamma}^w$ in the following way:

 Consider the scaling chart ($\bar \epsilon=1,\kappa_2)$ and the coordinates $(x_2,\hat y,z_2)$. Then for $\mu$ sufficiently small, define
 the following sections 
 \begin{align}
 \widehat{\Gamma}_{\textnormal{in},2}:\quad &x_2 = -\mu^{-1},(\hat y,z_2) \in \widehat R_{\textnormal{in},2},\nonumber\\
 \widehat{\Gamma}_{\textnormal{out},2}:\quad &x_2 = \mu^{-1},(\hat y,z_2) \in \widehat R_{\textnormal{out},2}.\eqlab{GammaOut2}
 \end{align}
    Here $\widehat R_{\textnormal{in},2}$ and $\widehat R_{\textnormal{out},2}$ are suitably large rectangles in the $(\hat y,z_2)$-plane such that both sections are transverse to $\hat{\gamma}_2^{w}$. Then the tangent vector of $\widehat S_{a,\epsilon,2}\cap \widehat \Gamma_{\textnormal{in},2}$ at $\hat{\gamma}_2^w\cap \widehat \Gamma_{\textnormal{in},2}$: 
\begin{align}
 \varpi_{\textnormal{\textnormal{in}}} = \begin{pmatrix}
                               0\\
                              \frac{2 \beta^{-1} b}{(1-\beta^{-1}b \chi_+)^2\phi'\left(\phi^{-1}\left(\frac{1+\beta^{-1} {b} \chi_+}{1-\beta^{-1} {b} \chi_+} \right)\right)}\mu+\mathcal O(\mu^2)\\
                               1\\
                                \end{pmatrix},\eqlab{vin}
\end{align}
 is under the forward flow of the variational equations \eqref{var}, transformed to a tangent vector $\varpi_{\textnormal{\textnormal{out}}}=\varpi_{\textnormal{\textnormal{out}}}(\mu)$ of $\widehat S_{a,\epsilon,2}^*\cap \widehat \Gamma_{\textnormal{out},2}$ at $\hat{\gamma}_2^w\cap \widehat \Gamma_{\textnormal{out},2}$, which is transverse to the tangent space of $\widehat S_{r,\epsilon,2}\cap \widehat \Gamma_{\textnormal{out},2}$ at $\hat{\gamma}_2^w\cap \widehat \Gamma_{\textnormal{out},2}$, satisfying
 \begin{align}
 \overline{\varpi}_{\textnormal{out}}\equiv \frac{\varpi_{\textnormal{out}}(\mu)}{\vert \varpi_{\textnormal{out}}(\mu)\vert} = (0,\chi+o(1),o(1)),\eqlab{varpiout}
\end{align}
as $\mu\rightarrow 0$, 
where
\begin{align}
 \chi = \left\{\begin{array}{c}
 -1 \quad \text{if $n$ is odd}\\
               1 \quad \text{if $n$ is even}               
              \end{array}
\right. \eqlab{zetaSign}
\end{align}
  
\end{lemma}

\begin{proof}
The result follows from \cite[Lemma 7.8]{krihog} and the fact that  \eqref{var} can be written as the Weber equation
\begin{align}
 \frac{d^2v}{d\tilde x_2} - \tilde x_2 \frac{dv}{d\tilde x_2}+\xi v = 0,\eqlab{weber}
\end{align}
by replacing $x_2$ by 
\begin{align*}
 \tilde x_2 = (-\varphi \beta \lambda_+^{-1})^{1/2} x_2,
\end{align*}
and eliminating $u$. The twisting is then determined by the zeros the solution of \eqref{weber} corresponding to displacements along $T\widehat S_{a,\epsilon,2}$ for $x_2\rightarrow -\infty$. See also \cite{szmolyan_canards_2001} (twisting along weak canard for the classical folded node in $\mathbb R^3$). The form of $\varpi_{\textnormal{in}}$ in \eqref{vin} is obtained by differentiating the expression in \eqref{Ca2} with respect to $z_2$. 
%
%
\end{proof}

\begin{figure}[h!] 
\begin{center}
{\includegraphics[width=.9\textwidth]{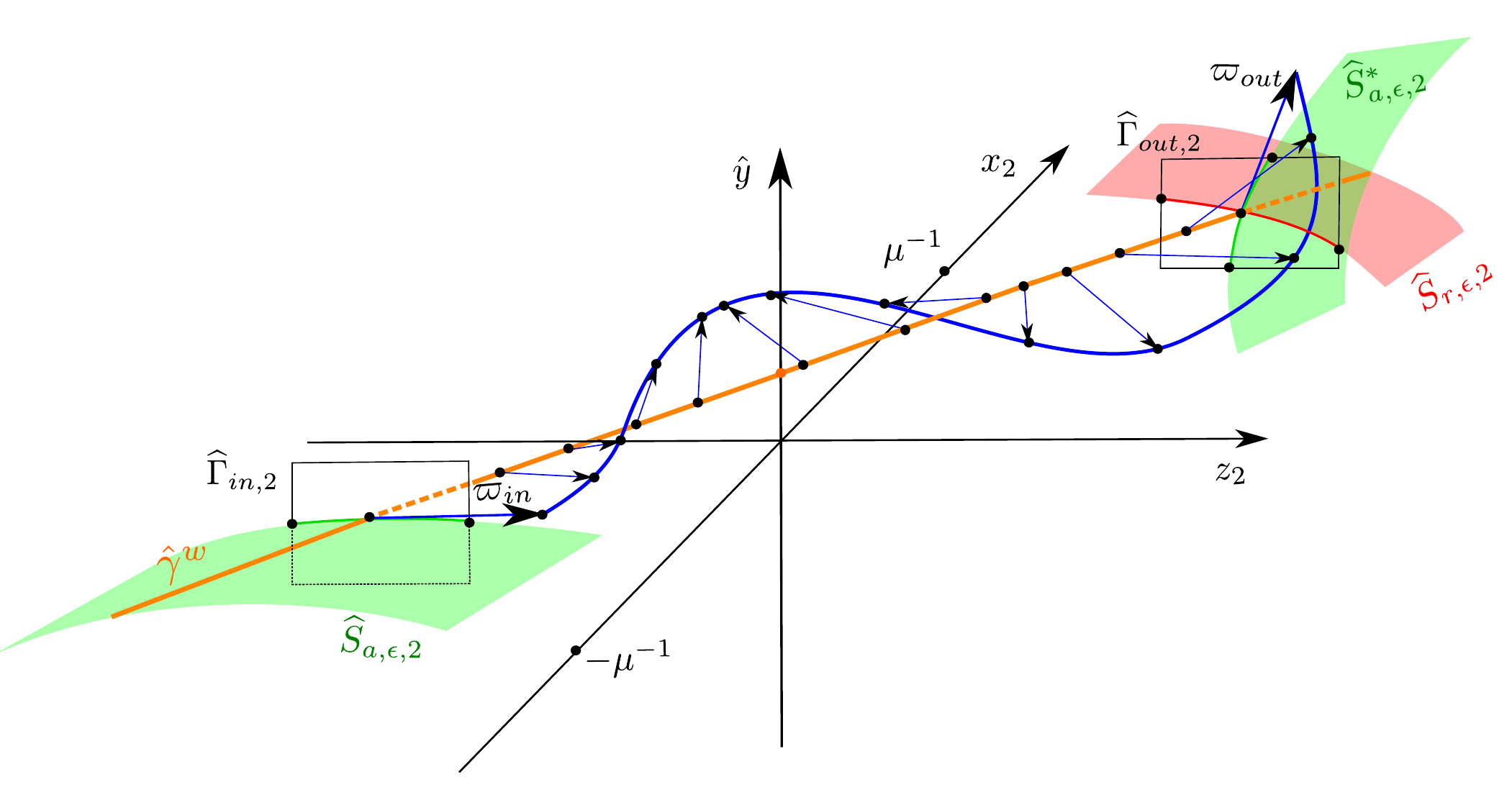}}
\end{center}
 \caption{\ed{Twist of slow manifolds along the weak canard described in \lemmaref{twist} (here $n$ even). The blue vectors are tangent vectors to $\widehat S_{a,\epsilon,2}^*$ and are described by the variational equations \eqref{var}}. }
\figlab{SarEps2}
\end{figure}

}

\subsection{Outline of the proof of \thmref{mainThm}}

\figref{SarEps2} illustrates the consequences of \lemmaref{twist}: Using the coordinates $(x_2,\hat y,z_2)$ in the scaling chart $(\bar \epsilon=1,\kappa_2)$, we see that points on the attracting slow manifold $\widehat S_{a,\epsilon,2}$ that are close to, but on either side of, the weak canard $\hat{\gamma}^w_2$ will be displaced along opposite directions once reaching the vicinity of the repelling slow manifold $\widehat S_{r,\epsilon,2}$. In particular, for the case illustrated in \figref{SarEps2} for $n$ even, initial conditions on $\widehat S_{a,\epsilon,2}$ displaced from the weak canard along the direction defined by $\varpi_{in}$, will under the forward flow eventually be above the repelling slow manifold $\widehat S_{r,\epsilon,2}$. On the other hand, initial conditions displaced in the opposite direction will eventually be below $\widehat S_{r,\epsilon,2}$. (Also, other way around if $n$ is odd. )
The proof of \thmref{mainThm} therefore naturally divides into two separate cases: moving upwards, which we shall call case (a), and moving downwards, abbreviated case (b). In reference to \lemmaref{twist}, we define the cases formally as follows:
\begin{definition}\defnlab{caseiii}
Let $n=\lfloor \xi\rfloor$ (as in \lemmaref{twist}) be  the greatest integer less than $\xi>1$ where $\xi = \lambda^{-1}_+\lambda_-$ is the ratio of eigenvalues 
defined in \eqref{nonResonance2}. Then cases (a) and (b) are defined as follows:
\begin{itemize}
 \item Case (a): 
 \begin{itemize} 
 \item $\Pi_{\text{in},0}\cap \Sigma$ is between $\gamma^w$ and $\gamma^s$ and $n$ is even,
 \end{itemize} \textit{or} 
 \begin{itemize}
 \item $\Pi_{\text{in},0}\cap \Sigma$ is between $l^-$ and $\gamma^w$ and $n$ is odd.
 \end{itemize}
 \item Case (b):  \begin{itemize} 
 \item $\Pi_{\text{in},0}\cap \Sigma$ is between $\gamma^w$ and $\gamma^s$ and $n$ is odd,
 \end{itemize}
 \textit{or} 
 \begin{itemize}
 \item $\Pi_{\textnormal{in},0}\cap \Sigma$ is between $l^-$ and $\gamma^w$ and $n$ is even.
 \end{itemize}
\end{itemize}
\end{definition}

In \secref{passage}, we begin the detailed proof of \thmref{mainThm} by describing the initial passage through $q$. During this phase, there is a contraction towards the weak canard $\gamma^w$, that ultimately produces the contraction of the map $\mathcal L_\epsilon$. 
The position of $\Pi_{\text{in},0}\cap \Sigma$ relative to $l^-$, $\gamma^w$ and $\gamma^s$ will determine the directions of this initial contraction towards $\gamma^w$.
When combined with the rotation of the tangent spaces, described by \lemmaref{twist}, this contraction allows us to separate cases (a) and (b), as defined in \defnref{caseiii}, by carrying the forward flow of $\Pi_{\text{in},\epsilon}$ towards $\hat y=\pm \delta^{-1}$, respectively, in chart $(\epsilon=1,\kappa_3)$.  See \propref{step1} and its proof.

Then, by working in charts $(\bar y=\pm 1,\kappa_3)$, we follow orbits from $\hat y= \delta^{-1}$ ($\hat \epsilon=\delta$ in $\bar y=1$ by \eqref{hatEpshatY}) for case (a) in \secref{casea} and from $\hat y= -\delta^{-1}$ ($\hat \epsilon=-\delta$ in $\bar y=-1$ by \eqref{hatEpshatY}) case (b) in \secref{caseb}.  We will then successively identify certain hyperbolic segments of $\widetilde{\widetilde X}$ on $\overline{\overline q}$ that guide the forward flow. In the blowup space $\overline{\overline{P}}$, we denote these segments by $\overline{\overline{Q}}^{j,a}$ and $\overline{\overline{Q}}^{j,b}$, $j=1,2,\ldots$, for case (a) and case (b), respectively. In both cases we end up on $\bar y =1$ within the stable manifold of a hyperbolic point $\overline{\overline{u}}$ of $\widetilde{\widetilde{X}}$. This point has a $1D$ unstable manifold $\overline{\overline{U}}$ which upon blowing down becomes the special orbit $U$.

We complete the proof by perturbing away from these segments at $\epsilon=0$ using standard local hyperbolic methods of dynamical systems theory in the appropriate charts $(\bar \epsilon=1,\kappa_i)$ and $(\bar y=\pm 1,\kappa_i)$. This allows us, upon blowing back down, to successfully guide the forward flow of $\Pi_{\text{in},\epsilon}$ close to $U$, as described in our main theorem.

In further details, the result is that, as $\epsilon\rightarrow 0$, the forward flow of $\Pi_{\textnormal{in},\epsilon}$ under $\widetilde{\widetilde X}$ converges in the Hausdorff distance to a set that within $\overline{\overline q}$ is the union of $$\overline{\overline{\gamma}}^w\cap \{r=0\}=\left\{(r,(\bar x,\bar \rho,\bar z),(\bar y,\bar \epsilon))\in \overline{\overline P}\vert r=0,\,\bar \epsilon^{-1} \bar y = h(-\chi_+),\,\bar x^{-1} \bar z = -\chi_+\right\},$$ see also \eqref{barGammaWs},  and the following case-dependent segments
\begin{itemize}
\item for case (a): $\overline{\overline{Q}}^{1,a}$, $\overline{\overline{Q}}^{2,a}$;
\item for case (b): 
\begin{itemize}
 \item ($z_1^*\le 0$): $\overline{\overline{Q}}^{1,b}$, $\overline{\overline{Q}}^{2,b}$, $\overline{\overline{Q}}^{3,b}$, $\overline{\overline{Q}}^{4,b}$, $\overline{\overline{Q}}^{5,b}$;
 \item ($z_1^*>0$): $\overline{\overline{Q}}^{1,b}$, $\overline{\overline{Q}}^{2,b}$, $\overline{\overline{Q}}^{3,b}$, $\overline{\overline{Q}}^{4,b}$.
\end{itemize}
%
\end{itemize}
We illustrate these segments in \figref{singularCycleA} for case (a) and in \figref{singularCycleB} for case (b) with $z_1^*<0$. As opposed to \figref{Gamma0barq} we now represent $\overline{\overline q}$ as a semicircle of $2D$-disks (using $S^2\cap \{\bar \rho\ge 0\}\cong D^2$). There are three important discs: at $\hat y=\bar \epsilon^{-1} \bar y = h(-\chi_+)$, which contains $\overline{\overline \gamma}^w\cap \{r=0\}$, and is illustrated in \figref{singularCycleA} and \figref{singularCycleB} using orange (boundaries); at $\bar y =1$, which contains $\overline{\overline Q}^{2,a}$ in case (a) and $\overline{\overline Q}^{5,b}$ in case (b), and is illustrated using cyan (boundaries); and finally, at $\bar y=-1$, which contains $\overline{\overline Q}^{2,b}$ in case (b), and is illustrated using purple (boundaries). All other segments are subsets of the face $\bar \epsilon=0$ of $\overline{\overline q}$. In particular, $\overline{\overline Q}^{4,b}$ belongs to the set $\overline{\overline L}_a$, recall \propref{WidetildeWidetildeX}. Notice again the point $\overline{\overline u}$ on the top disc $\bar y=1$ of $\overline{\overline q}$. In the blowup space, $U$ is the $1D$ unstable manifold of $\overline{\overline u}$. 

\ed{
For an additional depiction, in \figref{projectiveA} (case (a)) and \figref{projectiveB} (case (b)) we use $\bar z/\bar x$ as an axis to illustrate a forward orbit of a point on $\Pi_{in}$ under the flow of $\widetilde{\widetilde X}$ on $\overline{\overline P}$ in the limit $\epsilon\rightarrow 0$. Notice that $z_1=-\bar z/\bar x$ when $\bar x<0$ and $z_3=\bar z/\bar x$ when $\bar x>0$, see \eqref{kappa1} and \eqref{kappa3}. Therefore  we do not see the spheres, that occur as a result of the second blowup \eqref{blowup1} and shown in  \figref{singularCycleA} and  \figref{singularCycleB} as discs, in this projection. In fact, $\overline S_a$ and $\overline{S}_r$ coincide in this projection. (More precisely, $\overline S_a$, $\overline{\overline L}_a$, $\overline S_r$ and $\overline{\overline L}_r$ all coincide). Also $\overline{\gamma}^{s}$ and $\overline{\gamma}^w$ project to single points with $\bar z/\bar x = -\chi_\pm$, respectively, see \eqref{barGammaWs}. (Obviously, these properties only hold for the regularization of the PWL system \bpwl{}; they do not hold in general for the regularization of the PWS system \bpws{}.) But, on the other hand, we see the role of the dynamics outside $\overline{\overline{q}}$ more clearly in \figref{projectiveA} and \figref{projectiveB}. This was hidden in \figref{singularCycleA} and  \figref{singularCycleB}. 

In \figref{projectiveA} and \figref{projectiveB} we see the following: First from a point in $\Pi_{\text{in},0}$ we follow the forward flow of $X^+$ inside $\bar y=1$ until we reach $\rho=0$ and the normally hyperbolic set $\overline M^+$. From there, we follow a heteroclinic connection, connecting our landing point on $\overline M^+$ with a base point on $\overline S_a$ through a stable critical fiber. Subsequently we then follow the slow flow (sliding equations, recall \propref{WidetildeX}) on $\overline S_a$ (green segment) and contract towards $\overline \gamma^w$ (which appears as a point with $\bar z/\bar x = -\chi_+$ in our projection). Then, after having moved across the sphere along $\overline{\overline{\gamma}}^w\cap \{r=0\}$ at $\bar \epsilon^{-1} \bar y  = h(-\chi_+)$ (orange in \figref{singularCycleA} and \figref{singularCycleB}) we then follow the segments $\overline{\overline{Q}}^{j,a}$ and $\overline{\overline{Q}}^{j,b}$, $j=1,2,\ldots$ illustrated in \figref{singularCycleA} in case (a) and \figref{singularCycleB} in case (b), respectively. We re-use the colours of these segments in \figref{singularCycleA} and \figref{singularCycleB} in the present figures. Notice in particular, that $\overline{\overline Q}^{2,b}$ (cyan) in \figref{projectiveB}, illustrating case (b), approaches $\bar z/\bar x=z_1^*$ on $\bar y=-1$, cf. \lemmaref{sigma0}, which is outside the funnel region (to the left of $\bar z/\bar x=-\chi_-$, the value of $\bar z/\bar x$ at $\overline \gamma^s$, in this projection). We therefore move towards the fold at $\bar z=0$ (by following the slow flow on $\overline {\overline L}_a$).  }

\begin{figure}[h!] 
\begin{center}
{\includegraphics[width=.8\textwidth]{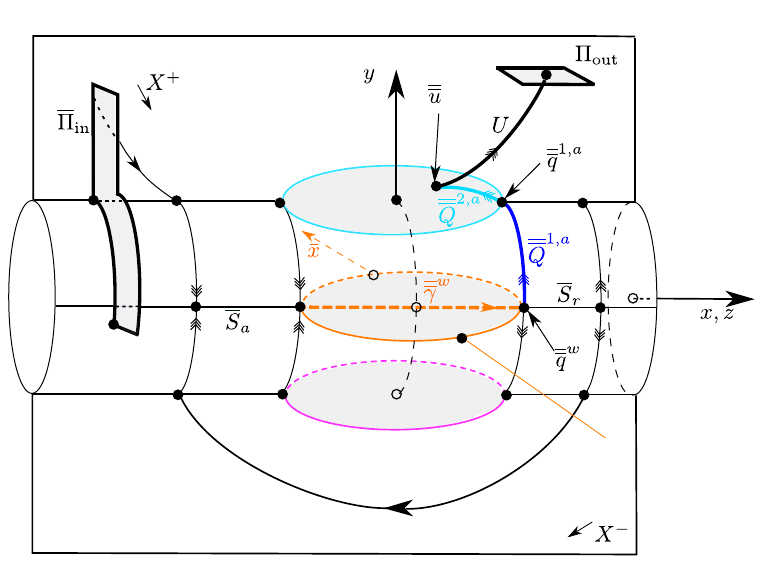}}
\end{center}
 \caption{\ed{Schematic projection of case (a). In this figure the semicircle of hemispheres $\overline{\overline q}$ is represented as a semicircle of discs. $\overline{\overline Q}^{2,a}\subset \{\bar y=1\}$ (cyan) is asymptotic to the hyperbolic point $\overline{\overline u}$. $U$ of $X_+$ is then the $1D$ unstable manifold of $\overline{\overline u}$ which completes the singular orbit}. }
\figlab{singularCycleA}
\end{figure}
\begin{figure}[h!] 
\begin{center}
{\includegraphics[width=.8\textwidth]{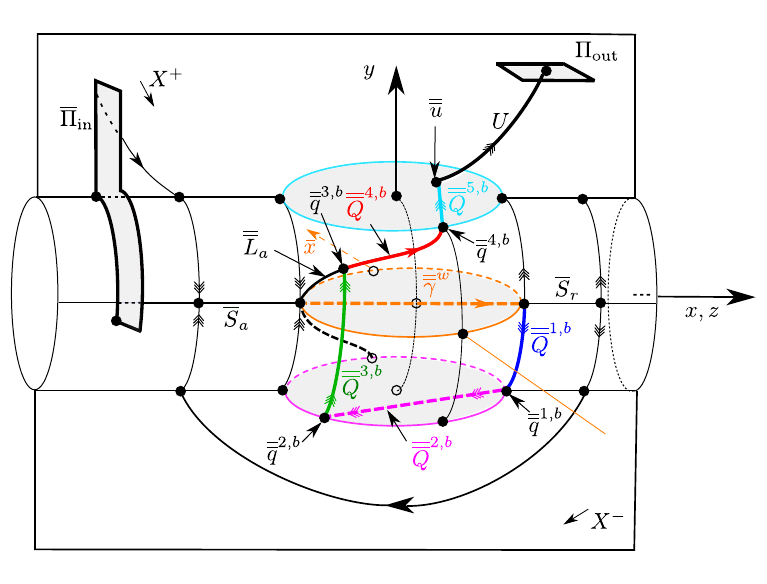}}
\end{center}
 \caption{\ed{Schematic projection of case (b), with $z_{1}^*<0$. In contrast to case (a) of \figref{singularCycleA}, $\overline{\overline Q}^{2,b}$ and $\overline{\overline Q}^{3,b}$ (green) re-injects the dynamics into the region corresponding to \textit{stable sliding} (where $\bar x<0,\,\bar z<0$). Recall also \secref{intuition} and \lemmaref{sigma0}. Here \textit{stable sliding} is formally extended onto $\overline{\overline q}$ through the partially hyperbolic line $\overline{\overline L}_a \subset \{\bar \epsilon=0\}$ of equilibria for $\widetilde{\widetilde X}$ (see also \propref{WidetildeWidetildeX} and \eqref{La1} below). $\overline{\overline Q}^{3,b}$ is then a critical, stable fiber of $\overline{\overline L}_a$ with base point $\overline{\overline q}^{3,b}$. In agreement with \secref{intuition}, the slow forward flow of $\overline{\overline q}^{3,b}$ on $\overline{\overline L}_a$, giving rise to the orbit segment $\overline{\overline Q}^{4,b}$ (red), then approaches $\overline{\overline q}^{4,b}$ at $\bar y=1,\,\bar z=0$ on the ``visible'' fold $\overline{\overline l}^+$. At $\overline{\overline q}^{4,b}$, the line $\overline{\overline{L}}_a$ is actually fully nonhyperbolic (due to the lack of hyperbolicity of $\overline{\overline l}^+$, see \propref{WidetildeWidetildeX}). But using a separate blowup here (see details in \appref{z1StarNeg}) we obtain separate hyperbolic segments, that upon blowing down gives $\overline Q^{5,b}\subset \{\bar y=1\}$ (cyan). This orbit then finally connects to $U$ through the hyperbolic point $\overline{\overline u}$ as in case (a)}. }
\figlab{singularCycleB}
\end{figure}

\begin{figure}[h!] 
\begin{center}
{\includegraphics[width=.7\textwidth]{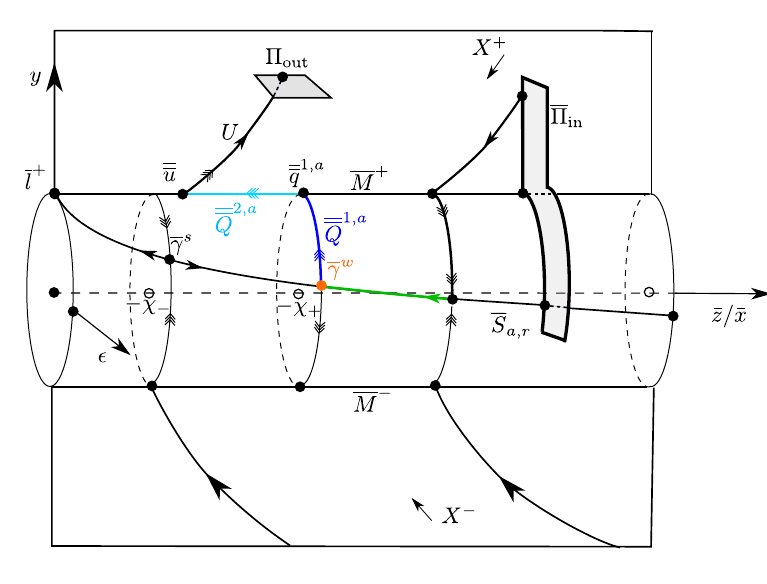}}
\end{center}
 \caption{\ed{Schematic projection of case (a) using $\bar z/\bar x$ as an axis. See text and caption of \figref{singularCycleA} for more details}. }
\figlab{projectiveA}
\end{figure}
\begin{figure}[h!] 
\begin{center}
{\includegraphics[width=.7\textwidth]{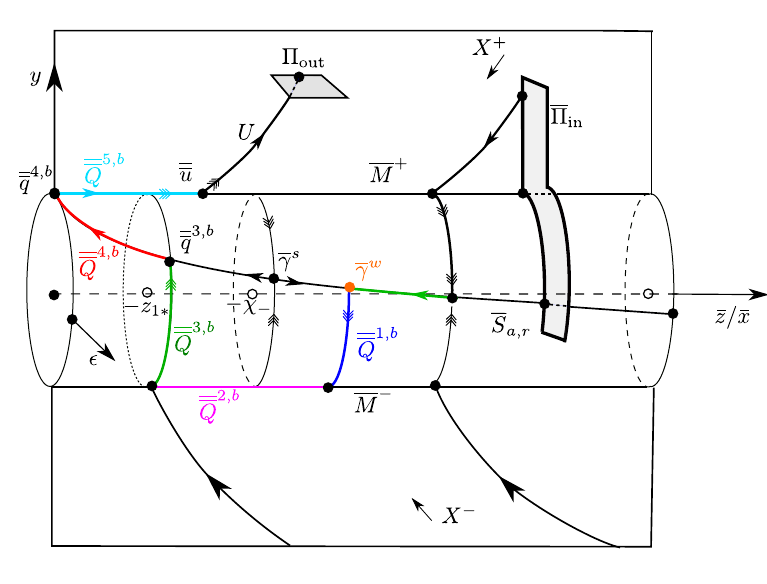}}
\end{center}
 \caption{\ed{Schematic projection of case (b) using $\bar z/\bar x$ as an axis. See text and caption of \figref{singularCycleB} for more details. Also compare with \figref{projectiveA}}.
 }
\figlab{projectiveB}
\end{figure}

\subsection{Initial passage through $q$}\seclab{passage}
In this section we describe a mapping from $\Pi_{\text{in},\epsilon}$ to $\{\hat y=\pm \delta^{-1}\}$ with $\delta>0$ sufficiently small, using the exit chart $(\bar \epsilon = 1,\kappa_3)$ given in \eqref{kappa3}. 

Define the case-dependent section $\widehat \Lambda_{\textnormal{out},3}$ as follows: 
 \begin{align}
  \textnormal{case (a)}:\quad &\widehat \Lambda_{\textnormal{out},3}:\quad \hat y = \delta^{-1},(r_3,z_3,\epsilon_3) \in \widehat R_{\textnormal{out},3}\cap \{r_3\ge 0,\,\epsilon_3\ge 0\},\eqlab{LambdaOut3}\\
  \textnormal{case (b)}:\quad &\widehat \Lambda_{\textnormal{out},3}:\quad \hat y = -\delta^{-1},(r_3,z_3,\epsilon_3) \in \widehat R_{\textnormal{out},3}\cap \{r_3\ge 0,\,\epsilon_3\ge 0\},\nonumber
  \end{align}
 where $\widehat R_{\textnormal{out},3}$ is now a small neighbourhood of $(0,-\chi_+,0)$. We illustrate $\overline{\widehat \Lambda}_{\textnormal{out}}$ in \figref{barEpsEq1A} (a) and (b) for case (a) and (b), respectively, in the $\bar \epsilon=1$ chart of the initial blowup \eqref{blowup0}.
 \begin{figure}[h!] 
\begin{center}
\subfigure[Case (a)]{\includegraphics[width=.495\textwidth]{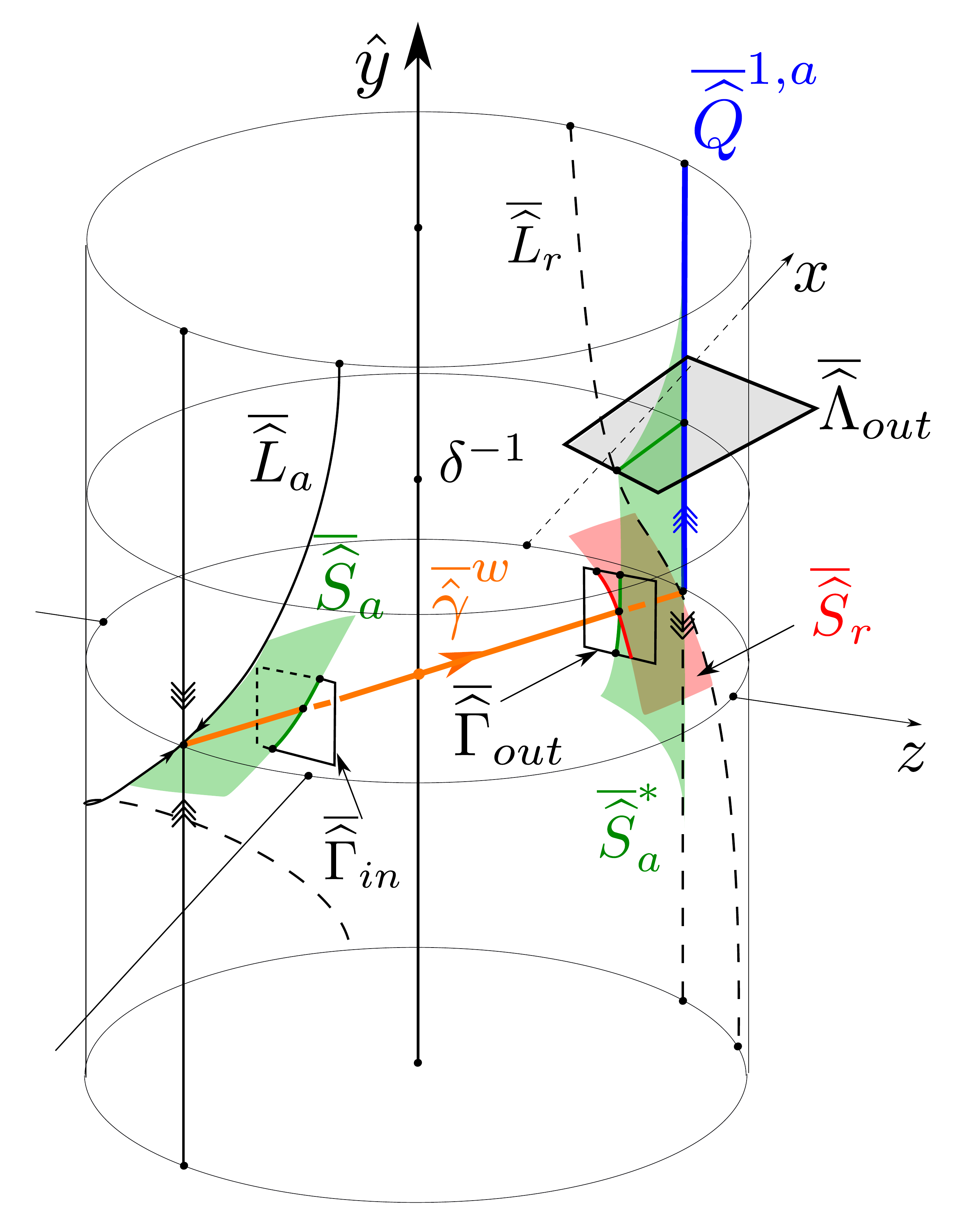}}
\subfigure[Case (b)]{\includegraphics[width=.495\textwidth]{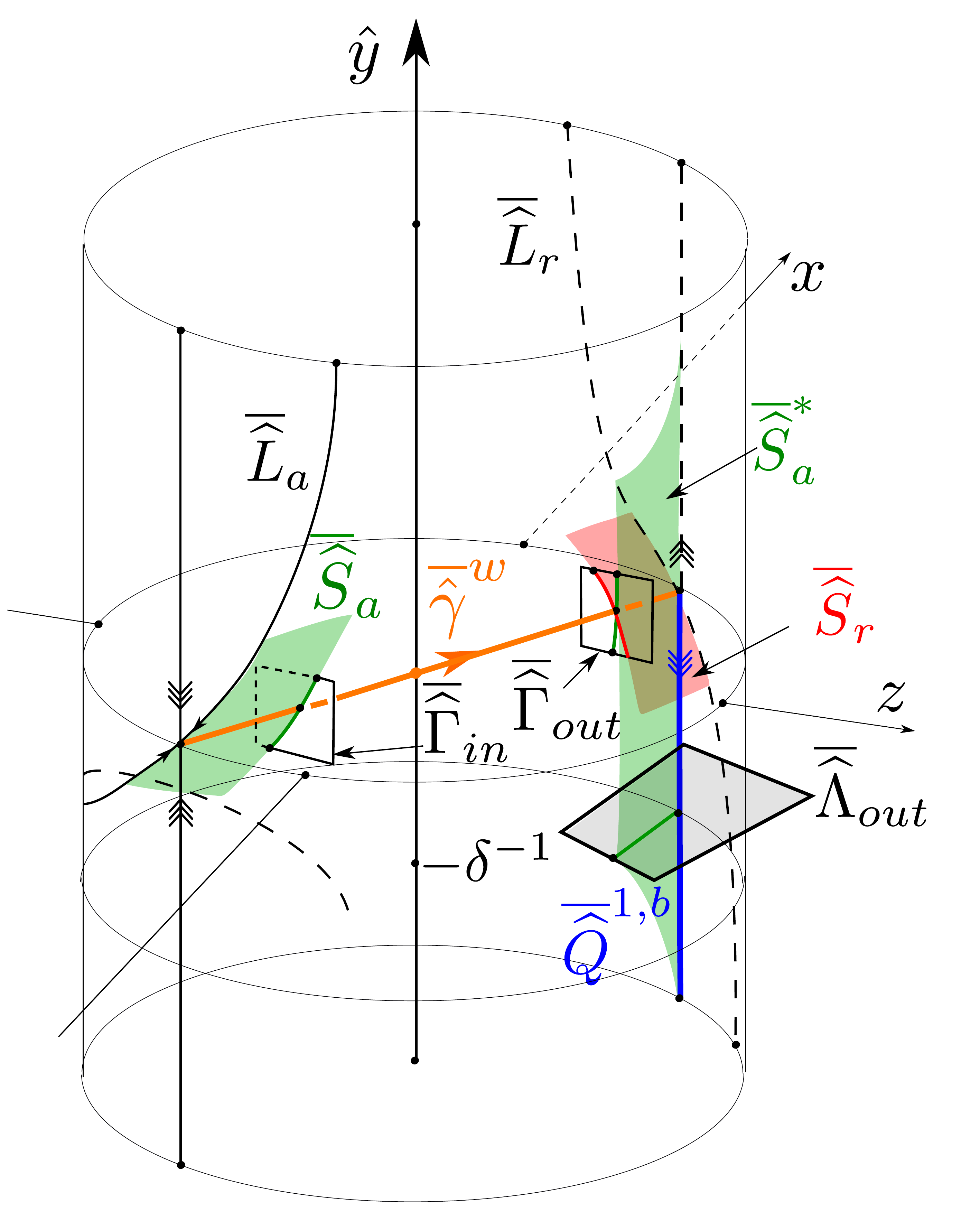}}
\end{center}
 \caption{\ed{Schematic representation of important objects relevant to the proof of \propref{step1} in chart $\bar \epsilon=1$ of blowup \eqref{blowup0}. Notice, in particular, that $\overline{\widehat \Lambda}_{\text{out}}$, which is covered by chart $(\bar \epsilon=1,\kappa_3)$ and where it is denoted by $\widehat \Lambda_{\text{out},3}$, see \eqref{LambdaOut3}, is case-dependent: It is a subset of $\{\hat y=\delta^{-1}\}$ in case (a), see Fig. (a), and a subset of $\{\hat y=-\delta^{-1}\}$ in case (b), see Fig. (b)}. }
\figlab{barEpsEq1A}
\end{figure}
    Then we have the following.
  \begin{proposition}\proplab{step1}
The mapping
\begin{align}
\widehat{\mathcal L}_{3,\epsilon}:\quad \Pi_{\textnormal{in},\epsilon}\rightarrow \widehat \Lambda_{\textnormal{out},3},\,(y,z) \mapsto (r_3,z_3,\epsilon_3) = (r_{3+}(y,z),z_{3+}(y,z),\epsilon_{3+}(y,z)),\eqlab{hatLeps}
\end{align}
where $r_{3}^2\epsilon_{3}=\epsilon$, 
obtained by the forward flow, is well-defined. 
In fact, $\widehat{\mathcal L}_{3,\epsilon}$ is contractive for $\epsilon>0$ sufficiently small, in the following sense. Fix any $\upsilon \in (0,1)$, $K>0$ sufficiently large and consider the set
\begin{align*}
 \widehat D_3 = \left\{(r_3,\hat y,z_3,\epsilon_3)\in \widehat{\Lambda}_{\textnormal{out},3} \vert r_3^2 \epsilon_3 = \epsilon,\, K^{-1} \ln^{-1} \epsilon^{-1} \le \textnormal{max}\,(\vert z_3+\chi_+\vert,\epsilon_3)\le K  \ln^{-1} \epsilon^{-1}\right\},
\end{align*}
contained within $\widehat{\Lambda}_{\textnormal{out},3}$. Then $\widehat{\mathcal L}_{3,\epsilon}(\Pi_{\textnormal{in},\epsilon})\subset \widehat D_3 $. 
Furthermore,
the eigenvalues of the Jacobian 
 \begin{align*}
D\begin{pmatrix}
  \hat{z}_{3+}\\
  \epsilon_{3+}
 \end{pmatrix}
 (y,z),
 \end{align*}
 are $\mathcal O(e^{-C/\epsilon})$ and  $\mathcal O(\epsilon^{(\xi-1) \upsilon/2})$, where $\xi$ is defined in assumption (B). 
\end{proposition}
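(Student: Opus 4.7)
The plan is to track the forward flow of $\Pi_{\text{in}}$ through three successive regimes organized by the three charts $(\bar\epsilon=1,\kappa_i)$, $i=1,2,3$, picking up the two eigenvalue estimates in the process. In the entry chart $\kappa_1$, the section $\Pi_{\text{in}}$ is chosen (cf.~\eqref{PiNew}) to lie in an $\mathcal O(\sqrt\epsilon)$-thickening of the normally hyperbolic slow manifold $\widehat S_{a,\epsilon}$ described by \eqref{Saeps1}. Standard Fenichel theory then gives exponential attraction toward $\widehat S_{a,\epsilon}$ at a rate $\mathcal O(e^{-c/\epsilon})$; this is the source of the strong eigenvalue. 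I would then invoke \lemmaref{lines} to follow $\widehat S_{a,\epsilon}$ down along the slow (sliding) flow up to $x_2=-\mu^{-1}$, i.e.\ up to the boundary $r_1=\mu^{-1}\sqrt{\epsilon_1}^{-1}\sqrt{\epsilon_1}$ of the $\kappa_1$-chart, using the transition \eqref{kappa21} into the scaling chart.

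In the scaling chart $(\bar\epsilon=1,\kappa_2)$ the equations become \eqref{ss}, which is $\epsilon$-independent. The weak canard $\hat\gamma^w_2$ is described by \eqref{Q21App} and the extended slow manifolds $\widehat S_{a,\epsilon,2}$ and $\widehat S_{r,\epsilon,2}$ by \eqref{Ca2}. The forward flow applied to the image obtained from Stage~1 brings it, for $x_2\in[-\mu^{-1},\mu^{-1}]$, exponentially close to $\hat\gamma^w_2$ in the fast direction. The twist of the tangent space is governed by the Weber equation \eqref{weber} of index $\xi$, as used in \lemmaref{twist}. The key quantitative input I would extract from the Weber analysis is that, along the weak direction, a thin neighborhood of $\hat\gamma^w_2$ of scaled width $\varrho$ is stretched, on the repelling side at $x_2=+\mu^{-1}$, to width $\mathcal O(\varrho\cdot (\mu^{-1})^{\xi-1})$. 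Translating this back via $r_2=\sqrt\epsilon$ and $x_2=\mu^{-1}$ using \eqref{kappa32}, together with an extension across the $\kappa_2/\kappa_3$-transition using the cut-off at $|z_3+\chi_+|\ge K^{-1}\ln^{-1}\epsilon^{-1}$ and $\epsilon_3\le K\ln^{-1}\epsilon^{-1}$, yields the image in $\widehat D_3$ and the weak eigenvalue estimate $\mathcal O(\epsilon^{(\xi-1)\upsilon/2})$. The logarithmic window arises because the relevant range of $\mu$ is $\mu\in[\epsilon^{\upsilon/2},\mu_0]$, i.e.\ we can enter the scaling chart at $r_3=\epsilon^{\upsilon/2}$.

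Having reached the chart $(\bar\epsilon=1,\kappa_3)$ with the orbit exponentially close to the continuation of $\hat\gamma^w$ along the repelling slow manifold $\widehat S_{r,\epsilon}$, I would follow the flow until $\hat y=\pm\delta^{-1}$. Because of the twist described in \lemmaref{twist}, the sign $\zeta=\pm 1$ of \eqref{zetaSign} determines whether the forward flow leaves $\widehat S_{r,\epsilon}$ upwards ($\hat y\to+\infty$, case (a)) or downwards ($\hat y\to -\infty$, case (b)); this is precisely the case dichotomy of \defnref{caseiii}. Fenichel repulsion from $\widehat S_{r,\epsilon}$ together with the transverse intersection of $\widehat S^t_{a,\epsilon}$ and $\widehat S^{-t}_{r,\epsilon}$ along $\hat\gamma^w$ force $\hat y$ to reach $\pm\delta^{-1}$ in finite time along a smooth, $\epsilon$-uniformly bounded trajectory, yielding the well-definedness of $\widehat{\mathcal L}_{3,\epsilon}$. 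During this last passage neither eigenvalue of $D(\hat z_{3+},\epsilon_{3+})$ can grow beyond a multiplicative $\mathcal O(1)$ factor, so the two estimates $\mathcal O(e^{-c/\epsilon})$ and $\mathcal O(\epsilon^{(\xi-1)\upsilon/2})$ survive.

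The main obstacle is Stage~2: making the Weber-equation contraction estimate uniform in the entry data across the whole window of $\mu$ and matching it cleanly to the Fenichel contraction of Stage~1 to produce the product form of the Jacobian with \emph{two} distinct eigenvalue scales. I expect this to require a careful decomposition of $D\widehat{\mathcal L}_{3,\epsilon}$ into its action on the strong stable fiber of $\widehat S_{a,\epsilon}$ (the $\mathcal O(e^{-c/\epsilon})$ factor) and on the tangent direction to $\widehat S_{a,\epsilon}$ (the Weber factor giving $\mathcal O(\epsilon^{(\xi-1)\upsilon/2})$), together with estimates on the Weber fundamental solutions analogous to \cite[Lemma 7.8]{krihog}. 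The remaining steps are then routine applications of Fenichel theory, the implicit function theorem, and the chart transitions \eqref{kappa21}--\eqref{kappa32}.
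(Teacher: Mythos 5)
Your chart-by-chart architecture (Fenichel contraction in the entry chart, twist via \lemmaref{twist} in the scaling chart to separate cases (a) and (b), exit along the partially hyperbolic line $\widehat L_{r,3}$ in $\kappa_3$) matches the paper's proof, and your identification of the strong eigenvalue $\mathcal O(e^{-c/\epsilon})$ with the transverse Fenichel contraction onto $\widehat S_{a,\epsilon}$ is exactly right. The genuine gap is in where the algebraic eigenvalue $\mathcal O(\epsilon^{(\xi-1)\upsilon/2})$ comes from. You attribute it to the Weber equation \eqref{weber} in the scaling chart, but over the fixed domain $x_2\in[-\mu^{-1},\mu^{-1}]$ the variational flow \eqref{var} is $\epsilon$-independent and can only contribute $\mathcal O(1)$ factors; moreover the factor $(\mu^{-1})^{\xi-1}$ you quote on the repelling side is an \emph{expansion}, not a contraction, so it cannot by itself ``yield the weak eigenvalue estimate.'' In the paper the algebraic rate is produced \emph{before} the scaling chart: in chart $(\bar \epsilon=1,\kappa_1)$ the desingularized reduced problem \eqref{reducedSaeps1} on the slow manifold has a saddle at $(z_1,\epsilon_1)=(\chi_+,0)$ with eigenvalues $1-\xi<0$ and $2$, and the passage of funnel orbits past this saddle (over a slow time of order $\ln \epsilon^{-1}$, since $\epsilon_1$ must grow from $\mathcal O(\epsilon)$ at $\Pi_{\text{in}}$ to $\mu^2$) contracts the $z_1$-direction towards the weak canard at the rate governed by the eigenvalue ratio $(\xi-1)/2$, up to the $\upsilon$-losses; this in-slow-manifold contraction towards $\hat{\gamma}^w$ is what places $\widehat{\mathcal L}_{2,\epsilon}(\Pi_{\text{in}})$ inside the annulus $A_{\hat{\gamma}_2^w}$ of \eqref{bound}. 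The Weber equation is then used only for the sign information \eqref{zetaSign}, i.e., to decide on which side of $\widehat S_{r,\epsilon,2}$ the image exits.

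A second, related omission: the set $\widehat D_3$ carries a \emph{lower} bound $z_3+\chi_+\ge K^{-1}\ln^{-1}\epsilon^{-1}$, matching the inner radius $K^{-1}\epsilon^{(\xi-1)/(2\upsilon)}$ of the annulus in \eqref{bound}; this must be \emph{derived} (again from the saddle transition estimates in $\kappa_1$, using that $\Pi_{\text{in}}\cap\Sigma$ excludes the span of $v_+$), not imposed as a cut-off. Without a quantitative lower bound keeping the image away from $\hat{\gamma}_2^w$ and from $\widehat S_{r,\epsilon,2}$, the claim that the whole image is repelled to one definite side and reaches $\hat y=\pm\delta^{-1}$ inside $\widehat D_3$ does not follow. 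Your ``main obstacle'' paragraph correctly senses that the two eigenvalue scales must be separated by decomposing $D\widehat{\mathcal L}_{3,\epsilon}$ along the strong stable fibers and the tangent directions of $\widehat S_{a,\epsilon}$, but the tangential factor has to be extracted from the reduced flow in $\kappa_1$ (Gronwall plus the saddle with eigenvalues $1-\xi$ and $2$), not from the Weber equation.
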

\begin{proof}
From the definition of $\Pi_{\text{in},\epsilon}$ in \eqref{Pin} all orbits initially contract towards $S_{a,\epsilon}$. Therefore we work in the entry chart $(\bar \epsilon =1,\kappa_1)$ and the scaling chart $(\bar \epsilon=1,\kappa_2)$. $\widehat S_{a,\epsilon}$ as given in \eqref{Saeps1} will be covered in both of these charts. We denote this unique slow manifold by $\widehat S_{a,\epsilon,1}$ and $\widehat S_{a,\epsilon,2}$ in the charts $(\bar \epsilon =1,\kappa_1)$ and $(\bar \epsilon=1,\kappa_2)$, respectively. Then we have the following.
\begin{lemma}\lemmalab{contract}
 Consider the mapping 
\begin{align}
\widehat{\mathcal L}_{2,\epsilon}:\quad \Pi_{\textnormal{in},\epsilon}\rightarrow \widehat{\Gamma}_{\textnormal{in},2},\,(y,z) \mapsto (x_2,\hat y,z_2) = (-\mu^{-1},\hat y_+(y,z),z_{2+}(y,z)),\eqlab{Leps2}
\end{align}
with $r_2=\sqrt{\epsilon}$,
obtained by the forward flow. Then $\widehat{\mathcal L}_{2,\epsilon}$ is contractive for $\epsilon>0$ sufficiently small, in the following sense. Fix any $\upsilon \in (0,1)$. Then the image $\widehat{\mathcal L}_{2,\epsilon}(\Pi_{\textnormal{in},\epsilon})$ is a $\mathcal O(e^{-C/\epsilon})$-thin strip around $\widehat S_{a,\epsilon,2}\cap \widehat{\Gamma}_{\textnormal{in},2}$ with $C>0$. The width of the strip is $\mathcal O(\epsilon^{(\xi-1) \upsilon/2})$. 
In particular, let $A_{\hat \gamma_2^w}$ be 
  an annulus in $\widehat \Gamma_{\textnormal{in},2}$, centred around $\hat \gamma_2^w\cap \widehat{\Gamma}_{\textnormal{in},2}$, with inner radius $K^{-1} \epsilon^{(\xi-1) /(2\upsilon)}$ and outer radius $K \epsilon^{(\xi-1) \upsilon/2}$, with $K$ sufficiently large. 
  Moreover, consider $N_{\widehat S_{a,\epsilon,2}}$ as the $C_1^{-1} e^{-C_1/\epsilon}$-neighbourhood of $\widehat S_{a,\epsilon,2}\cap \widehat{\Gamma}_{\textnormal{in},2}$ within $\widehat{\Gamma}_{\textnormal{in},2}$:
  \begin{align*}
   N_{\widehat S_{a,\epsilon,2}} = \{(x_2,\hat y,z_2)\in \widehat \Gamma_{\textnormal{in},2}\vert \textnormal{dist}((x_2,\hat y,z_2),\widehat S_{a,\epsilon,2}\cap \widehat{\Gamma}_{\textnormal{in},2})\le C_1^{-1} e^{-C_1/\epsilon}\},
  \end{align*}
with $C_1>0$ sufficiently small. Then for all $\epsilon>0$ sufficiently small, 
\begin{align}
\widehat{\mathcal L}_{2,\epsilon}(\Pi_{\textnormal{in},\epsilon}) \subset A_{\hat{\gamma}_2^w}\cap N_{\widehat S_{a,\epsilon,2}}.\eqlab{bound}
\end{align}
Furthermore,
the Jacobian 
 \begin{align*}
D\begin{pmatrix}
  \hat y_+\\
  \hat z_{2+}
 \end{pmatrix}
 (y,z),
 \end{align*}
 has one eigenvalue of $\mathcal O(e^{-C/\epsilon})$, $C>0$, and another one of $\mathcal O(\epsilon^{(\xi-1) \upsilon/2})$, the estimates being uniform in $(y,z)$. 
\end{lemma}

\begin{proof}
Consider the local form of $2\widetilde{\widetilde X}$ in \eqref{BarEps1Kappa1Eqns}. Then on 
\begin{align*}
\widehat S_{a,\epsilon,1}:\quad \hat y &= h(-z_1)+\epsilon_1 (bz_1^2 +(\gamma-c) z_1 +\beta ) m(z_1,\epsilon_1),\quad (r_1,\epsilon_1,z_1)\in [0,k] \times [0,\mu^2] \times I_a,
 \end{align*}
 cf. \lemmaref{lines}, we obtain the reduced problem 
\begin{align}
 \dot z_1 &=(c-bz_1)^{-1} \left(bz_1^2+(c-\gamma)z_1+\beta\right)\left(1+\epsilon_1 n_1(z_1,\epsilon_1)\right),\eqlab{reducedSaeps1}\\
  \dot \epsilon_1&=2\epsilon_1 ,\nonumber
\end{align}
with $n_1$ smooth, after division of the right hand side by 
$G(\hat y)\epsilon_1$. This quantity is positive inside the funnel and therefore corresponds to a nonlinear transformation of time. Since the $r_1$-equation decouples, $r_1$ can be determined by the conservation of $\epsilon$: $r_1^2 = \epsilon_1^{-1} \epsilon$. The point $z_1=\chi_+<0$, $\epsilon_1=0$ is a saddle for \eqref{reducedSaeps1}, whose linearization has the eigenvalues
\begin{align*}
 1-\xi,\,2. 
\end{align*}
Notice $1-\xi<0$ by (A). 
The result then follows by (a) simple estimation through Gronwall's inequality (or simply using general results on Silnikov boundary value problems), (b) the exponential contraction towards $\widehat S_{a,\epsilon,1}$, (c) applying the coordinate transformation \eqref{kappa21}. 
\end{proof}
To complete the proof of \propref{step1}, we subsequently describe the mapping 
\begin{align}
 \widehat{\mathcal L}_{2,\epsilon}(\Pi_{\text{in},\epsilon})\subset \widehat{\Gamma}_{\textnormal{in},2}\rightarrow \widehat{\Gamma}_{\textnormal{out},2},\eqlab{mappingk2}
\end{align}
from $\widehat{\mathcal L}_{2,\epsilon}(\Pi_{\text{in},\epsilon})$ to $\widehat{\Gamma}_{\textnormal{out},2}$, see \eqref{GammaOut2},
in the chart $(\bar \epsilon=1,\kappa_2)$. 
By \lemmaref{contract}, we can do this by considering the variational equations \eqref{var} about $\widehat \gamma_2^w$. Indeed, the image $\widehat{\mathcal L}_{2,\epsilon}(\Pi_{\text{in},\epsilon})$ is close to the tangent space of $\widehat S_{a,\epsilon,2}\cap \widehat \Gamma_{\text{in},2}$ for $0<\epsilon\ll 1$. By \eqref{reducedSaeps1}, the case when $\Pi_{\text{in},0}\cap \Sigma$ is between $\hat \gamma^w$ and $\hat \gamma^s$, then corresponds to variations in the positive direction of $\varpi_{\text{in}}$, recall \eqref{vin} and see \figref{SarEps2}. By \lemmaref{twist}, in particular \eqref{zetaSign}, the forward flow of $\widehat{\mathcal L}_{2,\epsilon}(\Pi_{\text{in},\epsilon})$ therefore intersects $\widehat{\Gamma}_{\textnormal{out},2}$ below (above) the unique slow manifold $\widehat S_{r,\epsilon,2}$, see \eqref{Sreps1} for $x_2=1/\sqrt{\epsilon_3}\ge \mu^{-1}$, when $n$ is even (odd), respectively. On the other hand, the case when $\Pi_{\text{in},0}\cap \Sigma$ is between $l^-$ and $\hat \gamma^w$, corresponds to variations in the negative direction of $\varpi_{\text{in}}$ and the forward flow of $\widehat{\mathcal L}_{2,\epsilon}(\Pi_{\text{in},0})$ therefore intersects $\widehat{\Gamma}_{\textnormal{out},2}$ above (below) the unique slow manifold $\widehat S_{r,\epsilon,2}$ when $n$ is even (odd), respectively. Under the $O(1)$-time application of the forward flow in chart $\kappa_2$, the image of \eqref{mappingk2} remains cf. \eqref{bound} sufficiently (for the proceeding arguments to follow through) bounded away from the weak canard and $\widehat S_{r,\epsilon,2}$ at $\widehat{\Gamma}_{\textnormal{out},2}$. 
In the chart $(\bar \epsilon=1,\kappa_3)$, we obtain the following equations:
 \begin{align}
 \dot r_3 &=r_3\epsilon_3 G(\hat y),\eqlab{BarEps1Kappa3Eqns}\\
 \dot{\hat y} &=bz_1(1+\phi(\hat y))-\beta (1-\phi(\hat y)),\nonumber\\
 \dot z_3 &=\epsilon_3 (H(\hat y)-z_3 G(\hat y)),\nonumber\\
 \dot \epsilon_3 &= -2\epsilon_3^2 G(\hat y),\nonumber
\end{align}
recall \eqref{GHFunctions}.
The line
\begin{align}
\widehat L_{r,3}:\quad r_3=\epsilon_3 = 0,\,\hat y = h(z_3),\, z_3 \in (0,\infty),
\end{align}
(the local form of $\overline{\overline L}_r$ in \propref{WidetildeWidetildeX})
is partially hyperbolic and, as in chart $\kappa_1$, this produces a unique center manifold:
\begin{align*}
 W^c(\widehat L_{r,3}\vert_{z_3\in I_r}):\quad \hat y &= h(z_3)+\epsilon_3 (bz_3^2 -(\gamma-c) z_3 +\beta ) m(-z_3,\epsilon_3),\\
 &(r_3,\epsilon_3,z_3)\in [0,k] \times [0,\mu^2] \times  I_r,
\end{align*}
with $\mu$, $k$ and $I_r$ as in \lemmaref{lines}, 
which upon restriction to the invariant set $r_3^2\epsilon_3=\epsilon$ gives $\widehat S_{r,\epsilon,3}$ \eqref{Sreps1}. Here $\hat \gamma_3^w$ intersects $\widehat L_{r,3}$ in 
\begin{align}
 \hat q_3^w:\quad r_3=0,\epsilon_3=0,\,z_3 = -\chi_+,\,\hat y=h(-\chi_+).\eqlab{hatqw3}
\end{align}
for $r_3=0$. The unstable manifold of \eqref{hatqw3} for \eqref{BarEps1Kappa3Eqns} is the union of the two sets
\begin{align}
 \widehat Q_3^{1,a}:\quad &r_3=\epsilon_3=0,\,z_3=-\chi_+,\,\hat y\ge h(-\chi_+),\eqlab{hatQ31a}\\
 \widehat Q_3^{1,b}:\quad &r_3=\epsilon_3=0,\,z_3=-\chi_+,\,\hat y\le h(-\chi_+).\eqlab{hatQ31b}
\end{align}
Using the initial conditions at $\widehat{\Gamma}_{\textnormal{out},3}\subset \{\epsilon_3=\mu^2\}$ (using the coordinate change in \eqref{kappa32}) it is then relatively easy to finish the proof of \propref{step1}, e.g. by using estimation after transformation into Fenichel-like normal form (by straightening out unstable fibers), and follow the forward flow up/down to the section $\widehat{\Lambda}_{\text{out},3}\subset \{\hat y=\pm \delta^{-1}\}$ in cases (a) and (b), respectively. The result shows that the forward orbits in $(\bar \epsilon=1,\kappa_3)$ follow the union of ${\hat \gamma}_3^w$ within $\{r_3=0\}$ and: $\widehat Q_3^{1,a}$ in case (a) or $\widehat Q_3^{1,b}$ in case (b) within $\{r_3=\epsilon_3=0\}$ as $\epsilon\rightarrow 0$.
\end{proof}


\subsection{Case (a)}\seclab{casea}
We now focus attention on case (a) of \defnref{caseiii}, the simpler of the two cases. We shall be able to complete the proof of \thmref{mainThm} for case (a) in this section. We start at $\widehat{\mathcal L}_{3,\epsilon}(\Pi_{in})\subset \{\hat y=\delta^{-1}\}$. Note that $\hat y$ is increasing on $\widehat Q_3^{1,a}\backslash\{\hat y=h(-\chi_+)\}$. Therefore to follow $\widehat Q_3^{1,a}$ forward we move from chart $(\bar \epsilon =1, \kappa_3)$ to chart $(\bar y=1,\kappa_3)$ using the coordinate change \eqref{cc}.
In chart $(\bar y=1,\kappa_3)$, we obtain the following equations from \eqref{xyzhatEps0Pos}:
\begin{align}
 \dot r_3 &=r_3 y_3 J(\hat \epsilon),\eqlab{BarY1Kappa3Eqns}\\
 \dot y_3 &=y_3 \left( K_3(z_3,\hat \epsilon)-2y_3 J(\hat \epsilon)\right),\nonumber\\
 \dot z_3 &=y_3 \left(L(\hat \epsilon)-z_3 J(\hat \epsilon)\right),\nonumber\\
 \dot{\hat \epsilon}&=-\hat \epsilon K_3(z_3,\hat \epsilon),\nonumber
\end{align}
where
\begin{align*}
 J(\hat \epsilon) &=\frac12 \beta^{-1} c(1+\phi_+(\hat \epsilon)) - \frac12 (1-\phi_+(\hat \epsilon)),\\
 K_3(z_3,\hat \epsilon)&=\frac12 bz_3 (1+\phi_+(\hat \epsilon)) - \frac12 \beta (1-\phi_+(\hat \epsilon)),\\
 L(\hat \epsilon) &=\frac12 (1+\phi_+(\hat \epsilon))+\frac12 b^{-1}\gamma (1-\phi_+(\hat \epsilon)),
\end{align*}
so that
\begin{align}
 J(0) &=\beta^{-1} c,\quad 
 K_3(z_3,0)=bz_3,\quad
 L(0) =1,\eqlab{JK3L0}
\end{align}
using $\phi_+(0)=1$. In the $(\bar y=1,\kappa_3)$-chart, $\widehat Q_3^{1,a}$ from \eqref{hatQ31a} becomes
\begin{align*}
 \widehat Q_3^{1,a}:\quad r_3=y_3=0,\,z_3=-\chi_+,\,\text{and}\,&\hat \epsilon 
 \in [0,h(-\chi_+)^{-1} ] \,\, \text{for}\,\,h(-\chi_+)>0, \\
 \text{or}\,&\hat \epsilon \in [0,\infty) \,\, \text{for}\,\,h(-\chi_+)\le 0,
\end{align*}
(extending it to $\hat \epsilon=0$). 
Then we have
\begin{lemma}\lemmalab{caseAFinalLemma}
The set 
\begin{align*}
 \widehat M_3^+:\quad r_3\ge 0,\,y_3 = 0,\,z_3\in  (0,\infty),\,\hat \epsilon = 0,
\end{align*}
is a set of critical points of \eqref{BarY1Kappa3Eqns} of saddle-type. The linearization about any point $(r_3,y_3,z_3,\hat \epsilon)\in \widehat M_3^+$ has only two non-zero eigenvalues
\begin{align*}
 \pm b z_3,
\end{align*}
with corresponding eigenvectors:
\begin{align}
 \begin{pmatrix}
  \beta^{-1} c r_3\\
  b z_3\\
  1-\beta^{-1} cz_3\\
  0
 \end{pmatrix},\,\begin{pmatrix}
  0\\
  0\\
  0\\
  1
 \end{pmatrix},\eqlab{M3PlusEigVec}
\end{align}
respectively.
Let 
\begin{align*}
 \hat q_3^{1,a}=(0,0,-\chi_+,0)\in \widehat M_3^+\cap \{r_3=0\}.
\end{align*}
Then $\widehat Q_3^{1,a}$ is the stable manifold of $\hat q_3^{1,a}$. On the other hand, the unstable manifold of $\hat q_3^{1,a}$ is 
\begin{align}
\widehat Q_3^{2,a} &= \bigg\{(r_3,y_3,z_3,\hat \epsilon)\vert r_3=\hat \epsilon=0,\, z_3 = c^{-1} \beta\left( -1+e^{-\beta^{-1} cs} \left(1 +\beta^{-1} c\chi_+\right)  \right) ,\nonumber\\
 &y_3 =  \frac12 c^{-2} b\beta^2 \left( \left(1+2\beta^{-1} c\chi_+ \right) e^{2\beta^{-1} cs}+1  - 2\left( 1+ \beta^{-1} c\chi_+ \right) e^{-\beta^{-1} cs}\right),\nonumber\\
 &\textnormal{for}\,s\in [0,\infty)\bigg\},\eqlab{widehatQ32p}
\end{align}
tangent to the vector $(0,
  -b\chi_+,
  1+\beta^{-1} c\chi_+,  0)^T$, see \eqref{M3PlusEigVec} with $r_3=0$, at $\hat q_3^{1,a}$. 

$\widehat Q_3^{2,a}$ is contained inside the $3D$ stable manifold of the hyperbolic equilibrium
\begin{align}
 \widehat u_3 = (0,\frac12 c^{-2}b \beta^2, c^{-1}\beta,0).\eqlab{widehatu3}
\end{align}
 The linearization of $\widehat u_3$ has $3$ negative eigenvalues: $-c^{-1} b\beta,-c^{-1} b\beta,-\frac12 -c^{-1} b\beta$ and $1$ positive eigenvalue $\frac12 c^{-1} b\beta$. The associated $1D$ unstable manifold is
\begin{align}
\widehat{U}_3 = \{(r_3,y_3,z_3,\hat \epsilon) \vert \hat \epsilon = 0,\,r_3\ge 0,\, y_3 = \frac12 c^{-2}b \beta^2,\,z_3= c^{-1}\beta\}.\eqlab{widehatU3}
\end{align}
%

\end{lemma}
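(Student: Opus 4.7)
The lemma is essentially a sequence of direct computations on the semi-explicit polynomial system \eqref{BarY1Kappa3Eqns}. The main idea is that $\{r_3=0,\hat\epsilon=0\}$ and $\{r_3=0\}$ are both invariant, so the analysis can be stratified: first identify the critical set in the invariant plane, then linearise, then integrate the reduced flow on each invariant submanifold, and finally read off the (un)stable manifolds in the ambient $4D$ chart.

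First I would verify that $\widehat M_3^+$ consists of equilibria by substitution: each of $\dot r_3,\dot y_3,\dot z_3$ carries a factor of $y_3$ and $\dot{\hat\epsilon}$ carries a factor of $\hat\epsilon$, so setting $y_3=\hat\epsilon=0$ kills the right-hand side of \eqref{BarY1Kappa3Eqns}. Linearising at a generic point of $\widehat M_3^+$ then yields a Jacobian whose only non-trivial block is $2\times 2$ in the $(y_3,\hat\epsilon)$ directions; using $J(0)=\beta^{-1}c$, $K_3(z_3,0)=bz_3$, $L(0)=1$ from \eqref{JK3L0}, the block is diagonal with entries $\pm bz_3$, and the stated eigenvectors follow from solving $(A-\lambda I)v=0$ in the $(r_3,y_3,z_3)$-subspace. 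The identification of $\widehat Q_3^{1,a}$ as the stable manifold of $\hat q_3^{1,a}$ is then immediate: on $\{r_3=y_3=0,\,z_3=-\chi_+\}$ the first three equations of \eqref{BarY1Kappa3Eqns} vanish identically while $\dot{\hat\epsilon}=-\hat\epsilon K_3(-\chi_+,\hat\epsilon)$, which drives $\hat\epsilon\to 0^+$ since $K_3(-\chi_+,0)=-b\chi_+>0$ (with $\chi_+<0$ by \eqref{chiInEq}); this line is tangent to the stable eigenvector $(0,0,0,1)^T$.

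To compute the unstable manifold $\widehat Q_3^{2,a}$, I restrict to the invariant plane $\{r_3=\hat\epsilon=0\}$, on which the system reduces to
\begin{align*}
\dot y_3 &= y_3(bz_3-2\beta^{-1}c\,y_3),\\
\dot z_3 &= y_3(1-\beta^{-1}c\,z_3).
\end{align*}
The key trick is to reparametrise time by $d\tau = y_3\,dt$ (valid on the unstable branch where $y_3>0$), which yields the \emph{linear} system
\begin{align*}
\tfrac{dy_3}{d\tau} &= bz_3-2\beta^{-1}c\,y_3,\\
\tfrac{dz_3}{d\tau} &= 1-\beta^{-1}c\,z_3,
\end{align*}
with initial data $(y_3,z_3)=(0,-\chi_+)$ at $\tau=0$. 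Solving the scalar $z_3$-equation first, then the $y_3$-equation by variation of parameters against the resulting exponential forcing, produces the explicit parametrisation in \eqref{widehatQ32p} (up to a straightforward sign/parametrisation relabelling of $s$). Passing to the limit $\tau\to\infty$ gives $z_3\to c^{-1}\beta$, $y_3\to \tfrac12 c^{-2}b\beta^2$, which identifies $\widehat u_3$ in \eqref{widehatu3} and shows $\widehat Q_3^{2,a}$ lies in its stable set. The tangent vector at $\hat q_3^{1,a}$ is recovered by differentiating the parametric formulas at $s=0$.

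Finally, to see that $\widehat u_3$ is hyperbolic with $1D$ unstable manifold $\widehat U_3$, I would linearise the full $4D$ system \eqref{BarY1Kappa3Eqns} at $\widehat u_3$. Since $\{r_3=0\}$ is invariant, the Jacobian is block triangular; the scalar $r_3$-block equals $y_3J(\hat\epsilon)|_{\widehat u_3}=b\beta/(2c)>0$, while the $3\times 3$ block in $(y_3,z_3,\hat\epsilon)$ is upper-triangular with diagonal entries $-2y_3J=-b\beta/c$, $-y_3J=-b\beta/(2c)$, and $-K_3(z_3,0)=-b\cdot c^{-1}\beta$, all negative. Thus $\widehat u_3$ has a $3D$ stable and a $1D$ unstable manifold; the latter is seen to coincide with $\widehat U_3$ because $(y_3,z_3,\hat\epsilon)=(\tfrac12 c^{-2}b\beta^2,c^{-1}\beta,0)$ renders the last three right-hand sides of \eqref{BarY1Kappa3Eqns} identically zero, leaving only $\dot r_3=(b\beta/(2c))\,r_3$, which is the linear unstable flow. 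The only mildly non-routine step is the time reparametrisation that linearises the planar subsystem; every other assertion is direct algebra, and no obstacle of substance is anticipated.
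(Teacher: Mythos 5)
Your proof is correct and follows essentially the same route as the paper's (which is largely left as ``straightforward calculations''): the key step in both is to restrict to the invariant plane $\{r_3=\hat\epsilon=0\}$, divide out the factor $y_3$ to obtain a linear system, solve it with initial data $(y_3,z_3)=(0,-\chi_+)$, and let $s\to\infty$ to identify $\widehat u_3$; your explicit solution in fact confirms that the intended formula has $z_3\to c^{-1}\beta$, consistent with \eqref{widehatu3}. The remaining linearization and invariance checks you supply are exactly the omitted ``straightforward calculations'' and are all correct.
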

\begin{proof}
 Straightforward calculations. In particular, we obtain $\widehat Q_3^{2,a}$ by setting $r_3=\hat \epsilon=0$ in \eqref{BarY1Kappa3Eqns}:
 \begin{align}
  \dot y_3 &=K_3(z_3)-2y_3 J(0)=bz_3 -2y_3 \beta^{-1}c,\eqlab{y3z3Eqns}\\
 \dot z_3 &=L(0)-z_3 J(0)=1-z_3\beta^{-1}c,\nonumber
 \end{align}
 using \eqref{JK3L0}, after division of the right hand side by $y_3$. Solving this linear system with the initial conditions
 \begin{align*}
  y_3(0) = 0,\,z_3(0) = -\chi_+,
 \end{align*}
 gives the parametrization  in \eqref{widehatQ32p} by the time $s$ in \eqref{y3z3Eqns}. Letting $s\rightarrow \infty$ gives \eqref{widehatu3}. 
\end{proof}
We illustrate the results in \lemmaref{caseAFinalLemma} in \figref{M3Plus}. Here we use an artistic sketch of the four dimensions. See caption for details.

\begin{figure}[h!] 
\begin{center}
{\includegraphics[width=0.6\textwidth]{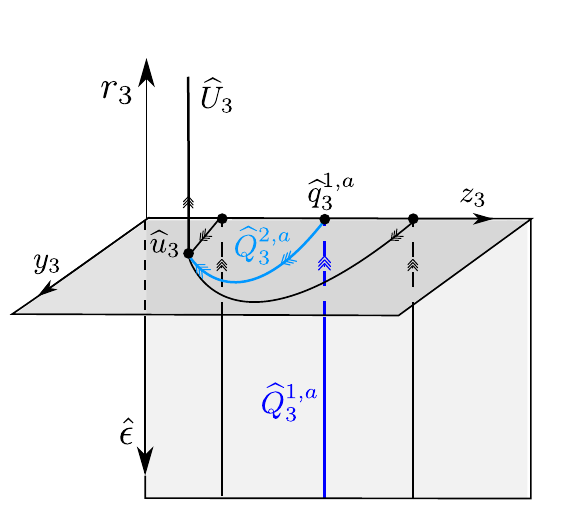}}
\end{center} 
 \caption{\ed{Schematic figure of the result in \lemmaref{caseAFinalLemma}. $\widehat Q_3^{1,a}$ (blue) is a subset of the $r_3=y_3=0$ plane (light grey), whereas $\widehat Q_3^{2,a}$ (cyan) is a subset of the $r_3=\hat \epsilon=0$ plane (darker grey). The point $\widehat u_3$ is hyperbolic with a $1D$ unstable manifold $\widehat U_3$ contained within the subset defined by $\hat \epsilon=0$}. }
\figlab{M3Plus}
\end{figure}

By blowing back down to chart $\bar y=1$ and the variables $(x,y,z,\hat \epsilon)$, we realize (see \appref{lemma1}) that $\widehat U_3$ becomes
$\widehat U$, as desired.  Then it is possible to guide the image $\widehat{\mathcal L}_{3,\text{out}}$ along $\widehat{Q}_3^{1,a}$, $\widehat{Q}_3^{2,a}$ and finally $\widehat U_3$ for $0<\epsilon\ll 1$. This is done by estimation of two transition maps: One near the normally hyperbolic set $\widehat M_3^+$ and one near $\widehat u_3$. Near $\widehat M_3^+$ we perform the estimation using a Fenichel-like normal form (by straightening out unstable fibers) and the fact that $\epsilon=r_3^2y_3\hat \epsilon$ in $(\bar y=1,\kappa_3)$. On the other hand, near the hyperbolic, but resonant, point $\widehat u_3$ we perform the estimation using a $C^1$ linearization in a neighbourhood of $\widehat u_3$ within the $\hat \epsilon=r_3=0$ subsystem (similar to \cite[Proposition 2.11]{krupa_extending2_2001}). Finally, upon blowing back down, we complete the proof of \thmref{mainThm} in case (a). We omit the details.


\subsection{Case (b)}\seclab{caseb}
We now turn our attention to case (b) of \defnref{caseiii} starting at $\widehat{\mathcal L}_{3,\epsilon}(\Pi_{in})\subset \{\hat y=-\delta^{-1}\}$. For this we note that $\hat y$ decreases along $\widehat Q_3^{1,b}\backslash\{\hat y=h(-\chi_+)\}$, see \eqsref{hatQ31b}{BarY1Kappa3Eqns}. Therefore to follow $\widehat Q_3^{1,b}$ forward we move from chart $(\bar \epsilon =1, \kappa_3)$ to chart $(\bar y=-1,\kappa_3)$ using the coordinate change in \eqref{cc}.

\subsubsection*{Chart $(\bar y=-1,\kappa_3)$}
Here we obtain the following equations from \eqref{xyzhatEps0Neg}:
\begin{align}
\dot r_3 &=-r_3 y_3 \Psi(\hat \epsilon),\eqlab{BarYN1Kappa3Eqns}\\
\dot y_3&=-y_3(\Delta_3(z_3,\hat \epsilon)-2y_3\Psi(\hat \epsilon)),\nonumber\\
\dot z_3&=-y_3(\Omega(\hat \epsilon)-z_3 \Psi(\hat \epsilon)),\nonumber\\
\dot{\hat \epsilon} &=\hat \epsilon \Delta_3(z_3,\hat \epsilon),\nonumber
\end{align}
where
\begin{align*}
 \Psi(\hat \epsilon) &=\frac12 \beta^{-1}c (1+\phi_-(\hat \epsilon))-\frac12 (1-\phi_-(\hat \epsilon)),\\
 \Delta_3(z_3,\hat \epsilon)&=\frac12 bz_3 (1+\phi_-(\hat \epsilon))-\frac12 \beta (1-\phi_-(\hat \epsilon)),\\
 \Omega(x,z,\hat \epsilon)&=\frac12 (1+\phi_-(\hat \epsilon))+\frac12 b^{-1}\gamma (1-\phi_-(\hat \epsilon)),
\end{align*}
so that
\begin{align}
 \Psi(0) &= -1,\quad \Delta(z_3,0)=-\beta,\quad \Omega(0) = b^{-1}\gamma,\eqlab{JK3L0New}
\end{align}
using that $\phi_-(0)=-1$. In the $(\bar y=-1,\kappa_3)$-chart, $\widehat Q_3^{1,b}$ from $(\bar \epsilon=1,\kappa_3)$, see \eqref{hatQ31b}, becomes
\begin{align*}
 \widehat Q_3^{1,b}:\quad r_3=y_3=0,\,z_3=-\chi_+,\,\text{and}\,&\hat \epsilon 
 \in [h(-\chi_+)^{-1} ,0] \,\, \text{for}\,\,h(-\chi_+)<0, \\
 \text{or}\,&\hat \epsilon \in (-\infty,0] \,\, \text{for}\,\,h(-\chi_+)\ge  0,
\end{align*}
(extending it to $\hat \epsilon=0$). Notice that $\widehat Q_3^{1,b}$ from $(\bar \epsilon=1,\kappa_3)$ is only partially covered by the $(\bar y=-1,\kappa_3)$-chart for $h(-\chi_+)\ge  0$. But as promised, we will continue to use the same symbol for this object in the new chart. 
\begin{lemma}
The set 
\begin{align*}
 \widehat M_3^-:\quad r_3\ge 0,\,y_3 = 0,\,z_3\in \mathbb R,\,\hat \epsilon = 0,
\end{align*}
is a set of critical points of \eqref{BarYN1Kappa3Eqns} of saddle-type. The linearization about any point $(r_3,y_3,z_3,\hat \epsilon)\in \widehat M_3^-$ has only two non-zero eigenvalues
\begin{align*}
 \pm \beta,
\end{align*}
with corresponding eigenvectors:
\begin{align}
 \begin{pmatrix}
  -r_3\\
  -\beta \\
 z_3+b^{-1}\gamma\\
  0
 \end{pmatrix},\,\begin{pmatrix}
  0\\
  0\\
  0\\
  1
 \end{pmatrix},\eqlab{M3MinusEigVec}
\end{align}
respectively.
Let 
\begin{align}
 \hat q_3^{1,b}=(0,0,-\chi_+,0)\in \widehat M_3^-\cap \{r_3=0\}.\eqlab{hatq31b}
\end{align}
Then $\widehat Q_3^{1,b}$ is the stable manifold of $\hat q_3^{1,b}$. On the other hand, the unstable manifold of $\hat q_3^{1,b}$ is 
\begin{align}
 \widehat Q_3^{2,b} &= \bigg\{(r_3,y_3,z_3,\hat \epsilon)\vert r_3=\hat \epsilon=0,\,z_3 = -b^{-1} \left(\gamma-(\gamma-b\chi_+)\sqrt{-2\beta^{-1} y_3+1}\right),\,y_3\le 0\bigg\},\eqlab{widehatQ32n}
\end{align}
tangent to the vector $(0,-\beta, -\chi_++\gamma,0)^T$, see \eqref{M3MinusEigVec} with $r_3=0$, at $\hat q_3^{1,b}$. 

%

\end{lemma}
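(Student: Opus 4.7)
The plan is to follow the same structure as the proof of \lemmaref{caseAFinalLemma}: every claim reduces to linear algebra at an equilibrium or to an explicit integration of a linear ODE on an invariant plane, so I do not anticipate any serious obstacle. The only bookkeeping that demands care is tracking the chart sign conventions inherited from $\bar y=-1$, namely $y_3\le 0$ and $\hat\epsilon\le 0$.

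First, I would verify that $\widehat M_3^-$ consists of equilibria by substituting $y_3=0,\,\hat\epsilon=0$ into \eqref{BarYN1Kappa3Eqns} and using $\Psi(0)=-1$, $\Delta_3(z_3,0)=-\beta$, $\Omega(0)=b^{-1}\gamma$ from \eqref{JK3L0New}. I would then compute the Jacobian at a generic point $(r_3,0,z_3,0)\in \widehat M_3^-$: the $r_3$ and $z_3$ columns vanish, because every right-hand side depends on $r_3$ only through the prefactor in $\dot r_3$ and depends on $z_3$ only multiplicatively through $y_3$ or $\hat\epsilon$. The remaining $y_3$ and $\hat\epsilon$ columns are triangular with diagonal entries $\beta$ and $-\beta$, respectively; back-substitution immediately yields the two stated eigenvectors for $\pm\beta$, while the kernel is two-dimensional and spanned by $\partial_{r_3}$ and $\partial_{z_3}$, i.e.\ by the tangent directions to $\widehat M_3^-$ itself.

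Second, since the eigenvector for $-\beta$ at $\hat q_3^{1,b}$ is $(0,0,0,1)^T$, the strong stable fibre at this point is an arc of the $\hat\epsilon$-axis. To identify it with $\widehat Q_3^{1,b}$ I would verify that the line $\{r_3=y_3=0,\,z_3=-\chi_+\}$ is invariant under \eqref{BarYN1Kappa3Eqns} and that the restricted flow $\dot{\hat\epsilon}=\hat\epsilon\Delta_3(-\chi_+,\hat\epsilon)$ with $\Delta_3(-\chi_+,0)=-\beta<0$ pulls every orbit in the sign-constrained range of $\hat\epsilon$ to $\hat q_3^{1,b}$ as $t\to\infty$. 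The precise upper endpoint of the interval in the definition of $\widehat Q_3^{1,b}$ is determined by the zero of $\Delta_3(-\chi_+,\cdot)$ away from $\hat\epsilon=0$, which exists exactly when $h(-\chi_+)<0$.

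For $\widehat Q_3^{2,b}$ I would restrict to the invariant plane $\{r_3=\hat\epsilon=0\}$ on which \eqref{BarYN1Kappa3Eqns} reduces to
\begin{align*}
\dot y_3 &=-y_3(-\beta+2y_3),\\
\dot z_3 &=-y_3(b^{-1}\gamma+z_3),
\end{align*}
with $\{y_3=0\}$ a curve of equilibria. After the desingularizing reparametrization $d\tau=(-y_3)\,dt$, which is valid because $y_3\le 0$, the system decouples into two scalar linear ODEs, $\tfrac{dy_3}{d\tau}=2y_3-\beta$ and $\tfrac{dz_3}{d\tau}=z_3+b^{-1}\gamma$. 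I solve them in closed form with initial datum $(y_3,z_3)(0)=(0,-\chi_+)$, giving $y_3(\tau)=\tfrac{\beta}{2}(1-e^{2\tau})$ and $z_3(\tau)=-b^{-1}\gamma+b^{-1}(\gamma-b\chi_+)e^\tau$, and then eliminate $\tau$ via $e^\tau=\sqrt{1-2\beta^{-1}y_3}$ (valid for $\tau\ge 0$, equivalently $y_3\le 0$) to recover \eqref{widehatQ32n}. The tangency statement at $\hat q_3^{1,b}$ then follows by evaluating $(\dot y_3,\dot z_3)$ in the desingularized time at $\tau=0$, which reproduces the unstable eigenvector computed in the first step.
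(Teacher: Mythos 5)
Your proposal is correct and follows essentially the same route as the paper's proof: straightforward linearization at $\widehat M_3^-$ for the eigenvalue/eigenvector claims, then restriction to the invariant plane $\{r_3=\hat\epsilon=0\}$, desingularization by the factor $-y_3\ge 0$, explicit solution of the resulting linear system with initial datum $(0,-\chi_+)$, and elimination of time to obtain \eqref{widehatQ32n}. Your tangent vector $(0,-\beta,-\chi_++b^{-1}\gamma,0)^T$ is the one consistent with the computed eigenvector; the lemma's displayed tangent omits the factor $b^{-1}$, which is a typo in the statement rather than an error in your argument.
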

\begin{proof}
 Straightforward calculations. In particular, we obtain $\widehat Q_3^{2,b}$ by setting $r_3=\hat \epsilon=0$ in \eqref{BarY1Kappa3Eqns}:
 \begin{align}
  \dot y_3 &=\Delta_3(z_3,0)-2y_3\Psi(0)=-\beta + 2y_3,\eqlab{y3z3Eqnsn}\\
 \dot z_3 &=\Omega(0)-z_3 \Psi(0)=b^{-1}\gamma+z_3,\nonumber
 \end{align}
 using \eqref{JK3L0New}, after division of the right hand side by $-y_3$. Solving this linear system with the initial conditions
 \begin{align*}
  y_3(0) = 0,\,z_3(0) = -\chi_+,
 \end{align*}
 gives the parametrization  in \eqref{widehatQ32n} upon elimination of time.
\end{proof}

We sketch the dynamics within $r_3=0$ in \figref{M3Neg}, illustrating the segments $\widehat Q_3^{1,b}$ and $\widehat Q_3^{2,b}$. 
\begin{figure}[h!] 
\begin{center}
{\includegraphics[width=0.6\textwidth]{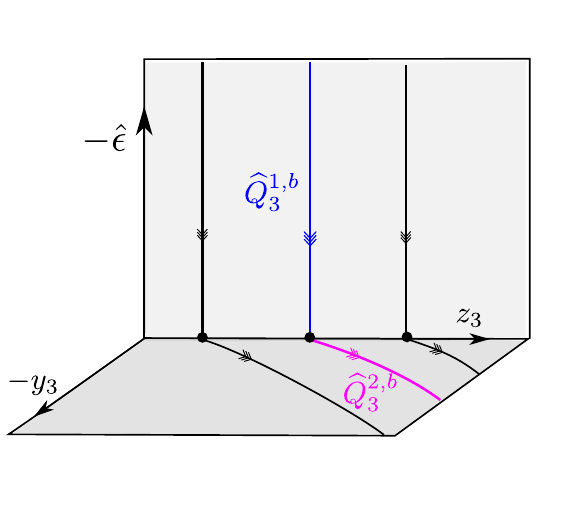}}
\end{center} 
 \caption{\ed{Dynamics within $r_3=0$}. }
\figlab{M3Neg}
\end{figure}

Now, $y_3$ decreases unboundedly along $\widehat Q_3^{2,b}$ in \eqref{widehatQ32n}. Working in chart  $(\bar y=-1,\kappa_2)$, we can follow $\overline{\widehat Q}^{2,b}$ into chart $(\bar y=-1,\kappa_1)$.

\subsubsection*{Chart $(\bar y=-1,\kappa_1)$}
 In this chart, $\overline{\widehat Q}^{2,b}$ becomes
\begin{align}
 \widehat Q_1^{2,b} &= \bigg\{(r_1,y_1,z_1,\hat \epsilon)\vert r_1=\hat \epsilon=0,\,z_1 = -b^{-1} \left(\gamma-(\gamma-b\chi_+)\sqrt{2\beta^{-1} y_1+1}\right),\,y_1\le 0\bigg\},\eqlab{widehatQ12n}
\end{align}
(extending it to $y_1=0$).
%
%
 We then have
\begin{lemma}
 In chart $(\bar y=-1,\kappa_1)$, the set 
\begin{align*}
 \widehat M_1^-:\quad r_1\ge 0,\,y_1 = 0,\,z_1\in \mathbb R,\,\hat \epsilon = 0,
\end{align*}
is a set of critical points of \eqref{BarYN1Kappa3Eqns} of saddle-type. The linearization about any point $(r_1,y_1,z_1,\hat \epsilon)\in \widehat M_1^-$ has only two non-zero eigenvalues
\begin{align*}
 \mp \beta,
\end{align*}
with corresponding eigenvectors:
\begin{align}
 \begin{pmatrix}
  \beta r_1\\
  -\beta \\
  z_1-b^{-1}\gamma\\
  0
 \end{pmatrix},\,\begin{pmatrix}
  0\\
  0\\
  0\\
  1
 \end{pmatrix},\eqlab{M1MinusEigVec}
\end{align}
respectively.
Let 
\begin{align}
 \hat q_1^{2,b}=(0,0,z_1^*,0)\in \widehat M_1^-\cap \{r_1=0\}.\eqlab{hatq12b}
\end{align}
Then $\widehat Q_1^{2,b}$ \eqref{widehatQ12n} is the stable manifold of $\hat q_1^{2,b}$, tangent to the vector $(0,-\beta,z_1^*-b^{-1}\gamma,0)^T$, see \eqref{M1MinusEigVec} with $r_1=0$, at $\hat q_1^{2,b}$. On the other hand, the (local) unstable manifold of $\hat q_1^{2,b}$ is 
\begin{align}
 \widehat Q_1^{3,b} &= \bigg\{(r_1,y_1,z_1,\hat \epsilon)\vert r_1=y_1= 0,\,z_1 = z_1^*,\,\hat \epsilon\in [-\theta,0]\},\eqlab{Q13b}
\end{align}
$\theta>0$ sufficiently small, 
tangent to the vector $(0,0,0,1)^T$, see \eqref{M1MinusEigVec}, at $\hat q_1^{2,b}$. 
\end{lemma}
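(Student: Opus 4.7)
The plan is to follow the template used for the $(\bar y=-1,\kappa_3)$ lemma, since the set-up in chart $(\bar y=-1,\kappa_1)$ is structurally analogous. First I would substitute the blowup $x=-r_1$, $y=r_1^2 y_1$, $z=r_1 z_1$ from \eqref{K1New} into \eqref{xyzhatEps0Neg}, differentiate, and solve for $\dot r_1$, $\dot y_1$, $\dot z_1$, $\dot{\hat\epsilon}$. After dividing the right-hand side by the common factor $2r_1$ to desingularise along $\{r_1=0\}$, this should produce equations of the form
\begin{align*}
\dot r_1 &= r_1 y_1 \Psi(\hat\epsilon),\\
\dot y_1 &= -y_1\bigl(\Delta_1(z_1,\hat\epsilon)+2y_1\Psi(\hat\epsilon)\bigr),\\
\dot z_1 &= -y_1\bigl(\Omega(\hat\epsilon)+z_1\Psi(\hat\epsilon)\bigr),\\
\dot{\hat\epsilon} &= \hat\epsilon\,\Delta_1(z_1,\hat\epsilon),
\end{align*}
with $\Psi$ and $\Omega$ the same as in chart $(\bar y=-1,\kappa_3)$ and $\Delta_1(z_1,\hat\epsilon)=\tfrac12 bz_1(1+\phi_-(\hat\epsilon))+\tfrac12\beta(1-\phi_-(\hat\epsilon))$, the sign of the $\beta$-term flipping relative to $\Delta_3$ because $x=-r_1$. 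Using $\phi_-(0)=-1$ one obtains $\Psi(0)=-1$, $\Delta_1(z_1,0)=\beta$, $\Omega(0)=b^{-1}\gamma$.

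That $\widehat M_1^-$ is a set of critical points is immediate since every right-hand side carries an explicit factor of $y_1$ or $\hat\epsilon$. Linearising at a generic $(r_1,0,z_1,0)\in\widehat M_1^-$, only the $y_1$-column and the $(\hat\epsilon,\hat\epsilon)$-entry are non-zero, producing a block-triangular Jacobian whose non-zero spectrum is $\{-\beta,\beta\}$: the $-\beta$ from $\partial_{y_1}\dot y_1=-\Delta_1(z_1,0)$ and the $+\beta$ from $\partial_{\hat\epsilon}\dot{\hat\epsilon}=\Delta_1(z_1,0)$. The $+\beta$-eigenvector is trivially $(0,0,0,1)^T$; the $-\beta$-eigenvector is obtained by back-substitution through the triangular system, giving the form displayed in the statement (with the $y_1$- and $z_1$-entries proportional to $-\beta$ and $z_1-b^{-1}\gamma$ respectively, and the $r_1$-entry determined by $\partial_{y_1}\dot r_1=r_1\Psi(0)$).

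For $\widehat Q_1^{2,b}$, the set $\{r_1=\hat\epsilon=0\}$ is invariant (both $\dot r_1$ and $\dot{\hat\epsilon}$ carry the corresponding factor) and reduces the dynamics to
\begin{align*}
\dot y_1 = -y_1(\beta-2y_1),\qquad \dot z_1 = -y_1(b^{-1}\gamma-z_1).
\end{align*}
Dividing these equations gives $dz_1/dy_1=(b^{-1}\gamma-z_1)/(\beta-2y_1)$, a first-order linear ODE which is separable in $w=b^{-1}\gamma-z_1$. Integrating and imposing the boundary condition at the saddle $\hat q_1^{2,b}=(0,0,z_1^*,0)$ produces a one-parameter family of the form \eqref{widehatQ12n}, parameterised by $y_1\le 0$. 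Tangency of this curve to the stable eigenvector at $\hat q_1^{2,b}$ then follows by differentiating the closed-form expression at $y_1=0$ and comparing with the eigenvector computed above, using $z_1^*-b^{-1}\gamma=-\chi_++\gamma/b$ from \eqref{z1chiN0}.

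Finally, for $\widehat Q_1^{3,b}$ I would observe that the half-line $\{r_1=y_1=0,\ z_1=z_1^*,\ \hat\epsilon\in[-\zeta,0]\}$ is invariant: on it $\dot r_1=\dot y_1=\dot z_1=0$, while $\dot{\hat\epsilon}=\hat\epsilon\Delta_1(z_1^*,\hat\epsilon)=\beta\hat\epsilon+\mathcal O(\hat\epsilon^2)$ drives the motion in $\hat\epsilon$. Hence this half-line is the one-dimensional unstable manifold of $\hat q_1^{2,b}$, tangent to $(0,0,0,1)^T$ at $\hat q_1^{2,b}$. The whole argument is conceptually a direct transcription of the $(\bar y=-1,\kappa_3)$ case; the only real difficulty is bookkeeping, namely tracking the sign flip induced by $x=-r_1$ and the repeated time rescalings used to desingularise.
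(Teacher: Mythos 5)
Your proposal is correct and follows essentially the same route as the paper, whose proof consists precisely of deriving the chart-$(\bar y=-1,\kappa_1)$ equations \eqref{BarYN1Kappa1Eqns} with $\Delta_1(z_1,0)=\beta$ and then invoking "simple calculations", which you spell out (critical set, block-triangular Jacobian, separation of variables on $\{r_1=\hat\epsilon=0\}$, and the invariant $\hat\epsilon$-line). One caveat: you assert without computing that the integration "produces a one-parameter family of the form \eqref{widehatQ12n}", but carrying it out with the condition $z_1=z_1^*$ at $y_1=0$ actually yields $z_1 = b^{-1}\gamma + b^{-1}(\gamma-b\chi_+)\sqrt{1-2\beta^{-1}y_1}$, which agrees with \eqref{widehatQ12n} only up to apparent sign typos in that display (as printed it gives $z_1=-\chi_+\neq z_1^*$ at $y_1=0$ and its square root is not real for $y_1<-\beta/2$), so you should perform the integration explicitly rather than assert agreement.
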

\begin{proof}
 In $(\bar y=-1,\kappa_1)$, we have from \eqref{xyzhatEps0Neg} that
\begin{align}
\dot r_1 &=r_1 y_1 \Psi(\hat \epsilon),\eqlab{BarYN1Kappa1Eqns}\\
\dot y_1&=-y_1(\Delta_1(z_1,\hat \epsilon)+2y_1\Psi(\hat \epsilon)),\nonumber\\
\dot z_1&=-y_1(\Omega(\hat \epsilon)+z_1 \Psi(\hat \epsilon)),\nonumber\\
\dot{\hat \epsilon} &=\hat \epsilon \Delta_1(z_1,\hat \epsilon),\nonumber
\end{align}
where
\begin{align*}
 \Delta_1(z_1,\hat \epsilon)&=\frac12 bz_1 (1+\phi_-(\hat \epsilon))+\frac12 \beta (1-\phi_-(\hat \epsilon)),\\
\end{align*}
so that
\begin{align*}
 \Delta_1(z_1,0)=\beta,
\end{align*}
using that $\phi_-(0)=-1$. We then obtain the result through simple calculations. 
\end{proof}
\begin{remark}
 Notice that $\overline{\overline Q}^{2,b}$ is a heteroclinic connection on the sphere $\overline{\overline q}\cap \{(\bar y,\bar \epsilon)=(-1,0)\}$, connecting $\hat q_3^{1,b}$ \eqref{hatq31b} in chart $(\bar y=-1,\kappa_3)$ with $\hat q_1^{2,b}$ \eqref{hatq12b} in $(\bar y=-1,\kappa_1)$. This connection is in agreement with \lemmaref{sigma0} and \eqref{Dsigmav}. Indeed, the linear mapping $D\vartheta(0)$ in \lemmaref{sigma0}, maps $v_+$ to $(-1,\chi_+)^T$. In the projective coordinates, $z_3=x^{-1} z,\,x>0$ and $z_1=-x^{-1}z,\,x<0$, this assignment becomes $z_3=-\chi_+\mapsto z_1 = z_1^*$, or simply $\hat q_3^{1,b}\mapsto \hat q_1^{2,b}$ for $r=0$.  
\end{remark}

We sketch the dynamics within $r_1=0$ in \figref{M1Neg}, illustrating the segments $\widehat Q_1^{2,b}$ and $\widehat Q_1^{3,b}$. 
\begin{figure}[h!] 
\begin{center}
{\includegraphics[width=0.6\textwidth]{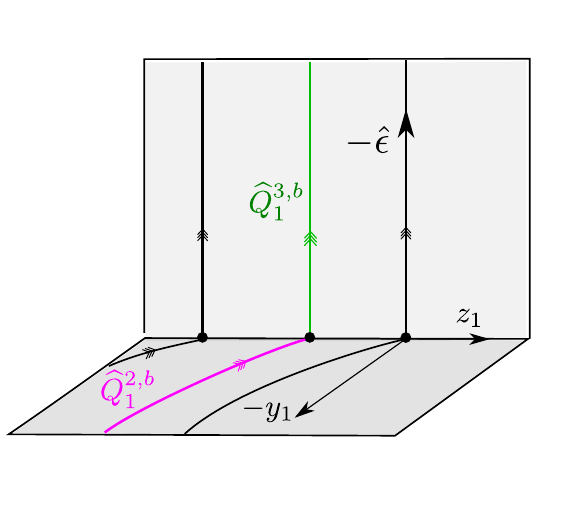}}
\end{center} 
 \caption{\ed{Dynamics within $r_1=0$}. }
\figlab{M1Neg}
\end{figure}

The variable $\hat \epsilon\le 0$ decreases along $\widehat Q_1^{3,b}$. 
Following the discussion proceeding \lemmaref{sigma0}, the extension of this manifold by the forward flow now depends on the sign of $z_1^*$. The details of the separate cases $z_1^*<0$, $z_1^*=0$ and $z_1^*>0$ are given in \appref{z1StarPos}, \appref{z1Star0} and \appref{z1StarNeg}, respectively. The case $z_1^*>0$ is similar to case (a). The case $z_1^*\le 0$ is more involved due to the nonhyperbolicity of 
\begin{align*}
 \hat q_1^{4,b}:\quad r_1=y_1 = z_1=\hat \epsilon=0,
\end{align*}
in chart $(\bar y=1,\kappa_1)$. Using the coordinates in the chart $(\bar y=1,\kappa_1)$, this point corresponds to the intersection of the nonhyperbolic line (of visible folds) $\bar l^+$ (see \eqref{hatlPlus} and \lemmaref{hatMP}) with the $\bar x<0$ subset of the blown up two-fold $\overline{\overline q}$. Therefore we will have to blowup this point in \appref{z1StarPos} to obtain a complete, hyperbolic, singular picture. 


\section{Discussion and conclusion}\seclab{discuss}
In this paper, we consider the PWS visible-invisible two-fold in the truncated, piecewise linear, normal form \bpwl{} satisfying assumption (A) as a singular limit of the regularized system \brpwl{}. We restrict attention to the regularization function $\phi(s) = \frac{2}{\pi}\arctan(s)$ and assume a non-degeneracy condition (B). Then our main result \thmref{mainThm} states that, as the regularized system tends to the PWS system, there is a distinguished forward trajectory $U$, shown in \figref{VISliding}, among all the candidates leaving the two-fold. 

Our approach to the problem is new, because we combine two separate blowups\footnote{Consecutive blowups can be used to study other singular perturbation phenomenon in different regularizations of piecewise smooth systems, see e.g. \cite{kriBlowup}.}. The first blowup \eqref{blowup0} resolves the singularity at $y=0,\,\epsilon=0$. Then we obtain the two-fold as a circle of nonhyperbolic critical points in the blowup space. 
The second blowup \eqref{blowup1} is in the sense Dumortier, Roussarie, Krupa and Szmolyan \cite{dumortier_1993,dumortier_1996,krupa_extending_2001} used to study nonhyperbolic critical points. We blow up the circle of nonhyperbolic critical points to a circle of spheres. By selecting appropriate weights associated with the blowup, we use desingularization to gain hyperbolicity. 

It is possible to obtain our main result \thmref{mainThm} if we relax assumption (A), and replace it with $c-\gamma \le \sqrt{(c-\gamma)^2-4b\beta}$. In this case $\chi_-\ge 0$ in \eqref{chipm} and so we always have $z_1^*>0$ in \eqref{z1chiN0}. Then there is no strong canard $\gamma^s$ for the PWS system and {\em any} orbit of $X_{sl}$ is tangent to $v_+$ at $q$. In contrast, both $z_1^*>0$ and $z_1^*\le 0$ are possible for the more complicated case given by assumption (A).

The non-degeneracy condition (B) is independent of the regularization function $\phi(s)$. Together with the position of $R_{\text{in},\epsilon}$ in relation to the span of $v_+$, the parameter $\xi$ determines whether the forward flow of the regularization follows $X^+$ directly beyond $q$ (case (a) of \defnref{caseiii}) or whether a twist occurs where the forward orbit first follows $X^-$ before returning to $\Sigma^+$ and $X^+$ (case (b) of \defnref{caseiii}). The case $\xi \in \mathbb N$, which is excluded by (B), is at the boundary of these two separate cases. Here additional (secondary) canards appear (see \cite{krihog}) which complicates the analysis further.

Our result can be extended in a number of ways. For example, the result holds true for other regularization functions, including the Sotomayor and Teixeira regularization functions, see \defnref{STphi}. This involves only minor modifications. In fact, for the Sotomayor and Teixeira regularization functions, the scaling chart \eqref{yhat} associated with the blowup \eqref{blowup0} is (by \eqref{XepsProp}) enough to prove the theorem. We can also easily extend the result to other non-Sotomayor and Teixeira regularization functions that satisfy: There exists a smallest $k_+ \in \mathbb N$ ($k_-\in \mathbb N$) such that the $k_+$th-derivative ($k_-$-derivative) of $\phi_+$ ($\phi_-$, respectively) is non-zero at $u=0$: $\phi_\pm^{(k_\pm)}(0)\ne 0$ where $$\phi(s) = \phi_\pm(s^{-1}),\,s\gtrless 0,$$ as in \eqref{phipmProp}. (For $\phi=\arctan$ we have $k_\pm = 1$.) The resulting algebraic decay of $\phi$ at $\pm \infty$ enables us to extend \appref{z1StarPos} and the blowup of $\hat q_1^{4,b}$ fairly easy (we just have to change the weights in \eqref{finalblowup}). But for the regularization function $\phi(s) = \tanh(s)$ we have $\phi_\pm^{(k)}(0)=0$ for every $k\in \mathbb N$. To blowup $\hat q_1^{4,b}$ for this regularization function, one way forward would be to use the approach in \cite{kriBlowup} for blowup of flat slow manifolds.



%





We could also replace the PWL system \bpwl{} with the full nonlinear PWS system \bpws{}. In fact, the dynamics on $\overline{\overline q}$ is completely unchanged if we replace \bpwl{} by \bpws{}. We still obtain $\overline{\overline Q}^{j,a}$ and $\overline{\overline Q}^{j,b}$ with identical hyperbolicity properties. 
In \cite[Theorem 7.1]{krihog}, we showed that if the non-degeneracy condition (B) holds then the lines $\hat{\gamma}^{w,s}$ in \lemmaref{lines} perturb into a
\textit{weak canard} $\hat{\gamma}^w(\epsilon)$ and a \textit{strong canard} $\hat{\gamma}^s(\epsilon)$, respectively, for $\epsilon>0$ sufficiently small. 
These orbits are transverse intersections of extended versions of the (now non-unique) Fenichel slow manifolds $\widehat S_{a,\epsilon}$ and $\widehat S_{r,\epsilon}$, similar to \eqsref{Saeps1}{Sreps1}. 
Their projections onto the $(x,z)$-plane have tangents at $(x,z)=0$ that are $\mathcal O(\sqrt{\epsilon})$-close to the eigenvectors strong/weak eigenvectors $v_{\pm}$. 
For the regularization of the full nonlinear PWS system \bpws{}, the strong canard tends to the unique solution of the sliding equations that are tangent to the strong eigenvector $v_-$ at the two-fold, as $\epsilon\rightarrow 0$; compare \propref{pwsCanards}(a). But the limit of the weak canard $\hat{\gamma}^w(\epsilon)$ is more complicated. There is a whole funnel of singular weak canard candidates, recall \propref{pwsCanards}(b), that $\gamma^w(\epsilon)$ can limit to. Hence a priori, for general initial conditions within the funnel, it is impossible to determine on what side of the canard the initial conditions belong to. This is important for the generalisation of our results to the regularization of \bpws{}. To handle this, we propose to add a condition of the form
\begin{itemize}
\item[(D)] There exists a $K>0$ sufficiently large so that 
\begin{align*}\text{dist}\,(\Pi_{\text{in},\epsilon},\hat{\gamma}^w(\epsilon))\ge K^{-1}>0,\end{align*} for all $0<\epsilon\le \epsilon_0$ sufficiently small.
\end{itemize}
Unfortunately, such a condition is implicit. In particular, condition (D) will, interestingly, most likely depend upon the choice of regularization function. We do not need condition (D) when we use the truncation \bpwl{} because there the weak canard $\hat{\gamma}^w$ \eqref{gammaw} is explicitly known and independent of $\epsilon$. Similar issues arise with weak canards of folded nodes in standard slow-fast systems in $\mathbb R^3$, see \cite{brons-krupa-wechselberger2006:mixed-mode-oscil, Vo}. The authors of \cite{brons-krupa-wechselberger2006:mixed-mode-oscil} also (implicitly) assume \cite{morten} a condition like (D) in their Theorem 4.1. 



\subsection*{Acknowledgement}We would like to thank an anonymous referee whose many suggestions have greatly improved the manuscript.

\bibliography{refs}
\bibliographystyle{plain}
\newpage
\appendix

\section{Proof of \lemmaref{sigma0}}\applab{lemma0}
$\vartheta$ is well-defined since $l^-= \{(x,y,z)\in \mathbb R^3\vert x=y=0\}$ is an invisible fold line. See \figref{VISliding}. 
 The first part of the result therefore follows from simple calculations. 
 \eqref{Dsigmav} is obtained by differentiating \eqref{eq.sigma0Neg}. The inequality for $z_1^*$ in \eqref{z1chiN0} is obtained from \eqref{chipm}, the positivity of $b$ and using (A). Indeed
 \begin{align*}
 b(z_1^*- \chi_-)=2\gamma + c-\gamma = c+\gamma>\sqrt{(c-\gamma)^2-4b\beta}>0 
 \end{align*}
using \eqref{cgamma} in the last two inequalities.  
\section{Details for the proof of \propref{WidetildeX}}\applab{appHALLO}
\subsection{The chart $\bar y=-1$}\seclab{BarYNeg1}
Inserting \eqref{chartEpshat} into \brpwlf{} (together with the trivial equation $\epsilon'= 0$), we obtain the following equations using \eqref{phipm}:
\begin{align}
x'&= -y (\beta^{-1} c(1+\phi_-(\hat \epsilon))-(1-\phi_-(\hat \epsilon))),\eqlab{xyzhatEps0Neg}\\
y'&= -y (b {z} (1+\phi_-(\hat \epsilon)) -\beta  x(1-\phi_-(\hat \epsilon))),\nonumber\\
 z'&=-y  ((1+\phi_-(\hat \epsilon))+b^{-1} \gamma(1-\phi_-(\hat \epsilon))),\nonumber\\
\hat \epsilon' & =\hat \epsilon (b {z} (1+\phi_-(\hat \epsilon)) -\beta  x(1-\phi_-(\hat \epsilon))),\nonumber
\end{align}
where $(y,\hat \epsilon) \in (-\infty,0]^2$, after division of the right hand side by the common factor $-\hat \epsilon$.  

\begin{remark}
The flow of system \eqref{xyzhatEps0Neg} preserves $\epsilon=y\hat \epsilon$, so $\epsilon=0$ implies either $y=0$ or $\hat \epsilon=0$. The corresponding sets $\{y=0\}$ and $\{\hat \epsilon=0\}$ are invariant. 

Within $\{\hat \epsilon=0\}$ we recover the vector field $X^-$ of \bpwl{}$_{y<0}$ from \eqref{xyzhatEps0Neg}:
\begin{align}
\dot x&=-1,\eqlab{XNegativeNew}\\
\dot y &= -\beta x,\nonumber\\
\dot z &=b^{-1}\gamma,\nonumber
\end{align}
after further division of the right hand side by $-2y< 0$, using $\phi_-(0)=-1$. 
%
%

Within $\{y=0\}$ we recover $\widehat S_{0}\cap \{\hat y<0\}$ from \propref{criticalManifold} as a set of critical points, having the same hyperbolicity properties, upon the  coordinate change \eqref{hatEpshatY}. In particular,
\begin{align}
 \hat q:\,x=z=y=0, \, \hat \epsilon\le 0,\eqlab{newqNeg}
\end{align}
(extending it to $\hat \epsilon=0$ where it remains nonhyperbolic)
and
\begin{align*}
 \widehat S_{a,r}:\, \hat \epsilon = h_-(x^{-1}z),\,\quad (x,y,z)\in \Sigma_{sl}^\pm\cap \{x^{-1}z \in (0,b^{-1} \beta)\},
\end{align*}
respectively, 
with $h_-:(0,b^{-1}\beta)\rightarrow (-\infty,0]$ defined by
\begin{align*}
  h_-(s)= \phi_-^{-1}\left(\frac{1-\beta^{-1} b s}{1+\beta^{-1} b s}\right).
\end{align*}
\end{remark}
In the chart $\bar y=-1$, along the intersection $\{y=\hat \epsilon=0\}$ of the invariant sets $\{y=0\}$ and $\{\hat \epsilon=0\}$, we obtain the following.
\begin{lemma}
 The set $\widehat M^- \equiv \{y=\hat \epsilon =0\}$ is a set of critical points of \eqref{xyzhatEps0Neg}. It is of saddle-type for $x\ne 0$: The linearization about any point in $\widehat M^-$ has only two non-trivial eigenvalues $\pm 2bx$ with associated eigenvectors
 \begin{align*}
  \begin{pmatrix}
   1\\ 
\beta x\\ 
-b^{-1} \gamma\\
0
  \end{pmatrix},\begin{pmatrix}
0\\
0\\
0\\
1
  \end{pmatrix},
 \end{align*}
respectively. The line 
\[\hat l^-=\widehat M^-\cap \{x=0\},\] is nonhyperbolic: The linearization about any point in $\hat l^-$ has only zero eigenvalues. It becomes $l^-$ upon returning to the $(x,y,z)$-variables for $\epsilon=0$.
\end{lemma}
\begin{proof}
 Simple calculations.
\end{proof}

\section{A lemma}\applab{lemma1}

\begin{lemma}
The forward orbit $U$ of $X^+$, becomes 
\begin{align}
 \widehat U &= \left\{(x,y,z,\hat \epsilon)=\left(r_3,\frac12  r_3^2 c^{-2}b \beta^2,r_3 c^{-1}\beta,0\right) \vert\, r_3\in [0,\infty)\right\}\eqlab{Uexpr3}
 \end{align}
 or equivalently
 \begin{align}
 \widehat U&=\left\{(x,y,z,\hat \epsilon)=\left(r_2\beta^{-1} c \sqrt{2b^{-1}},r_2^2 ,r_2\sqrt{2b^{-1}},0\right) \vert\, r_2\in [0,\infty)\right\},
 \eqlab{Uexpr2}
\end{align}
in the chart $\bar y=1$.
\end{lemma}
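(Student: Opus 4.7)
\medskip

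\noindent\textbf{Proof proposal.} The plan is to carry out an entirely direct computation: first integrate $X^+$ starting from $q=(0,0,0)$, then express the resulting orbit in the charts $(\bar y=1,\kappa_3)$ and $(\bar y=1,\kappa_2)$. No additional machinery is needed beyond the explicit form of $X^+$ in the truncated system \eqref{XNPositiveT} and the chart definitions \eqref{K2New}, \eqref{K3New}.

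First I would integrate the linear system $\dot x=\beta^{-1}c$, $\dot y=bz$, $\dot z=1$ with initial condition $(0,0,0)$, obtaining the explicit parametrization
\begin{align*}
U = \left\{\left(\beta^{-1}c\,t,\,\tfrac{1}{2}b\,t^{2},\,t\right)\,\Big|\,t\in[0,\infty)\right\}.
\end{align*}
Since $\epsilon=0$ along $U$ and $y>0$ for $t>0$, the relation $\epsilon=y\hat\epsilon$ in chart $\bar y=1$ (see \eqref{chartEpshat2}) forces $\hat\epsilon=0$ on $\widehat U$.

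Next, in the sub-chart $(\bar y=1,\kappa_3)$ defined by $x=r_3,\ y=r_3^{2}y_3,\ z=r_3z_3$, I read off $r_3=\beta^{-1}c\,t$ from the $x$-equation. Substituting into the $y$- and $z$-equations yields the constants
\begin{align*}
z_3 = \frac{z}{r_3}=\frac{t}{\beta^{-1}c\,t}=c^{-1}\beta,\qquad
y_3 = \frac{y}{r_3^{2}}=\frac{\frac12 bt^{2}}{(\beta^{-1}c\,t)^{2}}=\frac{1}{2}c^{-2}b\beta^{2},
\end{align*}
independent of $t$. Parametrizing by $r_3\in[0,\infty)$ rather than $t$ then gives exactly \eqref{Uexpr3}, which confirms in passing that $\widehat U_3$ (see \eqref{widehatU3}) pulls back to $\widehat U$.

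For the second form, I would either invoke the coordinate change \eqref{kappa32New} between $(\bar y=1,\kappa_2)$ and $(\bar y=1,\kappa_3)$, which gives $x_2=y_3^{-1/2}=\beta^{-1}c\sqrt{2b^{-1}}$ and $z_2=z_3x_2=\sqrt{2b^{-1}}$ (with $r_2=r_3x_2^{-1}$ free), or equivalently redo the calculation directly in $(\bar y=1,\kappa_2)$ via $x=r_2x_2,\ y=r_2^{2},\ z=r_2z_2$: from $y=r_2^{2}=\tfrac12 bt^{2}$ we get $r_2=t\sqrt{b/2}$, and then
\begin{align*}
x_2=\frac{x}{r_2}=\beta^{-1}c\sqrt{2b^{-1}},\qquad z_2=\frac{z}{r_2}=\sqrt{2b^{-1}},
\end{align*}
which are precisely the constants appearing in \eqref{Uexpr2}. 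Since this is purely computational, there is no genuine obstacle; the only point to be careful about is the bookkeeping between the several blow-up charts and the observation that the two expressions \eqref{Uexpr3} and \eqref{Uexpr2} are simply two different projective parametrizations of the same one-dimensional analytic curve in $\{\hat\epsilon=0\}$.
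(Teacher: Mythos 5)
Your proposal is correct and follows essentially the same route as the paper: integrate the linear field $X^+$ from $q=(0,0,0)$ to get $(x,y,z)=(\beta^{-1}ct,\tfrac{b}{2}t^2,t)$, then eliminate time via $x=r_3$ (resp. $y=r_2^2$) to obtain \eqref{Uexpr3} (resp. \eqref{Uexpr2}); your extra verification that $y_3,z_3$ and $x_2,z_2$ are constant along the orbit is just a reformulation of the same computation. The observation that $\epsilon=0$ forces $\hat\epsilon=0$ and the cross-check via \eqref{kappa32New} are both sound.
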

\begin{proof}
By eliminating $z$ through $x(z)=r_3$ in \eqref{Uexpr},  we obtain \eqref{Uexpr3}. Similarly, eliminating $z$ through $y(z)=r_2^2$ in \eqref{Uexpr} gives \eqref{Uexpr2}. 
\end{proof}

\section{Case (b) with $z_1^*< 0$}\applab{z1StarPos}
Consider first chart $(\bar \epsilon=1,\kappa_1)$ and the equations \eqref{BarEps1Kappa1Eqns}. Then in the case under consideration, we have $\hat y'=0$ for $\hat y=h(-z_1^*)\in \mathbb R$ within $r_1=\epsilon_1=0,\,z_1=z_1^*$. Therefore 
$\widehat Q_1^{3,b}:\, r_1=\epsilon_1= 0,\,z_1 = z_1^*,\,\hat y\in (-\infty,h(-z_1^*)]$ (extending it to $\hat y=h(-z_1^*)$), in the chart $(\bar \epsilon = 1,\kappa_1)$, is a hyperbolic fiber of
\begin{align}
 \hat q_1^{3,b}:\,r_1=\epsilon_1=0,\,z_1=z_1^*,\,\hat y=h(-z_1^*),\eqlab{hatq13b}
\end{align}
belonging to the normally hyperbolic and attracting line $\widehat L_{a,1}$ \eqref{La1}. On $\widehat L_{a,1}$, we obtain a slow flow by \eqref{reducedSaeps1}$_{\epsilon_1=0}$. Now, $z_1=\chi_-$ is an unstable node for \eqref{reducedSaeps1}$_{\epsilon_1=0}$. But then since $z_1^*>\chi_-$, recall \eqref{z1chiN0}, we have  $z_1'>0$ at \eqref{hatq13b}. We therefore obtain the following subsequent singular orbit segment
\begin{align*}
 \widehat Q_1^{4,b}:\,r_1=\epsilon_1=0,\,\hat y = h(-z_1),\,z_1 \in [z_1^*,0).
\end{align*}
The variable $\hat y$ increases unboundedly on $\widehat Q_1^{4,b}$ since $z_1$ increases by the slow flow. We therefore move to the chart $(\bar \epsilon=1,\kappa_1)$. We illustrate the results in $(\bar \epsilon=1,\kappa_1)$ within $r_1=\epsilon_1=0$ in \figref{Q4}. 

\begin{figure}[h!] 
\begin{center}
{\includegraphics[width=0.55\textwidth]{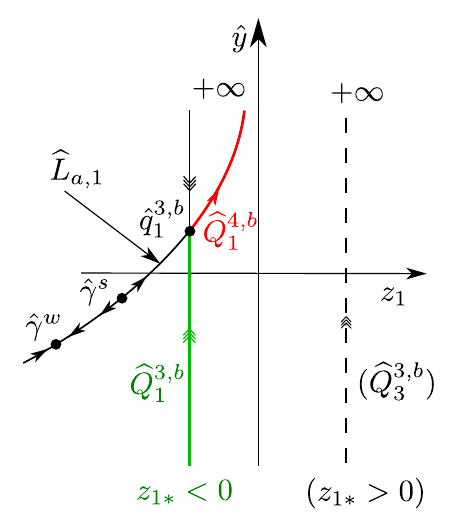}}
\end{center} 
 \caption{\ed{Dynamics within $r_1=\epsilon_1$. For $z_1^*<0$ $\widehat Q_1^{3,b}$ (green) is a hyperbolic stable fiber of a base-point on $\widehat L_{a,1}$. The base point is by \eqref{z1chiN0} contained outside the funnel region and the slow flow along $\widehat Q_{1}^{4,b}$ (red) therefore moves towards the ``visible'' fold at $z_1=0$.  Note, on the other hand, if $z_1^*\ge 0$ (considered in \appref{z1Star0} and \appref{z1StarNeg}) then we have $\hat y'>0$ along the orbit $\widehat Q_1^{3,b}$ (illustrated as dashed line for $z_1^*>0$)}.  }
\figlab{Q4}
\end{figure}

\subsubsection*{Chart $(\bar \epsilon=1,\kappa_1)$}
In this chart, the dynamics is described by \eqref{BarY1Kappa1Eqns}. Within $r_1=y_1=0$ we then re-discover the normally hyperbolic and attracting line of critical points 
\begin{align*}
 \widehat L_{a,1}:\,r_1=y_1=0,\,\hat \epsilon = h_+(-z_1),\,z_1 \in (-b^{-1} \beta,0),
\end{align*}
containing $\widehat Q_1^{4,b}$. The slow flow on $\widehat L_{a,1}$, described by \eqref{reducedSaeps1}$_{\epsilon_1=0}$, reaches the boundary point at $z_1=0$:
\begin{align}
 \hat q_1^{4,b}:\,r_1=y_1 = z_1=\hat \epsilon=0,\eqlab{qq1}
\end{align}
in finite time. The point $\hat q_1^{4,b}$ {is due to the nonhyperbolicity of $\widehat l^+$ \eqref{hatlPlus} also nonhyperbolic}. To describe the dynamics near $\hat q_1^{4,b}$, we apply the following blowup transformation $(r_1,y_1,z_1,\hat \epsilon)\mapsto (r_1,\varrho,(\bar y_1,\bar z_1,\bar{\hat \epsilon}))$ defined by:
 \begin{align}
  y_1 = \varrho^2 \bar y_1,\,z_1 = \varrho \bar z_1,\,\hat \epsilon = \varrho \bar{\hat \epsilon},\,\varrho \ge 0,\,(\bar y_1,\bar z_1,\bar{\hat \epsilon})\in S^2,\eqlab{finalblowup}
 \end{align}
 and apply desingularization through the division of the right hand side by $\varrho$. 
 Notice that the $r_1$-equation decouples in our simplified setting and that the blowup does not involve $r_1$. We describe the blowup using the directional charts
 \begin{align}
  (\bar y=1,\kappa_{11}):\,y_1 &= \varrho_1^2 y_{11},\,z_1 = \varrho_1 z_{11},\,\hat \epsilon = \varrho_1,\eqlab{chartk11}\\
  (\bar y=1,\kappa_{12}):\,y_1 &= \varrho_2^2,\,z_1 = \varrho_2 z_{12},\,\hat \epsilon = \varrho_2\hat \epsilon_2,\eqlab{chartk12}
 \end{align}
obtained by setting $\bar{\hat \epsilon}=1$ and $\bar y_1=1$, respectively. We have the following coordinate change:
\begin{align}
 y_{11}=\hat \epsilon_2^{-2},\,z_{11} = z_{12}\hat \epsilon_2^{-1},\,\varrho_2 = \varrho_1 \sqrt{y_{11}},\eqlab{kappa1112}
\end{align}
for $\hat \epsilon_2>0$. We consider each of the charts in the following.

\subsubsection*{Chart $(\bar y=1,\kappa_{11})$}
In this chart, we obtain the following equations:
  \begin{align*}
  \dot r_1&= -r_1 \varrho_1 y_{11} J(\varrho_1),\\
\dot y_{11} &=2y_{11} (K_{11}(z_{11},\varrho_1)+\varrho_1 y_{11} J(\varrho_1)),\\
\dot z_{11}&=y_{11}(K_{11}(z_{11},\varrho_1)+\varrho_1 y_{11} J(\varrho_1))+z_{11} K_{11}(z_{11},\varrho_1),\\
\dot \varrho_1 &=-\varrho_1 K_{11}(z_{11},\varrho_1),
  \end{align*}
    where
  \begin{align*}
   K_{11}(z_{11},\varrho_1) = bz_{11}(1-\pi^{-1}{\varrho_1}(1+\phi_2(\varrho_1))))+\pi^{-1}\beta(1+\phi_2(\varrho_1)),
  \end{align*}
  cf. \eqref{phipm}.
\begin{lemma}
 The unique slow manifold $S_{a,\epsilon,1}$ (see \eqref{Saeps1} for the expression of $S_{a,\epsilon,1}$ in chart $(\bar \epsilon=1,\kappa_1)$)  can be extended into $\kappa_{11}$ as a hyperbolic and attracting invariant manifold:
 \begin{align}
  z_{11} =  -\beta (b\pi)^{-1}(1+l_1(\varrho_1)))+y_{11} (\pi \beta^{-1} + y_{11} l_2(y_{11},\varrho_1)),\quad r_1\in [0,\delta],\,\varrho_1 \in [0,\mu],\,y_{11} \in [0,\xi], \eqlab{z11Manifold}
 \end{align}
 for $\mu>0$ and $\xi>0$ sufficiently small. 
The intersection of \eqref{z11Manifold} with the invariant sub-space $\{r_1=\varrho_1=0\}$: 
\begin{align}
 z_{11} = -\beta (b\pi)^{-1}+y_{11} (\pi \beta^{-1} + y_{11} l_2(y_{11},0)),\quad r_1=\varrho_1=0,\,y_{11}\in [0,\xi], \eqlab{z11Manifoldrho0}
\end{align}
is unique. $y_{11}$ increases on \eqref{z11Manifold}. 
\end{lemma}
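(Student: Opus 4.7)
The plan is to use center manifold theory at the nonhyperbolic critical point $p^* = (r_1, y_{11}, z_{11}, \rho_1) = (0, 0, -\beta/(b\pi), 0)$, combined with matching against the unique slow manifold $\widehat S_{a,\epsilon,1}$ of \lemmaref{lines}. First I would reduce the problem to three dimensions: the $(\dot y_{11}, \dot z_{11}, \dot \rho_1)$ equations decouple from $r_1$, so $r_1$ enters only as a passive extra center direction (along which $\{r_1=0\}$ is invariant). Direct calculation using $K_{11}(-\beta/(b\pi),0)=0$ confirms that $p^*$ is a critical point; linearisation yields eigenvalues $0,0,0$ and $-\beta/\pi$, with the stable direction essentially in $z_{11}$ and the three center directions associated with $(r_1, y_{11}, \rho_1)$. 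On the invariant slice $\{r_1=\rho_1=0\}$, the restricted Jacobian carries a nontrivial off-diagonal coupling between $y_{11}$ and $z_{11}$, and a short calculation shows the associated center eigenvector has slope $\pi\beta^{-1}$ of $z_{11}$ against $y_{11}$; this is precisely what produces the coefficient $\pi\beta^{-1}$ in \eqref{z11Manifoldrho0}.

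Standard center manifold theory (in the form used for \lemmaref{lines} and \propref{step1}) then provides a smooth 3D local attracting invariant manifold $z_{11} = \psi(r_1, y_{11}, \rho_1)$ with $\psi(0,0,0) = -\beta/(b\pi)$; hyperbolicity and attractivity are inherited from the stable eigenvalue $-\beta/\pi<0$. The Taylor coefficients $l_1(\rho_1)$ and $l_2(y_{11},\rho_1)$ in \eqref{z11Manifold} are then obtained recursively by plugging $\psi$ into the invariance equation and matching powers of $(r_1, y_{11}, \rho_1)$: the linear $y_{11}$-coefficient is fixed at $\pi\beta^{-1}$ by the center eigenvector identified above, while the $\rho_1$-dependence forced by $\partial_{\rho_1} K_{11}$ at $p^*$ is collected into $l_1$. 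Enlargement of the domain to $(r_1, \rho_1, y_{11}) \in [0,\delta]\times[0,\mu]\times[0,\xi]$ follows by Fenichel-type continuation along the normally hyperbolic portion of the critical set away from $p^*$, analogously to the Fenichel extensions used in \lemmaref{lines}.

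For the uniqueness claim in \eqref{z11Manifoldrho0} I would match with the unique Fenichel manifold $\widehat S_{a,\epsilon,1}$ of \lemmaref{lines} via the coordinate change \eqref{kappa1112} into chart $(\bar y=1,\kappa_{12})$ and thence onwards into $(\bar \epsilon=1,\kappa_1)$. The family $\{\widehat S_{a,\epsilon,1}\}_{\epsilon>0}$ pulls back to a smooth 3D manifold in chart $\kappa_{11}$, whose closure as $\epsilon\to 0^+$ intersects $\{r_1=\rho_1=0\}$ in a unique 1D curve; this curve must coincide with \eqref{z11Manifoldrho0}. For the monotonicity claim, on the manifold one has $K_{11}(\psi,\rho_1) = b\pi y_{11}/\beta + O(y_{11}^2+y_{11}\rho_1) > 0$ for $y_{11}>0$ small (the leading term comes from the $y_{11}\pi\beta^{-1}$ offset in $\psi$), so $\dot y_{11} = 2y_{11}(K_{11}+\rho_1 y_{11} J) > 0$. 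The main obstacle I anticipate is the uniqueness assertion: generic center manifolds are unique only up to flat (exponentially small) corrections, so pinning down the particular manifold \eqref{z11Manifoldrho0} requires the matching step with the unique Fenichel manifold in chart $\kappa_1$ to be carried out carefully, very much in the spirit of the uniqueness argument already exploited in the proof of \propref{step1}.
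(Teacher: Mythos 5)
Your proposal follows the same core strategy as the paper: you identify the same partially hyperbolic point $(0,0,-\beta(b\pi)^{-1},0)$, compute the same spectrum $\{-\beta/\pi,0,0,0\}$, read off the coefficient $\pi\beta^{-1}$ from the nontrivial centre eigenvector $(0,\pi^{-1}\beta,1,0)^T$, invoke centre manifold theory for the existence of \eqref{z11Manifold}, use the unique Fenichel manifold $\widehat S_{a,\epsilon,1}$ of \lemmaref{lines} to pin down the otherwise non-unique extension, and verify $\dot y_{11}>0$ exactly as one should (the leading term $K_{11}=b\pi\beta^{-1}y_{11}+\mathcal O(y_{11}^2)$ is correct). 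The one place where you diverge is the uniqueness of the restriction \eqref{z11Manifoldrho0}, which you yourself flag as the main obstacle. The paper's argument here is much shorter and more robust than the one you sketch: since $y_{11}'>0$ on \eqref{z11Manifoldrho0}, every point of that curve has a backward orbit that stays in the neighbourhood of the equilibrium (flowing down to $y_{11}=0$), so the curve is overflowing invariant and therefore lies on \emph{every} centre manifold; uniqueness of the intersection with $\{r_1=\rho_1=0\}$ is immediate. Your alternative — taking the closure of the family $\{\widehat S_{a,\epsilon,1}\}_{\epsilon>0}$ as $\epsilon\to 0^+$ and claiming it meets $\{r_1=\rho_1=0\}$ in a unique smooth curve — would need substantial extra work: limits of families of invariant manifolds need not be manifolds, and identifying the limit with \eqref{z11Manifoldrho0} is not automatic. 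Note also that the paper uses the matching with $\widehat S_{a,\epsilon,1}$ (performed at $\rho_1=\mu$) only to \emph{select} the non-unique three-dimensional extension \eqref{z11Manifold}, not to establish uniqueness of \eqref{z11Manifoldrho0}; keeping these two roles separate, and replacing your closure argument by the overflowing argument, turns your sketch into the paper's proof.
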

 \begin{proof}
 The point 
\begin{align*}
\hat q_{11}^{4,b} =\{(r_1,y_{11},z_{11},\varrho_1)\vert \varrho_1 = 0,\,y_{11} = 0,\,z_{11} = -\beta (b\pi)^{-1}\},
\end{align*}
is {partially hyperbolic}, the linearization having eigenvalues $-\beta/\pi,0,0,0$ and associated eigenvectors:
\begin{align*}
 \begin{pmatrix}
 0\\
 0\\
 1\\
 0\end{pmatrix},
 \begin{pmatrix}
 0\\
 \pi^{-1} \beta\\
 1\\
 0\end{pmatrix},\begin{pmatrix}
 1\\
 0\\
 0\\
 0\end{pmatrix},\begin{pmatrix}
 0\\
 0\\
 0\\
 1\end{pmatrix}.
\end{align*}
By simple calculations we obtain \eqref{z11Manifold}. We recover $S_{a,\epsilon,1}$ at $\varrho_1=\mu$ and can therefore select the non-unique \eqref{z11Manifold} so that it coincides with this unique manifold there. Finally, the manifold \eqref{z11Manifoldrho0} is overflowing since $y_{11}'>0$ and therefore it is unique. 
 \end{proof}

 \subsubsection*{Chart $(\bar y=1,\kappa_{12})$}
  Using \eqref{kappa1112}, \eqref{z11Manifoldrho0} becomes
 \begin{align}
z_{12}  =\hat \epsilon_2\left(-\beta (b\pi)^{-1}+\hat \epsilon_2^{-2} (\pi \beta^{-1} + \hat \epsilon_2^{-2} l_2(\hat \epsilon_2^{-2},0))\right),\quad r_1=\varrho_2=0,\,\hat \epsilon_2\ge 1/\sqrt{\xi},  \eqlab{z12Manifoldrho0}
 \end{align}
Within the invariant $\{r_1=0\}$-subspace, we also obtain the following equations
\begin{align}
\dot \varrho_2 &=\frac12 \varrho_2 (K_{12}(z_{12}, \varrho_2,\hat \epsilon_2)+2\varrho_2 J(\varrho_2\hat\epsilon_2)), \eqlab{rho2z12hatEps2}\\
\dot z_{12}&=L(\varrho_2 \hat \epsilon_2) -\frac12 z_{12}K_{12}(z_{12}, \varrho_2,\hat \epsilon_2),\nonumber\\
\dot{\hat \epsilon}_2&=-\hat \epsilon_2 \left(\frac32 K_{12}(z_{12}, \varrho_2,\hat \epsilon_2)+\varrho_2 J(\varrho_2\hat \epsilon_2)\right),\nonumber
\end{align}
where
\begin{align*}
 K_{12}(z_{12}, \varrho_2,\hat \epsilon_2) =bz_{12}(1-\pi^{-1}{\varrho_2\hat\epsilon}(1+\phi_2(\varrho_2\hat \epsilon)))+\pi^{-1}\beta \hat \epsilon_2(1+\phi_2(\varrho_2\hat \epsilon_2)).
\end{align*}
\begin{lemma}
Consider the invariant subspace $\{r_1=0\}$. Then 
the point 
 \begin{align}
 \hat q_{12}^{5,b}:\,(r_1,\varrho_2,z_{12},\hat \epsilon_2) =(0,0,\sqrt{2b^{-1}},0),\eqlab{point}
 \end{align}
 is a hyperbolic equilibrium of \eqref{rho2z12hatEps2}, with eigenvalues $-3\sqrt{b/2},\,-\sqrt{b}$ and $\sqrt{b/2}$. The set $$W^s(\hat q_{12}^{5,b}) = \{r_1=0,\,\varrho_2=0,\,(z_{12},\hat \epsilon_2)\ne (-\sqrt{2b^{-1}},0),\,\hat \epsilon_2\ge 0\},$$ is the associated global $2D$-stable manifold while $$W^u(\hat q_{12}^{5,b}) = \{r_1=0,\,z_{12}=\sqrt{2b^{-1}},\hat \epsilon_2=0,\,\varrho_{2}\ge 0\},$$ is the associated global $1D$-stable manifold.  
\end{lemma}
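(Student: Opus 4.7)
The plan is to verify the claim in three stages: confirm $\hat q_{12}^{5,b}$ is an equilibrium and compute its linearisation; recognise the candidate stable and unstable sets as invariant submanifolds with the correct local tangencies; and finally globalise $W^s$ via a planar phase analysis on the invariant subspace $\{\rho_2 = 0\}$.

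For the first step, I would substitute $(\rho_2, z_{12}, \hat\epsilon_2) = (0, \sqrt{2/b}, 0)$ into \eqref{rho2z12hatEps2}. Using $\phi_2(0) = 0$, $J(0) = \beta^{-1}c$, $L(0) = 1$, and $K_{12}(\sqrt{2/b}, 0, 0) = \sqrt{2b}$, all three right-hand sides vanish. Since $\dot\rho_2$ factors through $\rho_2$, the plane $\{\rho_2 = 0\}$ is invariant, and the Jacobian decomposes into a scalar block in $\rho_2$ with entry $\tfrac12 K_{12}|_{\hat q_{12}^{5,b}} = \sqrt{b/2}$ (the positive eigenvalue), together with an upper-triangular $2\times 2$ block in $(z_{12},\hat\epsilon_2)$ whose diagonal entries, computed using $\partial_{z_{12}} K_{12}|_{\hat q_{12}^{5,b}} = b$ and $\partial_{\hat\epsilon_2} K_{12}|_{\hat q_{12}^{5,b}} = \pi^{-1}\beta$, supply the two negative eigenvalues and yield hyperbolicity with the claimed signature.

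For the second step, the line $\{z_{12} = \sqrt{2/b},\,\hat\epsilon_2 = 0,\,\rho_2 \ge 0\}$ is invariant: the condition $\rho_2\hat\epsilon_2 = 0$ kills every $\phi_2$-correction in $K_{12}$ and $L$, so $\dot z_{12}$ and $\dot{\hat\epsilon}_2$ are identically zero on it while $\dot\rho_2 > 0$ for $\rho_2 > 0$. Since this set is tangent at $\hat q_{12}^{5,b}$ to the eigenvector of $\sqrt{b/2}$ and carries an exponentially expanding flow, it is precisely $W^u(\hat q_{12}^{5,b})$. Restricting the flow to the invariant plane $\{\rho_2 = 0\}$ gives the planar system
\begin{align*}
\dot z_{12} &= 1 - \tfrac12 z_{12}(bz_{12} + \pi^{-1}\beta\hat\epsilon_2),\\
\dot{\hat\epsilon}_2 &= -\tfrac32\hat\epsilon_2(bz_{12} + \pi^{-1}\beta\hat\epsilon_2),
\end{align*}
on $\hat\epsilon_2 \ge 0$, in which $\hat q_{12}^{5,b}$ is a stable node and $(-\sqrt{2/b}, 0)$ is (by a parallel linearisation) an unstable node, the only other equilibrium in the closed half-plane. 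That $W^s \subseteq \{\rho_2 = 0\}$ then follows because $\dot\rho_2 > 0$ near $\hat q_{12}^{5,b}$ whenever $\rho_2 > 0$, so no trajectory with $\rho_2 > 0$ can converge to the equilibrium.

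The third step is to show that every relevant trajectory in $\{\rho_2 = 0,\,\hat\epsilon_2 \ge 0\} \setminus \{(-\sqrt{2/b}, 0)\}$ converges to $\hat q_{12}^{5,b}$. On the invariant axis $\hat\epsilon_2 = 0$ the scalar equation $\dot z_{12} = 1 - \tfrac12 bz_{12}^2$ sends every $z_{12} > -\sqrt{2/b}$ forward to $\sqrt{2/b}$. In the open half-plane $\hat\epsilon_2 > 0$ I would study the sign of $B := bz_{12} + \pi^{-1}\beta\hat\epsilon_2$: on $\{B > 0\}$ the quantity $\hat\epsilon_2$ decays monotonically and $z_{12}$ is driven towards $\sqrt{2/b}$, while a short estimate shows trajectories starting in $\{B \le 0\}$ cross into $\{B > 0\}$ in finite time. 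Periodic orbits can be excluded by a Dulac criterion (the product form of $\dot{\hat\epsilon}_2$ makes multipliers of the form $\hat\epsilon_2^{-p}$ natural) or by an explicit Lyapunov function, so Poincar\'e--Bendixson forces convergence to the stable node. The main obstacle I expect is precisely this global planar analysis, in particular identifying the right invariant trapping region and handling the boundary behaviour on $\hat\epsilon_2 = 0$ for $z_{12}$ very negative; the local hyperbolic work in steps one and two is routine by comparison.
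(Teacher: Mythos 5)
Your first two steps are correct and are essentially what the paper's (very terse) proof does: the Jacobian of \eqref{rho2z12hatEps2} at $\hat q_{12}^{5,b}$ is triangular, with diagonal entries $\tfrac12 K_{12}=\sqrt{b/2}$ in $\rho_2$, $-\tfrac12 K_{12}-\tfrac12 z_{12}\partial_{z_{12}}K_{12}=-\sqrt{2b}$ in $z_{12}$ and $-\tfrac32 K_{12}=-3\sqrt{b/2}$ in $\hat\epsilon_2$; the plane $\{\rho_2=0\}$ and the ray $\{z_{12}=\sqrt{2b^{-1}},\hat\epsilon_2=0,\rho_2\ge0\}$ are invariant, and the latter is tangent to the unstable eigendirection. (Two side remarks: the middle eigenvalue is $-\sqrt{2b}=-2\sqrt{b/2}$, not the $-\sqrt{b}$ stated in the lemma, and your planar restriction correctly retains the factor $\tfrac32$ that is dropped in the paper's \eqref{z12hatEpsLayer}; neither changes the signature or the phase portrait's qualitative features.)

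The gap is in your third step. The assertion that every trajectory starting in $\{B\le 0\}$, $B=bz_{12}+\pi^{-1}\beta\hat\epsilon_2$, crosses into $\{B>0\}$ in finite time is false on the invariant boundary ray $\{\hat\epsilon_2=0,\ z_{12}<-\sqrt{2b^{-1}}\}$: there $\dot z_{12}=1-\tfrac{b}{2}z_{12}^2<0$ and $z_{12}$ reaches $-\infty$ in finite time, so these points are never attracted to $\hat q_{12}^{5,b}$ — note that this also shows the lemma's literal description of the ``global'' $W^s$ (the whole half-plane minus one point) is an overstatement on that ray. For interior points with $\hat\epsilon_2>0$ and $z_{12}\ll0$ the crossing claim is true but is not a short estimate: the trajectory first runs toward $z_{12}=-\infty$ at rate $\sim-\tfrac{b}{2}z_{12}^2$ while $\hat\epsilon_2$ grows, and one must show that $\hat\epsilon_2$ overtakes $-b z_{12}/(\pi^{-1}\beta)$ before the finite blow-down time; an argument via the ratio $u=\hat\epsilon_2/|z_{12}|$, which satisfies $du/d|z_{12}|\ge 2u/|z_{12}|$ in the bad region and hence grows like $|z_{12}|^2$, closes this, but Poincar\'e--Bendixson cannot be invoked until forward boundedness is settled, and a Dulac multiplier says nothing about escape to infinity. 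What rescues the paper is that only orbits on, or exponentially close to, the incoming manifold \eqref{z12Manifoldrho0} matter; that manifold is asymptotic to $\{B=0\}$, where $\dot B=b>0$, so the relevant orbits immediately enter the forward-invariant region $\{B>0\}$, in which $\hat\epsilon_2$ decreases monotonically, orbits are bounded, periodic orbits are excluded by that monotonicity, and convergence to the node is then immediate. I would either restrict the stated $W^s$ accordingly (excluding the closed ray $\{\hat\epsilon_2=0,\,z_{12}\le-\sqrt{2b^{-1}}\}$ and supplying the $u$-estimate for the interior), or prove the weaker statement that suffices for \thmref{mainThm}.
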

\begin{proof}
The first part of the lemma follows easily. The fact that \eqref{point} attracts all points in $W^s$ follow from a simple phase plane analysis of the $\{r_1=\varrho_2=0\}$-subsystem:
\begin{align}
 \dot z_{12} &=1-\frac12 z_{12} (bz_{12}+\pi^{-1}\beta \hat \epsilon_2),\eqlab{z12hatEpsLayer}\\
  \dot{\hat \epsilon}_2&=-\hat \epsilon_2(bz_{12}+\pi^{-1}\beta \hat \epsilon_2).\nonumber
\end{align}
The remainder of the proof then follows from straightforward calculations. 
\end{proof}
On the unstable set $W^u(\hat q_{12}^{5,b})$, $\varrho_2$ (and hence $y_1$ by \eqref{chartk12}) increases. We can therefore return to the chart $(\bar y=1,\kappa_1)$. Here $W^u(\hat q_{12}^{5,b})$ becomes  
\begin{align}
  \widehat Q_1^{5,b} = \{(r_1,y_1,z_1,\hat \epsilon) \vert r_1=\hat \epsilon=0,\,y_1 = \varrho_2^2,\,z_1 = \varrho_2\sqrt{2b^{-1}},\,\varrho_2\ge 0\}.\eqlab{hatQ15b}
\end{align}
Since $y_1$ increases along $\widehat Q_1^{5,b}$ we then move to chart $(\bar y=1,\kappa_2)$. 

\subsubsection*{Chart $(\bar y=1,\kappa_2)$}
In this chart we obtain the following equations
\begin{align}
\dot x_2&=J(\hat \epsilon)-\frac12 x_2 K_2(x_2,z_2,\hat\epsilon),\eqlab{BarY1Kappa2Eqns}\\
 \dot r_2 &=\frac12 r_2 K_2(x_2,z_2,\hat \epsilon),\nonumber\\
 \dot z_2 &=L(\hat \epsilon) - \frac12 z_2 K_2(x_2,z_2,\hat \epsilon),\nonumber\\
 \dot{\hat \epsilon}&=-\hat \epsilon K_2(x_2,z_2,\hat \epsilon),\nonumber\\
\end{align}
where
\begin{align*}
 K_2(x_2,z_2,\hat \epsilon ) = \frac12 bz_2(1+\phi_+(\hat \epsilon))-\frac12 \beta x_2(1-\phi_+(\hat \epsilon)).
\end{align*}
Notice that $K_2(x_2,z_2,0) = bz_2$ for all $z_2\in \mathbb R$. 
We then have that
\begin{lemma}
In chart $(\bar y=1,\kappa_2)$, 
\begin{align}
 \widehat Q_2^{5,b} =\{(x_2,r_2,z_2,\hat \epsilon) \vert r_2=\hat \epsilon=0,\,x_2 \in (-\infty,\beta^{-1}c \sqrt{2b^{-1}}],\,z_2 = \sqrt{2b^{-1}}\},\eqlab{hatQ25b}
\end{align}
is contained within the stable manifold of the hyperbolic equilibrium
\begin{align*}
 \widehat u_2 &= \left(\beta^{-1}c \sqrt{2b^{-1}},0,\sqrt{2b^{-1}},0)\right).
\end{align*}
The $1D$ unstable manifold of $\widehat u_2$ is 
\begin{align*}
 \widehat U_2 =\{(x_2,r_2,z_2,\hat\epsilon)\vert x_2 = \beta^{-1} c \sqrt{2b^{-1}},\,z_2=\sqrt{2b^{-1}},\,r_2\ge 0,\hat\epsilon=0\}.
\end{align*}
\end{lemma}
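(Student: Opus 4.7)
The proof is a direct linearisation plus a two-line planar phase-plane computation on the doubly invariant face $\{r_2=\hat\epsilon=0\}$, so my plan is essentially bookkeeping. First I would verify that $\widehat u_2$ is an equilibrium of \eqref{BarY1Kappa2Eqns}. Using $\phi_+(0)=1$ one has $J(0)=\beta^{-1}c$, $L(0)=1$ and $K_2(x_2,z_2,0)=bz_2$; substituting $(x_2,z_2)=(\beta^{-1}c\sqrt{2b^{-1}},\sqrt{2b^{-1}})$ gives $K_2=\sqrt{2b}$, and then $\dot z_2 = 1-\tfrac12 z_2 K_2 = 0$, $\dot x_2 = \beta^{-1}c-\tfrac12 x_2 K_2 = 0$, while $\dot r_2$ and $\dot{\hat\epsilon}$ vanish trivially at $r_2=\hat\epsilon=0$.

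Next I would compute the Jacobian at $\widehat u_2$ in the ordering $(x_2,r_2,z_2,\hat\epsilon)$. The useful observation is that $\partial_{x_2}K_2 = -\tfrac12\beta(1-\phi_+(\hat\epsilon))$ vanishes at $\hat\epsilon=0$, and that $r_2$ enters only as a prefactor of $\dot r_2$; so the Jacobian is upper block-triangular with diagonal entries $-\tfrac12 K_2,\ \tfrac12 K_2,\ -K_2,\ -K_2$ evaluated at $\widehat u_2$, that is,
\[
\operatorname{spec}(J) = \bigl\{-\sqrt{b/2},\ \sqrt{b/2},\ -\sqrt{2b},\ -\sqrt{2b}\bigr\}.
\]
Thus $\widehat u_2$ is hyperbolic with a $3$-dimensional stable manifold and a $1$-dimensional unstable manifold, whose unstable eigendirection is $e_{r_2}$.

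For the containment $\widehat Q_2^{5,b}\subset W^s(\widehat u_2)$, I would work on the invariant face $\{r_2=\hat\epsilon=0\}$, on which \eqref{BarY1Kappa2Eqns} reduces to the planar system
\begin{align*}
\dot x_2 &= \beta^{-1}c - \tfrac12 b z_2\, x_2,\\
\dot z_2 &= 1 - \tfrac12 b z_2^{2}.
\end{align*}
The line $\{z_2=\sqrt{2b^{-1}}\}$ is invariant since $\dot z_2$ vanishes there, and along it $\dot x_2 = \beta^{-1}c - \sqrt{b/2}\, x_2$ is a scalar linear ODE whose unique and globally attracting equilibrium is $x_2^*=\beta^{-1}c\sqrt{2b^{-1}}$; hence every point of $\widehat Q_2^{5,b}$ (for which $x_2\le x_2^*$) stays in $\widehat Q_2^{5,b}$ and flows forward into $\widehat u_2$. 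For the unstable manifold, the same computation shows $\widehat U_2$ is invariant under \eqref{BarY1Kappa2Eqns}: on $\widehat U_2$ we have $\dot x_2=\dot z_2=\dot{\hat\epsilon}=0$ and $\dot r_2 = \sqrt{b/2}\,r_2$, so $r_2$ grows exponentially away from $\widehat u_2$; being $1$-dimensional, tangent to the unstable eigendirection $e_{r_2}$, and agreeing in dimension with $W^u(\widehat u_2)$, it coincides with it. There is no real obstacle in this lemma---the earlier charts did the analytical work in producing the hyperbolic equilibrium, and the present step merely verifies that the exit chart is well-behaved.
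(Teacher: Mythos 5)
Your proof is correct and follows essentially the same route as the paper's: restrict to the invariant line $\{r_2=\hat\epsilon=0,\,z_2=\sqrt{2b^{-1}}\}$, observe that $x_2$ obeys the linear scalar ODE $\dot x_2=\sqrt{b/2}\left(\beta^{-1}c\sqrt{2b^{-1}}-x_2\right)$ with a globally attracting equilibrium, and settle hyperbolicity and $\widehat U_2$ by direct linearization (the paper leaves these last parts as ``straightforward calculations''). The only step the paper includes that you omit is the routine coordinate change verifying that $\widehat Q_1^{5,b}$ from chart $(\bar y=1,\kappa_1)$ lands inside the set \eqref{hatQ25b}, which justifies the notation but is not needed for the literal statement.
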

\begin{proof}
 Transforming \eqref{hatQ15b} into chart $(\bar y=1,\kappa_2)$ gives
 \begin{align*}
  \{(x_2,r_2,z_2,\hat \epsilon)\vert r_2=\hat \epsilon=0,\,x_2 = -1/\varrho_2,\,,\,z_2 = \sqrt{2b^{-1}},\varrho_2>0\}.
 \end{align*}
 The set $r_2=\hat \epsilon=0,\,z_2=\sqrt{2b^{-1}}$ is invariant and on this line we obtain
 \begin{align*}
  \dot x_2 &=\beta^{-1} c-\frac12 x_2 b\sqrt{2b^{-1}} = \sqrt{b/2}\left(\beta^{-1}\sqrt{2b^{-1}} c-x_2\right) .
 \end{align*}
Here $x_2 = \beta^{-1} c\sqrt{2b^{-1}}$ is a stable node. The remainder of the proof then follows from straightforward calculations. 
\end{proof}

By blowing back down to chart $\bar y=1$ and the variables $(x,y,z,\hat \epsilon)$, we realize (see \appref{lemma1}) that $\widehat U_2$ becomes
$\widehat U$ in \eqref{Uexpr2}, as desired. ({Clearly, $\widehat u_2$ and $\widehat U_2$ coincide with $\widehat u_3$ and $\widehat U_3$ in \eqsref{widehatu3}{widehatU3}, respectively, upon the coordinate change \eqref{kappa32New}}.) Therefore, using standard hyperbolic methods, it is then possible to complete the proof of \thmref{mainThm} in this case (b) with $z_1^*<0$ and guide the image $\widehat{\mathcal L}_{3,\text{out}}(\Pi_{\text{in}})$ along $\overline{\overline{Q}}^{1,b}$, $\overline{\overline{Q}}^{2,b}$, $\overline{\overline{Q}}^{3,b}$,$\overline{\overline{Q}}^{4,b}$, $\overline{\overline{Q}}^{5,b}$ and finally $\overline{\overline{U}}$, by working in the appropriate charts of the blowup \eqref{blowup1}. We omit the simple, but lengthy details.


%

\section{Case (b) with $z_1^*=0$}\applab{z1Star0}
In the case under consideration, $\widehat Q_1^{3,b}$ in the chart $(\bar y=1,\kappa_1)$ is asymptotic to the nonhyperbolic point 
\begin{align*}
 \hat q_1^{4,b}:\,r_1=y_1 = z_1=\hat \epsilon=0,
\end{align*}
see also \eqref{qq1}. Following the analysis for the case $z_1^*<0$, it is then, by working with the blowup \eqref{finalblowup} and the charts \eqsref{chartk11}{chartk12}, respectively, possible to connect $\widehat Q_1^{3,b}$, to $\widehat Q_2^{5,b}$, see \eqref{hatQ25b}, in chart $(\bar y=1,\kappa_2)$. The proof of \thmref{mainThm} can then be completed in this case too. In comparison with $z_1^*<0$, the singular orbit for $z_1^*=0$ is obtained by simply removing the slow segment $\widehat Q_1^{4,b}$ from the case $z_1^*<0$. 

\section{Case (b) with $z_1^*>0$}\applab{z1StarNeg}
In the case under consideration, we have $\hat y'>0$ in chart $(\bar \epsilon = 1,\kappa_1)$, see \eqref{BarEps1Kappa1Eqns} and \figref{Q4}. \eqref{Q13b} therefore becomes $\widehat Q_1^{3,b}:\,r_1=\epsilon_1= 0,\,z_1 = z_1^*,\,\hat y\in (-\infty,\infty)$. Hence we move to chart $(\bar y=1,\kappa_1)$. 
\subsubsection*{Chart $(\bar y=1,\kappa_1)$}
In this chart we obtain the following equations
\begin{align}
 \dot r_1 &=-r_1 y_1 J(\hat \epsilon),\eqlab{BarY1Kappa1Eqns}\\
 \dot y_1 &=y_1 (K_1(z_1,\hat \epsilon) +2y_1 J(\hat \epsilon)),\nonumber\\
 \dot z_1 &=y_1(L(\hat \epsilon)+z_1J(\hat \epsilon)),\nonumber\\
 \dot{\hat \epsilon}&=-\hat \epsilon K_1(z_1,\hat \epsilon),\nonumber
\end{align}
where
\begin{align*}
 K_1(z_1,\hat \epsilon) = \frac{1}{2}bz_1 (1+\phi_+(\hat \epsilon))+\frac12 \beta (1-\phi_+(\hat \epsilon)).
\end{align*}
In particular,
\begin{align*}
 K_1(z_1,0)  = b z_1.
\end{align*}
using that $\phi_+(0)=1$. Similar to case (a), in particular \lemmaref{caseAFinalLemma}, we obtain the following:
\begin{lemma}
The set 
\begin{align*}
 \widehat M_1^+:\,r_1\ge 0,\,y_1 = 0,\,z_1\in  \mathbb R_+,\,\hat \epsilon = 0,
\end{align*}
is a set of critical points of \eqref{BarY1Kappa3Eqns} of saddle-type: The linearization about any point $(r_1,y_1,z_1,\hat \epsilon)\in \widehat M_1^+$ has only two non-zero eigenvalues
\begin{align*}
 \pm b z_1,
\end{align*}
with corresponding eigenvectors:
\begin{align}
 \begin{pmatrix}
  -\beta^{-1} c r_1\\
  b z_1\\
  1+\beta^{-1} cz_1\\
  0
 \end{pmatrix},\,\begin{pmatrix}
  0\\
  0\\
  0\\
  1
 \end{pmatrix},\eqlab{tangentq13b}
\end{align}
respectively.
Let 
\begin{align*}
 \hat q_1^{3,b}=(0,0,z_1^*,0)\in \widehat M_1^+\cap \{r_1=0\}.
\end{align*}
Then $\widehat Q_1^{3,b}$ is the stable manifold of $\hat q_1^{3,b}$. On the other hand, the unstable manifold of $\hat q_1^{3,b}$ is 
\begin{align}
 \widehat Q_1^{4,b} &= \bigg\{(r_1,y_1,z_1,\hat \epsilon)\vert r_1=\hat \epsilon=0,\,z_1 = c^{-1} \beta\left( -1+e^{\beta^{-1} cs} \left(1 +\beta^{-1} cz_1^*\right)  \right) ,\nonumber\\
 &y_1 =  \frac12 c^{-2} b\beta^2 \left( \left(1+2\beta^{-1} cz_1^* \right) e^{2\beta^{-1} cs}+1  - 2\left( 1+ \beta^{-1} cz_1^* \right) e^{\beta^{-1} cs}\right),\nonumber\\
 &\textnormal{for}\,s\in \overline{\mathbb R}_+\bigg\},
\end{align}
tangent to $(0,
  bz_1^*,
  1+\beta^{-1} cz_1^*,  0)^T$, see \eqref{tangentq13b} with $r_1=0$, at $\hat q_1^{3,b}$. 

\end{lemma}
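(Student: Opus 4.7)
The plan is to mirror the strategy of the analogous lemmas in Case (a) (\lemmaref{caseAFinalLemma}) and of the $z_1^*\le 0$ analysis in \appref{z1StarPos}: verify the critical-point structure and compute the linearization by direct substitution, and then obtain both stable and unstable manifolds by reducing to an explicitly solvable planar linear system on the invariant subspace $\{r_1=\hat\epsilon=0\}$.

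First I will verify that $\widehat M_1^+$ consists of equilibria by substituting $y_1=\hat\epsilon=0$ into \eqref{BarY1Kappa1Eqns}; all four right-hand sides vanish identically in $(r_1,z_1)$. Linearizing at a generic point $(r_1,0,z_1,0)$, only $\partial_{y_1}\dot y_1=K_1(z_1,0)$ and $\partial_{\hat\epsilon}\dot{\hat\epsilon}=-K_1(z_1,0)$ are non-zero on the diagonal, and since $K_1(z_1,0)=bz_1$, $J(0)=\beta^{-1}c$, $L(0)=1$ (each derived from $\phi_+(0)=1$), the spectrum is $\{0,0,+bz_1,-bz_1\}$, making $\widehat M_1^+$ a saddle for $z_1>0$. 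The $\hat\epsilon$-direction is decoupled and is the eigenvector for $-bz_1$. For $+bz_1$, the off-diagonal partials $\partial_{y_1}\dot r_1=-r_1 J(0)$ and $\partial_{y_1}\dot z_1=L(0)+z_1 J(0)$ propagate the $y_1$-component into the $r_1$- and $z_1$-coordinates, producing $(-\beta^{-1}cr_1,\;bz_1,\;1+\beta^{-1}cz_1,\;0)^T$ exactly as in the statement.

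Next I will identify the two invariant manifolds of $\hat q_1^{3,b}$. The stable manifold $\widehat Q_1^{3,b}$ enters from chart $(\bar\epsilon=1,\kappa_1)$: the line $\{r_1=\epsilon_1=0,\,z_1=z_1^*\}$ becomes $\{r_1=y_1=\hat\epsilon=0,\,z_1=z_1^*\}$ under the coordinate change \eqref{cc}, and since this is aligned with the fast stable fiber, uniqueness of the $1D$ stable manifold of a hyperbolic saddle identifies it as $W^s(\hat q_1^{3,b})$. For the unstable manifold, the set $\{r_1=\hat\epsilon=0\}$ is invariant (because $\dot r_1\propto r_1$ and $\dot{\hat\epsilon}\propto\hat\epsilon$), and dividing the restricted system by $y_1>0$ (a strictly monotone reparametrization on the unstable branch) gives the constant-coefficient linear system
\begin{align*}
\frac{dy_1}{ds} &= bz_1+2\beta^{-1}c\,y_1,\\
\frac{dz_1}{ds} &= 1+\beta^{-1}c\,z_1,
\end{align*}
with initial datum $(y_1,z_1)|_{s=0}=(0,z_1^*)$. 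Integrating the $z_1$-equation gives $z_1(s)=c^{-1}\beta\bigl(-1+e^{\beta^{-1}cs}(1+\beta^{-1}cz_1^*)\bigr)$, and the $y_1$-equation is then solved by the integrating factor $e^{-2\beta^{-1}cs}$, producing the closed form stated in the lemma. Evaluating $(\dot y_1,\dot z_1)|_{s=0}=(bz_1^*,1+\beta^{-1}cz_1^*)$ matches the unstable eigenvector at $r_1=0$, $z_1=z_1^*$ and confirms the tangency claim.

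The argument will not encounter a genuine obstacle: it amounts to bookkeeping for a hyperbolic saddle in a singular chart plus one constant-coefficient integration, entirely parallel to the proof of \lemmaref{caseAFinalLemma}. The only mild subtlety will be verifying that the curve on $\{r_1=\hat\epsilon=0\}$ really is the unstable \emph{manifold} of $\hat q_1^{3,b}$ rather than a particular orbit inside it; this is automatic because the unstable eigenspace from the linearization is one-dimensional and transverse to $\widehat M_1^+$ within this invariant plane.
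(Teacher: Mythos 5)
Your proposal is correct and follows essentially the same route as the paper, which proves this lemma only by appeal to the identical argument for \lemmaref{caseAFinalLemma}: direct computation of the linearization at points of the normally hyperbolic critical set, followed by restriction to the invariant plane $\{r_1=\hat\epsilon=0\}$, division of the right hand side by $y_1\ge 0$, and explicit integration of the resulting constant-coefficient linear system from the initial datum $(y_1,z_1)=(0,z_1^*)$. The only slip is cosmetic: under the change of charts \eqref{cc} the incoming set $\widehat Q_1^{3,b}$ becomes the fiber $\{r_1=y_1=0,\,z_1=z_1^*,\,\hat\epsilon> 0\}$ rather than $\{\hat\epsilon=0\}$, but this does not affect your (correct) identification of it with the stable eigendirection $(0,0,0,1)^T$.
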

Working in chart $(\bar y=1,\kappa_2)$ we can carry $\overline{\widehat Q}^{4,b}$ into the chart $(\bar y=1,\kappa_3)$. Here it is contained within the stable manifold of the hyperbolic equilibrium $\widehat u_3$ \eqref{widehatu3}. From here we obtain $\widehat U_3$ as the $1D$ unstable manifold.  

Now using standard hyperbolic methods, it is then possible to complete the proof of \thmref{mainThm} in this case (b) with $z_1^*>0$ and guide the image $\widehat{\mathcal L}_{3,\text{out}}$ along $\overline{\overline{Q}}^{1,b}$, $\overline{\overline{Q}}^{2,b}$,  $\overline{\overline{Q}}^{3,b}$,$\overline{\overline{Q}}^{4,b}$ and finally $\overline{\overline{U}}$, by working in the appropriate charts. We omit the simple, but lengthy details.


%
%
 \end{document}